\numberwithin{equation}{section}
\newtheorem{Theorem}{Theorem}[section]
\newtheorem{Lemma}{Lemma}[section]
\newtheorem{Assumption-Notation}[Theorem]{Assumption-Notation}
\newtheorem{Proposition}{Proposition}[section]
\newtheorem{Remark}{Remark}[section]
\newtheorem{Corollary}{Corollary}[section]
\newtheorem{lem}{Lemma}
\newtheorem{rem}{Remark}
\long\def\symbolfootnote[#1]#2{\begingroup
\def\thefootnote{\fnsymbol{footnote}}\footnote[#1]{#2}\endgroup}
\begin{document}
\begin{frontmatter}
\title{The distribution of refracted L\'evy processes with jumps having rational Laplace transforms}

 \author{Jiang Zhou\corref{cor1}}
 \ead{1101110056@pku.edu.cn}
 \author{Lan Wu\corref{}}
 \ead{lwu@pku.edu.cn}
 \cortext[cor1]{Corresponding author.}

 %\address{\large Jiang Zhou   \\School of Mathematics Sciences\\Peking University\\Beijing 100871\\P.R.China}
%\email{\large 1101110056@pku.edu.cn }
%\address{\large Lan Wu
% \\School of Mathematics Sciences\\Peking University\\Beijing 100871\\P.R.China}
%\email{\large lwu@pku.edu.cn}
\address{School of Mathematical Sciences, Peking University, Beijing 100871, P.R.China}

\begin{abstract}
{We consider a refracted jump diffusion process having two-sided jumps with rational Laplace transforms.  For such a process, by applying a straightforward but interesting approach, we derive formulas for the Laplace transform of its distribution. Our formulas are presented in an attractive form and the approach is novel. In particular, the idea in the application of an approximating procedure is remarkable. Besides, the results are used to price Variable Annuities with state-dependent fees.
 }
\end{abstract}

\begin{keyword} Refracted L\'evy process; Rational Laplace transform; Wiener-Hopf factorization; Continuity theorem; Variable Annuities; State-dependent fees;
\end{keyword}
\end{frontmatter}

\section{Introduction}
A refracted L\'evy process $U=(U_t)_{t\geq 0}$ is derived from a L\'evy process $X=(X_t)_{t\geq 0}$  and is described by the following equation (see [13]):
\begin{equation}
U_t=X_t-\delta \int_{0}^{t}\textbf{1}_{\{U_s > b\}}ds,
\end{equation}
where $\delta, b \in \mathbb R$, and $\textbf{1}_{A}$ is the indicator function of a set $A$. There are several papers investigating refracted L\'evy processes, where the three papers [13,14,20] are based on the assumption that $X$ in (1.1) has negative jumps only; and in [23], the process $X$ is assumed to be a double-exponential jump diffusion process. Many results, including formulas for occupation times of $U$, have been obtained, and the interested reader is referred to the above papers for the details.
Besides, in [22,24,25], under several different assumptions on $X$, we have considered the following similar process $U^{s}=(U^{s}_t)_{t \geq 0}$:
\begin{equation}
dU^s_t = d X_t - \delta \textbf{1}_{\{U^s_t < b\}}dt.
\end{equation}
For the process $U$ in (1.1) with $X$ given by (2.1) (see below), we will show that $\mathbb P\left(U_t=b\right)=0$ for Lebesgue almost every $t > 0$ (see Remark 4.1), which means that $U_t=X_t- \delta t-(-\delta)\int_{0}^{t}\textbf{1}_{\{U_s < b\}}ds$ and thus the two processes $U^s$ and $U$ are equal essentially.

In this paper, we are interested in the distribution of $U$. When the process $X_t$ in (1.1) is a L\'evy process without positive jumps, the corresponding results can be found in [13]; see Theorem 6 (iv) in that paper. Thus here we focus on the situation that $X$ has both positive and negative jumps. In specific, we assume that $X$ in (1.1) is a jump diffusion process and its jumps have probability density functions whose Laplace transforms are rational functions. Such a L\'evy process is very popular and quite general, and two particular examples of it are
a hyper-exponential jump diffusion process (see [5,6]) and a L\'evy process with phase-type jumps (see [2,19]).
Under the above assumption on $X$, the purpose of this paper is to derive expressions for $\int_{0}^{\infty}e^{-q t} P\left(U_t < y\right)dt$ or in differential form
\begin{equation}
\int_{0}^{\infty}e^{-q t} P\left(U_t \in dy\right)dt,\ \ y \in \mathbb R,
\end{equation}
where $q > 0$. One reason why we are interested in the above quantity is that it is closely related to occupation times of $U$ since
\[q\int_{0}^{\infty}e^{-qt} E\left[\int_{0}^{t}\textbf{1}_{\{U_s<y\}}ds\right]dt=\int_{0}^{\infty}e^{-qt} P(U_t<y)dt, \ \ y \in \mathbb R,
\]
which can be derived by applying integration by parts. This means that the occupation times of $U$, i.e., $\int_{0}^{t}\textbf{1}_{\{U_s< y\}}ds$, can be derived from (1.3).

In [22], under the same assumption on $X$ as in this paper, we have derived formulas for $\int_{0}^{\infty}e^{- q t}P\left(U^s_t  < b\right)dt$, where $U^s$ and $b$ are given by (1.2). In this article, for given $b$ in (1.1), we combine the ideas in [22] with a novel and helpful approximating discussion to calculate $\int_{0}^{\infty}e^{- q t}P\left(U_t  < y\right)dt$, where $y \in \mathbb R$. Particularly, we obtain some attractive and uncommon formulas, which are written in terms of positive and negative Wiener-Hopf factors. These extraordinary expressions are important and are conjectured to hold for a general L\'evy process $X$ and the corresponding solution $U$ to (1.1), providing that such a solution $U$ exists.

Results in this paper have some applications. One application is to price  equity-linked investment products or Variable Annuities with state-dependent fees as in [24]. Such a state-dependent fee charging method is proposed recently and has several advantages (see [4,7]), e.g., it can reduce the incentive for a policyholder to surrender the policy. Equity-linked products are popular life insurance contracts and one reason for their popularity is that they typically provide a guaranteed minimum return. There are many papers studying Equity-linked products and their pricing, see, e.g., [10,15,18].
Investigations on evaluating Equity-linked products under a state-dependent fee structure are relatively new and the  reader is referred to [17] for a recent work.

The remainder of this paper is organized as follows. In Section 2, some notations and some preliminary  results are introduced. Next, we present an important proposition in Section 3 and give the main results in Section 4. Finally, the application of our main results is discussed in Section 5.

\section{Notations and preliminary  results}
In this paper, the process $X=(X_t)_{t \geq 0}$ in (1.1) is a jump diffusion process, where its jumps have rational Laplace transforms. Specifically,
\begin{equation}
  X_t = X_0 + \mu t+\sigma W_t + \sum_{k=1}^{N^+_t}Z^+_k-\sum_{k=1}^{N^-_t}Z^-_k,
\end{equation}
where $X_0$, $\mu$ and $\sigma > 0$ are constants; $(W_t)_{t\geq 0}$ is a standard Brownian motion; $\sum_{k=1}^{N^+_t}Z^+_k$ and
$\sum_{k=1}^{N^-_t}Z^-_k$ are compound Poisson processes with intensity $\lambda^+$ and $\lambda^-$, respectively; and the density functions of $Z_1^+$ and  $Z_1^-$ are given respectively by
\begin{equation}
p^+(z)=\sum_{k=1}^{m^+}\sum_{j=1}^{m_k}c_{kj}\frac{(\eta_k)^jz^{j-1}}{(j-1)!}e^{-\eta_k z}, \ \ z > 0,
\end{equation}
and
\begin{equation}
p^-(z)=\sum_{k=1}^{n^-}\sum_{j=1}^{n_k}d_{kj}\frac{(\vartheta_k)^jz^{j-1}}{(j-1)!}e^{-\vartheta_k z}, \ \ z > 0,
\end{equation}
with $\eta_i \neq \eta_j$ and $\vartheta_i \neq \vartheta_j$ for $i \neq j$;
moreover, $(W_t)_{t\geq 0}$, $\sum_{k=1}^{N^+_t}Z^+_k$ and $\sum_{k=1}^{N^-_t}Z^-_k$ are independent mutually.

\begin{Remark}
Parameters $\eta_k$ and $c_{kj}$ in (2.2) can take complex values as long as $p^+(z)$ satisfies $p^+(z)\geq 0$ and $\int_{0}^{\infty}p^+(z)dz=1$. In addition, if $\eta_1$ has the smallest real part  among $\eta_1$, $\ldots$, $\eta_{m^+}$, then
$
0< \eta_1< Re(\eta_2)\leq \cdots \leq Re(\eta_{m^+}).
$
\end{Remark}

\begin{Remark}
Formula (2.2) is quite general and particularly it contains phase-type distributions. Thus from Proposition $1$ in [2], we know that for any given L\'evy process $X$, there is a sequence of $X^n$ with the form of (2.1) such that
\[
\lim_{n \uparrow \infty} \sup_{s \in [0,t]}|X^n_s-X_s|=0, \ \ almost \ \ surely.
\]
\end{Remark}

In what follows, the law of $X$ starting from $x$ is denoted by $\mathbb P_x$ with $\mathbb E_x$ denoting  the corresponding expectation; when $x = 0$, we write $\mathbb P$ and $\mathbb E$ for convenience. And as usual, for $T \geq 0$, define
\begin{equation}
\underline{X}_{T}:=\inf_{0\leq t \leq T}X_t \ \ and   \ \ \overline{X}_{T}:=\sup_{0\leq t \leq T} X_t.
\end{equation}
Throughout this article, for given $q > 0$, $e(q)$ is an exponential random variable, whose expectation is equal to $\frac{1}{q}$. Besides, $e(q)$ is assumed to be independent of all stochastic processes appeared in the paper. In addition, for a complex value $x$, let $Re(x)$ and $Im(x)$ represent its real part and imaginary part, respectively.

For the L\'evy process $X$ given by (2.1),  it has been shown that equation (1.1) has a unique strong solution $U=(U_t)_{t\geq 0}$ (see, e.g., Theorem 305 of [21]),  which is a strong Markov process (see Remark 3 in [13]). For this unique solution $U$, our objective is deriving the expression of (1.3), i.e.,
\[
\int_{0}^{\infty}e^{-q t} \mathbb P_x\left(U_t \in dy\right)dt,\ \ y \in \mathbb R,
\]
and more importantly, we try to derive some novel expression.

Similar to previous investigations on refracted L\'evy processes (see, e.g., [13]), for given $\delta \in \mathbb R$, we introduce a process $Y$, which is defined as  $Y=\{Y_t=X_t-\delta t;t \geq 0\}$. For the process $Y$, the two quantities $\underline{Y}_T$ and $\overline{Y}_T$ are defined similarly to (2.4).  What is more, we denote by $\hat{\mathbb P}_y$ the law of $Y$ such that $Y_0=y$ and by $\hat{\mathbb E}_y$ the corresponding expectation, and write shortly $\hat{\mathbb P}$ and $\hat{\mathbb E}$ if $y=0$.

The following Lemma 2.1 gives the roots of $\psi(z)=q$ and $\hat{\psi}(z)= q$, where
\begin{equation}
\begin{split}
\psi(z):=
&-\frac{\sigma^2}{2}z^2 +  iz\mu +\lambda^+\left(\sum_{k=1}^{m^+}\sum_{j=1}^{m_k}c_{kj}\left(\frac{\eta_k}{\eta_k-iz}\right)^j-1\right)\\
&+\lambda^-\left(\sum_{k=1}^{n^-}\sum_{j=1}^{n_k}d_{kj}
\left(\frac{\vartheta_k}{\vartheta_k+iz}\right)^j-1\right),
\end{split}
\end{equation}
and $\hat{\psi}(z):= \psi(z) - i\delta z$.
Note that if $z \in \mathbb R$, then $\psi(z):= \ln\left(\mathbb E\left[e^{iz X_1}\right]\right)$ and $\hat{\psi}(z):= \ln\left(\hat{\mathbb E}\left[e^{iz Y_1}\right]\right)$.
Lemma 2.1 has been developed in [16]; see Lemma 1.1 and Theorem 2.1 in that paper (note that $\sigma > 0$ here).

\begin{Lemma}
(i) For $q > 0$, the equation $\psi(z)= q$ $(\hat{\psi}(z)=q)$ has, in the set $Im(z) < 0$, a total of $M^+ (\hat{M}^+)$ distinct solutions $-i\beta_{1}$ $(-i\hat{\beta}_{1})$, $-i\beta_{2}$ $ (-i\hat{\beta}_{2})$, $\ldots$, $-i\beta_{M^+}$ $(-i\hat{\beta}_{\hat{M}^+})$, with respective multiplicities $M_1=1 (\hat{M}_1=1)$, $M_{2}(\hat{M}_2)$, $\ldots$, $M_{M^+}(\hat{M}_{\hat{M}^+})$. Moreover,
\begin{equation}
0 < \beta_{1}< Re(\beta_{2})\leq \cdots \leq Re(\beta_{M^+}),\ \ 0 < \hat{\beta}_{1}< Re(\hat{\beta}_{2})\leq \cdots \leq Re(\hat{\beta}_{\hat{M}^+}),
\end{equation}
and
\begin{equation}
 \sum_{k=1}^{M^+}M_{k}=\sum_{k=1}^{\hat{M}^+}\hat{M}_{k}=1+\sum_{k=1}^{m^+}m_k.
\end{equation}

(ii)
For $q>0$ and $s \geq 0$,
\begin{equation}
\begin{split}
\hat{\mathbb E}\left[e^{-s \overline{Y}_{e(q)}}\right]
&=\prod_{k=1}^{m^+}\left(\frac{s+\eta_k}{\eta_k}\right)^{m_k}
\prod_{k=1}^{\hat{M}^+}\left(\frac{\hat{\beta}_{k}}{s+\hat{\beta}_{k}}\right)^{\hat{M}_k},
\end{split}
\end{equation}
and
\begin{equation}
\begin{split}
\mathbb E\left[e^{-s \overline{X}_{e(q)}}\right]
&=\prod_{k=1}^{m^+}\left(\frac{s+\eta_k}{\eta_k}\right)^{m_k}
\prod_{k=1}^{M^+}\left(\frac{\beta_{k}}{s+\beta_{k}}\right)^{M_{k}}.
\end{split}
\end{equation}
\end{Lemma}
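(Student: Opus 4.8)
Our plan is to establish the two parts separately: (i) is a root‑counting statement that we would settle by the argument principle after clearing denominators, and (ii) we would then read off from (i) together with the uniqueness of the Wiener--Hopf factorization. For (i), put $w=iz$, so that $\{Im(z)<0\}$ corresponds to $\{Re(w)>0\}$, and set $\Phi(w):=\psi(-iw)=\frac{\sigma^2}{2}w^2+\mu w+\lambda^+\big(\sum_{k,j}c_{kj}(\frac{\eta_k}{\eta_k-w})^j-1\big)+\lambda^-\big(\sum_{k,j}d_{kj}(\frac{\vartheta_k}{\vartheta_k+w})^j-1\big)$, noting $e^{\Phi(t)}=\mathbb{E}[e^{tX_1}]$ for real $t$ near $0$. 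Multiplying $\Phi(w)-q$ by $R(w):=\prod_{k=1}^{m^+}(\eta_k-w)^{m_k}\prod_{k=1}^{n^-}(\vartheta_k+w)^{n_k}$ gives a polynomial $P(w):=(\Phi(w)-q)R(w)$ of degree exactly $2+\sum_k m_k+\sum_k n_k$, where the hypothesis $\sigma>0$ is used to keep the leading $\frac{\sigma^2}{2}w^2R(w)$ term. One checks that $P$ has no zero on the imaginary axis: if $Re(w)=0$ then $z=-iw$ is real and $|\mathbb{E}[e^{izX_1}]|<1$ (the Gaussian component makes $X_1$ non‑degenerate), so $Re(\psi(z))<0<q$, while $R(w)\neq0$ there; hence all $2+\sum_k m_k+\sum_k n_k$ roots of $P$ lie off the imaginary axis.

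Next we count the roots with $Re(w)>0$ by the argument principle on the boundary of the half‑disc $\{|w|<\rho,\,Re(w)>0\}$, sending $\rho\to\infty$. On the large semicircle $P(w)$ behaves like a nonzero constant times $w^{2+\sum_k m_k+\sum_k n_k}$, contributing $(2+\sum_k m_k+\sum_k n_k)\pi$ to the variation of the argument; along the imaginary axis we write $\arg P(it)=\arg(\psi(t)-q)+\arg R(it)$, where the first term has zero net variation because $\psi(t)-q$ stays in the open left half‑plane for all real $t$ and tends to $-\infty$ at both ends, while the second varies by exactly $(\sum_k m_k-\sum_k n_k)\pi$ because each factor $\eta_k\mp it$, $\vartheta_k\pm it$ traces a vertical line in the open right half‑plane as $t$ runs over $\mathbb{R}$. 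Summing and dividing by $2\pi$ gives $1+\sum_k m_k$ roots with $Re(w)>0$, which is (2.8). That the smallest‑real‑part root $\beta_1$ is real and simple, and that (2.7) holds, we would get from the log‑convexity of $t\mapsto\mathbb{E}[e^{tX_1}]=e^{\Phi(t)}$: $\Phi$ is convex on its real interval of finiteness, $\Phi(0)=0<q$, and $\Phi(t)\uparrow+\infty$ as $t\uparrow\eta_1$ (the coefficient $c_{1m_1}$ is strictly positive, since $\eta_1$ is the dominant pole—real by Remark 2.1—and $p^+\geq0$), so $\Phi=q$ has a unique simple solution $\beta_1\in(0,\eta_1)$, and a short estimate using $Re(\Phi(\beta))=\ln|\mathbb{E}[e^{\beta X_1}]|\leq\Phi(Re(\beta))$ shows every other root of $\psi(z)=q$ has real part exceeding $\beta_1$. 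Everything is word‑for‑word the same for $\hat\psi(z)=\psi(z)-i\delta z$, whose poles remain $\{\eta_k\},\{\vartheta_k\}$ and whose quadratic leading term is unchanged.

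For (ii) we would invoke the standard Wiener--Hopf factorization $\frac{q}{q-\psi(z)}=\mathbb{E}[e^{iz\overline{X}_{e(q)}}]\,\mathbb{E}[e^{iz\underline{X}_{e(q)}}]$ for $z\in\mathbb{R}$, in which the first factor extends to a function analytic and zero‑free on the closed upper half‑plane with value $1$ at $z=0$, and the second likewise on the closed lower half‑plane. By (i) and the explicit form of $\psi$, the left‑hand side equals $-qR(w)/P(w)$ with $w=iz$, a rational function whose zeros are exactly $-i\eta_k$ (order $m_k$) and $i\vartheta_k$ (order $n_k$) and whose poles are exactly the roots of $\psi(z)=q$, namely $-i\beta_k$ (order $M_k$) in the open lower half‑plane together with the mirror set in the open upper half‑plane. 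A zero or pole of the product lying in the open lower half‑plane cannot be carried by the second factor (analytic and zero‑free there), so it must be carried, with the same order, by the first; together with the analyticity and zero‑freeness of the first factor on the closed upper half‑plane and its finiteness and non‑vanishing on $\mathbb{R}$, this forces $\mathbb{E}[e^{iz\overline{X}_{e(q)}}]=C\prod_k(z+i\eta_k)^{m_k}/\prod_k(z+i\beta_k)^{M_k}$. Substituting $s=-iz$ and imposing the value $1$ at $s=0$ fixes $C$ and turns the right‑hand side into $\prod_{k=1}^{m^+}(\frac{s+\eta_k}{\eta_k})^{m_k}\prod_{k=1}^{M^+}(\frac{\beta_k}{s+\beta_k})^{M_k}$, which is (2.10); replacing $X,\psi,\beta_k,M_k$ by $Y,\hat\psi,\hat\beta_k,\hat M_k$ gives (2.9).

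The main obstacle is part (i). Making the argument‑principle count rigorous—ruling out roots escaping to infinity along the semicircle and computing the net variation of $\arg P(it)$ exactly—is routine but must be done with care. More delicate is pinning down the dominant root $\beta_1$: its reality, simplicity and minimality (hence (2.7)) rest on the genuine nonnegativity of $p^+$, not merely on $p^+$ being a probability density, since positivity is what forces the dominant pole $\eta_1$ to be real and the coefficient $c_{1m_1}$ to be positive, so that $\Phi$ diverges to $+\infty$—rather than to $-\infty$—as $t\uparrow\eta_1$.
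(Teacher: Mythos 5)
The paper does not prove this lemma: the authors state that it ``has been developed in [16]'' (Lewis and Mordecki, 2008, Lemma 1.1 and Theorem 2.1), so there is no in-paper argument against which to compare yours. Your sketch is a self-contained reconstruction along exactly the standard lines of that reference: clear denominators to form the polynomial $P(w)=(\Phi(w)-q)R(w)$, count its roots in $\{Re(w)>0\}$ by the argument principle, and then recover the transform of $\overline{X}_{e(q)}$ by matching zeros and poles of $q/(q-\psi(z))$ against the two Wiener--Hopf factors. The contour count is right, and the role of $\sigma>0$ is correctly identified twice over (leading degree $2+\sum m_k+\sum n_k$, and $Re(\psi(t))<0$ for real $t\neq0$, so the boundary is zero-free and $\arg(\psi(t)-q)$ has zero net variation). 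The log-convexity argument for $\beta_1$ and the inequality $Re(\Phi(\beta))\leq\Phi(Re(\beta))$ are also the right tools. Two points to tighten if this were written out in full. First, for the strict separation $\beta_1<Re(\beta_2)$ you must also exclude a non-real root with $Re(\beta)=\beta_1$; this follows because equality in $|\mathbb{E}[e^{\beta X_1}]|\leq\mathbb{E}[e^{Re(\beta)X_1}]$ forces $e^{iIm(\beta)X_1}$ to be a.s.\ constant under the tilted law, impossible when $\sigma>0$. Second, the pole/zero-matching step tacitly uses that each Wiener--Hopf factor is not merely analytic but \emph{zero-free} on its closed half-plane; that is a correct but nontrivial fact (from the Frullani/exponential representation of the factors, cf.\ Theorem 6.16 of [12]) and should be cited rather than assumed. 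With those two remarks added, the proof is complete and is essentially the argument in the source the paper cites.
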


Next, consider a function $F_1(x)$ on $(0,\infty)$ with the Laplace transform
\begin{equation}
\begin{split}
&\int_{0}^{\infty} e^{-s x}F_1(x)dx
=\frac{1}{s}\left(\frac{\hat{\mathbb E}\left[e^{-s \overline{Y}_{e(q)}}\right]}
{\mathbb E\left[e^{-s \overline{X}_{e(q)}}\right]}-1\right), \ \ s > 0.
\end{split}
\end{equation}
It follows from (2.8) and (2.9) that the Laplace transform of $F_1(x)$ is a rational function, which means that $F_1(x)$ is continuously differentiable on $(0,\infty)$. Since $\hat{M}_1=1$ and (2.6) holds, it can be shown that
\begin{equation}
\lim_{x \uparrow \infty}\frac{F_1(x)}{e^{-\hat{\beta}_1 x}}=-\prod_{k=2}^{\hat{M}^+}\left(\frac{\hat{\beta}_{k}}{\hat{\beta}_{k}-\hat{\beta}_1}\right)^{\hat{M}_k}
\prod_{k=1}^{M^+}\left(\frac{\beta_{k}-\hat{\beta}_1}{\beta_{k}}\right)^{M_{k}}.
\end{equation}
In addition, it holds that
\begin{equation}
\begin{split}
F_1(0):=\lim_{x \downarrow 0} F_1(x)=\lim_{s\uparrow \infty}\int_{0}^{\infty} s e^{-s x}F_1(x)dx
=\frac{\prod_{k=1}^{\hat{M}^+}\left(\hat{\beta}_{k}\right)^{\hat{M}_k}}
{\prod_{k=1}^{M^+}\left(\beta_{k}\right)^{M_{k}}}-1.
\end{split}
\end{equation}

\begin{Remark}
The expression of $F_1(x)$ can be obtained easily from (2.8)--(2.10) by using rational expansion, but it is not important in this paper and thus is omitted for brevity.
\end{Remark}

\begin{Remark}
Due to (2.11), $F_1(x)$ is absolutely integrable and the Laplace transform of $F_1(x)$ in (2.10) can be extended  analytically to the half-plane $Re(s) \geq 0$. When $s=0$, the right-hand side of (2.10) is understood as
\[
\lim_{s \downarrow 0}\frac{1}{s}\left(\frac{\hat{\mathbb E}\left[e^{-s \overline{Y}_{e(q)}}\right]}
{\mathbb E\left[e^{-s \overline{X}_{e(q)}}\right]}-1\right)=\frac{\partial}{\partial s}\left(
\prod_{k=1}^{\hat{M}^+}\left(\frac{\hat{\beta}_{k}}{s+\hat{\beta}_k}\right)^{\hat{M}_k}
\prod_{k=1}^{M^+}\left(\frac{s+\beta_{k}}{\beta_k}\right)^{M_{k}}\right)_{s=0}.
\]
Besides, $F_1(x)$ is bounded on $[0,\infty]$ with $F_1(\infty):=\lim_{x\uparrow \infty} F_1(x)=0$.
\end{Remark}

\begin{Lemma}
For the continuous function $F_1(x)$ given by (2.10), it holds that
\begin{equation}
F_1(x)+1 \geq 0, \ \ for \ \  x > 0.
\end{equation}
\end{Lemma}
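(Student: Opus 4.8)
The plan is to identify $F_1(x)+1$ with (the density of) an explicit nonnegative measure. Since the Laplace transform of the constant function $1$ on $(0,\infty)$ is $1/s$, adding $1$ to both sides of (2.10) gives, for $s>0$,
\[
\int_{0}^{\infty} e^{-sx}\bigl(F_1(x)+1\bigr)\,dx=\frac{1}{s}\cdot\frac{\hat{\mathbb E}\bigl[e^{-s\overline{Y}_{e(q)}}\bigr]}{\mathbb E\bigl[e^{-s\overline{X}_{e(q)}}\bigr]}.
\]
Because $F_1$ is continuous on $(0,\infty)$ (as noted after (2.10)), it suffices to write the right-hand side as the Laplace transform of a nonnegative Radon measure on $[0,\infty)$; uniqueness of Laplace transforms then forces $\bigl(F_1(x)+1\bigr)\,dx$ to equal that measure, whence $F_1(x)+1\ge 0$ for $x>0$.

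The one genuinely non-trivial ingredient is the following, which is exactly the content of the Wiener--Hopf factorization: the map $\beta\mapsto\mathbb E[e^{-\beta\overline{X}_{e(q)}}]^{-1}$ is, up to a strictly positive multiplicative constant $\kappa_0$, the Laplace exponent of a killed subordinator --- namely the ascending ladder height process of $X$ --- so that
\[
\frac{\kappa_0}{\mathbb E\bigl[e^{-s\overline{X}_{e(q)}}\bigr]}=\kappa_0+\mathtt{d}\,s+\int_{0}^{\infty}\bigl(1-e^{-sx}\bigr)\,\Upsilon(dx),
\]
with $\mathtt{d}\ge 0$ and $\Upsilon$ a nonnegative measure on $(0,\infty)$ satisfying $\int_{0}^{\infty}(1\wedge x)\,\Upsilon(dx)<\infty$, and with $\kappa_0>0$ because $\overline{X}_{e(q)}<\infty$ almost surely for $q>0$. (One can also read this off the explicit rational expression (2.9): its reciprocal is a rational function of $s$ growing linearly at infinity, which is precisely the above L\'evy--Khintchine form.) Dividing by $s$ and using $s^{-1}\int_0^\infty(1-e^{-sx})\,\Upsilon(dx)=\int_0^\infty e^{-sy}\,\Upsilon((y,\infty))\,dy$, I obtain
\[
\frac{1}{s\,\mathbb E\bigl[e^{-s\overline{X}_{e(q)}}\bigr]}=\int_{0}^{\infty}e^{-sy}\,\rho(dy),\qquad \rho(dy):=\mathbf{1}_{(0,\infty)}(y)\,dy+\frac{\mathtt{d}}{\kappa_0}\,\delta_{0}(dy)+\frac{1}{\kappa_0}\,\Upsilon\bigl((y,\infty)\bigr)\,dy,
\]
and $\rho$ is a nonnegative Radon measure on $[0,\infty)$ (local finiteness near $0$ follows from $\int_0^1\Upsilon((y,\infty))\,dy=\int(x\wedge1)\,\Upsilon(dx)<\infty$).

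Finally, $\hat{\mathbb E}[e^{-s\overline{Y}_{e(q)}}]=\int_0^\infty e^{-sx}\,\mu_{Y}(dx)$, where $\mu_{Y}$ denotes the law of the nonnegative random variable $\overline{Y}_{e(q)}$, a probability measure. Multiplying the two Laplace transforms,
\[
\frac{1}{s}\cdot\frac{\hat{\mathbb E}\bigl[e^{-s\overline{Y}_{e(q)}}\bigr]}{\mathbb E\bigl[e^{-s\overline{X}_{e(q)}}\bigr]}=\Bigl(\int_{0}^{\infty}e^{-sx}\,\mu_{Y}(dx)\Bigr)\Bigl(\int_{0}^{\infty}e^{-sy}\,\rho(dy)\Bigr)=\int_{0}^{\infty}e^{-sz}\,(\mu_{Y}*\rho)(dz),
\]
and $\mu_{Y}*\rho$ is a nonnegative measure; it is absolutely continuous (the $\mathbf{1}_{(0,\infty)}\,dy$ part of $\rho$ already smooths $\mu_{Y}$), consistent with $F_1+1$ being continuous. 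Comparing with the first display and invoking uniqueness of Laplace transforms yields $F_1(x)+1\ge 0$ for $x>0$, which is the assertion.

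The hard part is thus concentrated in the single step above --- the nonnegativity of the drift $\mathtt{d}$ and the measure $\Upsilon$ in the Laplace-exponent representation of $1/\mathbb E[e^{-s\overline{X}_{e(q)}}]$ --- and everything else is routine manipulation of Laplace transforms. Two remarks. First, the argument never uses the sign of $\delta$, only that $Y=X-\delta t$ is again a L\'evy process and that $\overline{X}_{e(q)}$, $\overline{Y}_{e(q)}$ have Wiener--Hopf transforms of the form in Lemma 2.1 --- in line with the paper's expectation that such formulas persist for general driving L\'evy processes. Second, if one prefers to avoid invoking the abstract ladder-height theory, one can establish the claim first in the hyper-exponential case (all $\eta_k,\vartheta_k$ real, all multiplicities equal to one), where the Bernstein form of $\mathbb E[e^{-s\overline{X}_{e(q)}}]^{-1}$ --- with $\Upsilon$ a finite mixture of real exponentials --- is transparent from partial fractions, and then pass to the general rational case by the approximation/continuity-theorem device used elsewhere in the paper.
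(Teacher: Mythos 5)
Your proof is correct, and its architecture matches the paper's: show that $s^{-1}\hat{\mathbb E}[e^{-s\overline Y_{e(q)}}]/\mathbb E[e^{-s\overline X_{e(q)}}]$ is the Laplace transform of a nonnegative measure on $[0,\infty)$, then conclude $F_1+1\ge 0$ by uniqueness and continuity. The divergence is in the key nonnegativity step for $1/(s\,\mathbb E[e^{-s\overline X_{e(q)}}])$: you invoke the ascending ladder height Laplace exponent $\kappa(q,s)=\kappa_0+\mathtt d\,s+\int_0^\infty(1-e^{-sx})\,\Upsilon(dx)$ with nonnegative drift $\mathtt d$ and L\'evy measure $\Upsilon$, so that $\kappa_0/\mathbb E[e^{-s\overline X_{e(q)}}]=\kappa(q,s)$, divide by $s$, and convolve with the law of $\overline Y_{e(q)}$. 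The paper instead starts from the explicit first-passage identity (formula (4) of Alili--Kyprianou)
\[
\mathbb E\Bigl[e^{-s(\overline X_{e(q)}-h)}\mathbf 1_{\{\overline X_{e(q)}>h\}}\Bigr]
=\mathbb E\Bigl[e^{-q\tau_h^+-s(X_{\tau_h^+}-h)}\Bigr]\,\mathbb E\bigl[e^{-s\overline X_{e(q)}}\bigr],\qquad h>0,
\]
integrates over $h$ to obtain $\tfrac1s\bigl(1/\mathbb E[e^{-s\overline X_{e(q)}}]-1\bigr)=\int_0^\infty \mathbb E\bigl[e^{-q\tau_h^+-s(X_{\tau_h^+}-h)}\bigr]\,dh$, which is manifestly completely monotone in $s$, and then concludes via Bernstein's theorem (Feller, Thm.~1a, p.~439) applied to the product with $\hat{\mathbb E}[e^{-s\overline Y_{e(q)}}]$. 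The two routes express the same underlying fact --- the first-passage overshoot kernel is precisely what generates the ladder drift $\mathtt d$ and L\'evy measure $\Upsilon$ --- but the paper's version trades the abstract subordinator calculus for a single concrete identity, which is a touch more self-contained. Your closing observations are also correct: neither argument uses the sign of $\delta$ nor the rationality of the jump transforms, so the hyper-exponential-plus-approximation fallback you propose is not needed for this lemma.
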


\begin{proof}
For $q, s > 0$, we know (see, e.g., formula (4) in [1])
\begin{equation}
\mathbb E\left[e^{-s(\overline{X}_{e(q)}-h)}\textbf{1}_{\{\overline{X}_{e(q)}>h\}}\right]=
\mathbb E\left[e^{-q\tau_{h}^+-s(X_{\tau_h^+}-h)}\right]\mathbb E\left[e^{-sX_{e(q)}}\right], \ \ h > 0,
\end{equation}
where $\tau_h^+:=\inf\{t\geq 0: X_t>h\}$. Exchanging the order of integration yields

\[
\begin{split}
\int_{0}^{\infty}\mathbb E\left[e^{-s(\overline{X}_{e(q)}-h)}\textbf{1}_{\{\overline{X}_{e(q)}>h\}}\right]dh
&=\int_{0}^{\infty}
\int_{0}^{x}e^{-s(x-h)}dh\mathbb P\left(\overline{X}_{e(q)} \in dx\right)\\
&=\frac{1}{s}\left(1-\mathbb E\left[e^{-s \overline{X}_{e(q)}}\right]\right).
\end{split}
\]
Then, on both sides of (2.14), integrating with respect to $h$ from $0$ to $\infty$ gives
\begin{equation}
\int_{0}^{\infty} \mathbb E\left[e^{-q\tau_{h}^+-s(X_{\tau_h^+}-h)}\right]dh=\frac{1}{s}
\left(\frac{1}{\mathbb E\left[e^{-s \overline{X}_{e(q)}}\right]}-1\right).
\end{equation}
Formulas (2.10) and (2.15) mean that $\int_{0}^{\infty} e^{-s x}(F_1(x)+1)dx$ is a completely monotone function of $s$ on $(0,\infty)$, then (2.13) follows from Theorem 1a on page 439 in [8].
\end{proof}

The following Lemma 2.3 is taken from Proposition 1 (v) in [11], which states that for almost all $q>0$, $\psi(z) = q$ and $\hat{\psi}(z)= q$ only have simple solutions. Based on this lemma, we apply an
approximating argument (which reduces the calculation in a large extent) to derive the final results.

\begin{Lemma}
There exists  only finite numbers of $q > 0$ such that $\psi(z)= q$ or $\hat{\psi}(z)= q$ has solutions of  multiplicity greater than one.
\end{Lemma}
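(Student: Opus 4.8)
The plan is to recognize that $\psi(z)=q$ and $\hat\psi(z)=q$ are, after clearing denominators, \emph{polynomial} equations in $z$ whose coefficients depend polynomially (in fact affinely, through $q$) on the parameter $q$, and then to invoke the standard fact that a one-parameter family of polynomials has a repeated root only for finitely many parameter values, namely the zeros of the discriminant. Concretely, multiplying $\psi(z)-q$ by $\prod_{k=1}^{m^+}(\eta_k-iz)^{m_k}\prod_{k=1}^{n^-}(\vartheta_k+iz)^{n_k}$ produces a polynomial $P(z,q)$ in $z$; by Lemma 2.1(i) (specifically~(2.7)) its degree in $z$ is the fixed number $1+\sum_k m_k+\bigl(\text{analogous negative-side count}\bigr)$, independent of $q$, and its leading coefficient is $-\sigma^2/2\neq 0$, so the degree never drops. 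The same construction applies verbatim to $\hat\psi(z)-q$, since $\hat\psi(z)=\psi(z)-i\delta z$ only changes the linear term.

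First I would write $D(q):=\operatorname{disc}_z P(z,q)$ for the discriminant of $P(\cdot,q)$, which is a polynomial in the coefficients of $P(z,q)$ and hence a polynomial in $q$; likewise $\hat D(q):=\operatorname{disc}_z \hat P(z,q)$. A classical property of the discriminant is that $D(q)=0$ if and only if $P(\cdot,q)$ has a multiple root in $\CC$ (using that the leading coefficient is the nonzero constant $-\sigma^2/2$, so no spurious vanishing occurs). Thus the set of ``bad'' values of $q$ on the $\psi$-side is exactly $\{q>0:D(q)=0\}$, and on the $\hat\psi$-side exactly $\{q>0:\hat D(q)=0\}$; the bad set in the lemma is the union of these two. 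So it remains only to check that neither $D$ nor $\hat D$ is the zero polynomial, for then each has finitely many roots and we are done.

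Second, to see $D\not\equiv 0$ it suffices to exhibit a single $q$ for which $P(\cdot,q)$ has only simple roots; equivalently, a single $q$ for which $\psi(z)=q$ has only simple solutions. This is precisely the content of Lemma 2.3 as quoted from Proposition~1(v) of~[11] --- indeed that proposition already asserts the stronger ``for almost every $q$'' statement --- but if one wants a self-contained argument one can argue directly: a multiple root $z_0$ of $P(\cdot,q)$ with $z_0$ not a pole of $\psi$ forces $\psi(z_0)=q$ and $\psi'(z_0)=0$ simultaneously, and $\psi'$ has only finitely many zeros $z_0^{(1)},\dots,z_0^{(N)}$ in the region away from its poles (being the derivative of a nonconstant rational function), so the only candidate bad values are $q=\psi(z_0^{(j)})$, a finite list; one then separately checks the finitely many values of $z_0$ that \emph{are} poles of $\psi$, each again contributing at most one value of $q$. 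The identical reasoning with $\hat\psi,\hat P,\hat D$ handles the refracted case. Taking the union of these finitely many values across both equations yields the lemma.

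I expect the only real subtlety to be bookkeeping around the poles of $\psi$ and $\hat\psi$: a factor $(\eta_k-iz)$ could a priori be cancelled when forming $P(z,q)$ in a way that interacts with multiplicities, and one must be sure that ``multiplicity greater than one as a solution of $\psi(z)=q$'' matches ``multiple root of $P(z,q)$'' --- this is where the hypotheses $\eta_i\neq\eta_j$, $\vartheta_i\neq\vartheta_j$ and $\sigma>0$ are used, guaranteeing the clearing-of-denominators step neither lowers the degree nor introduces a root at a pole of $\psi$ for generic $q$. Everything else is the routine invocation of the discriminant criterion plus the already-cited Lemma 2.3; no genuinely hard estimate is needed.
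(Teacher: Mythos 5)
The paper does not actually prove Lemma 2.3: it simply attributes the statement to Proposition 1(v) of Kuznetsov~[11] and moves on. Your proposal, by contrast, supplies a self-contained argument, which is a genuine (and welcome) difference rather than a reproduction of the paper's reasoning. The mathematics is sound: after clearing denominators, $\psi(z)-q$ and $\hat\psi(z)-q$ become polynomials $P(z,q)$, $\hat P(z,q)$ of fixed degree $2+\sum_k m_k+\sum_k n_k$ with constant (hence nonvanishing) leading coefficient proportional to $-\sigma^2/2$, and a multiple solution of $\psi(z)=q$ away from the poles of $\psi$ forces $\psi'(z_0)=0$. Since $\psi'$ is a nonconstant rational function (the $-\sigma^2 z$ term guarantees nonconstancy), it has only finitely many zeros, each contributing at most one bad value $q=\psi(z_0)$; the same reasoning applies verbatim to $\hat\psi$.

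Two small remarks on tightening the writeup. First, the discriminant scaffolding is not needed: your ``direct'' argument in the second paragraph is already a complete proof, so invoking $\operatorname{disc}_z P(z,q)$ and worrying about whether $D(q)\equiv 0$ is a detour (and, as you yourself noticed, your first suggested way of ruling out $D\equiv 0$ by citing Lemma 2.3 would be circular; you correctly supplant it with the $\psi'$ argument, so no harm done, but the exposition would be cleaner if you led with the $\psi'$ argument and dropped the discriminant entirely). Second, the bookkeeping around poles of $\psi$ is vacuous: a pole $z_0$ of $\psi$ is never a solution of $\psi(z)=q$ for finite $q$, and under the standing normalization $c_{k,m_k}\neq 0$ (respectively $d_{k,n_k}\neq 0$) the cleared numerator $P(z,q)$ does not vanish at those points either, so the poles contribute no bad $q$ at all, rather than ``at most one each.'' Neither issue affects correctness; both would shorten and clarify the proof. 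Overall this is a valid replacement for the paper's citation of~[11].
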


In the following, let $\mathbb S$ be the set of $q > 0$ such that all the roots of $\psi(z) = q$ and $\hat{\psi}(z) = q$ are simple.

\begin{Remark}
For $q \in \mathbb S$, Lemma 2.1 gives $M^+=\hat{M}^+=1+\sum_{k=1}^{m^+}m_k$.
\end{Remark}

We can obtain Lemma 2.4 by applying Lemma 2.1 to the dual processes $-X_t$ and $-Y_t$.

\begin{Lemma}
(i)  For $q \in \mathbb S$, the equation $\psi(z) = q$ $(\hat{\psi}(z) = q$) has, in the set $Im(z) > 0$, a total of $N^-$ $(\hat{N}^-)= \sum_{k=1}^{n^-}n_k+1$ distinct simple roots $i\gamma_{1}$
$(i\hat{\gamma}_{1}$),
$i\gamma_{2}$ $(i\hat{\gamma}_{2}$), $\ldots$, $i\gamma_{N^-}$ $(i\hat{\gamma}_{\hat{N}^-}$), ordered such that
\begin{equation}
0< \gamma_{1}< Re(\gamma_{2})\leq \cdots \leq Re(\gamma_{N^-}), \ \ 0< \hat{\gamma}_{1}< Re(\hat{\gamma}_{2})\leq \cdots \leq Re(\hat{\gamma}_{\hat{N}^-}).
\end{equation}
(ii) For $q \in \mathbb S$ and $Re(s) \geq 0$, we have
\begin{equation}
\begin{split}
&\mathbb E\left[e^{s\underline{X}_{e(q)}}\right]
=\prod_{k=1}^{n^-}\left(\frac{s+\vartheta_k}{\vartheta_k}\right)^{n_k}
\prod_{k=1}^{N^-}\left(\frac{\gamma_{k}}{s+\gamma_{k}}\right),\\
&\hat{\mathbb E}\left[e^{s\underline{Y}_{e(q)}}\right]
=\prod_{k=1}^{n^-}\left(\frac{s+\vartheta_k}{\vartheta_k}\right)^{n_k}
\prod_{k=1}^{\hat{N}^-}\left(\frac{\hat{\gamma}_{k}}{s+\hat{\gamma}_{k}}\right).
\end{split}
\end{equation}
\end{Lemma}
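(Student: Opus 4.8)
The plan is to deduce Lemma 2.4 from Lemma 2.1 together with the hypothesis $q \in \mathbb S$, via the duality $X \leftrightarrow -X$, $Y \leftrightarrow -Y$. Taking $X_0 = 0$ as usual, one has
\[
-X_t = -\mu t + \sigma(-W_t) + \sum_{k=1}^{N^-_t}Z^-_k - \sum_{k=1}^{N^+_t}Z^+_k, \qquad -Y_t = -X_t + \delta t .
\]
Since $-W$ is again a standard Brownian motion and the three summands remain mutually independent, both $-X$ and $-Y$ are once more processes of the form (2.1): the diffusion coefficient $\sigma > 0$ is unchanged, the ``positive'' jumps of each are distributed as $Z^-_1$ (density $p^-$, with the pairwise distinct exponents $\vartheta_1,\dots,\vartheta_{n^-}$ of multiplicities $n_1,\dots,n_{n^-}$), the ``negative'' jumps as $Z^+_1$, and only the drifts differ ($-\mu$ for $-X$, $\delta-\mu$ for $-Y$). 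A direct computation from (2.5) gives $\psi_{-X}(z) = \psi(-z)$ and $\psi_{-Y}(z) = \hat\psi(-z)$, where $\psi_{-X},\psi_{-Y}$ denote the exponents of $-X,-Y$; consequently $z$ with $Im(z) < 0$ is a root of $\psi_{-X}(\cdot) = q$ (resp. of $\psi_{-Y}(\cdot) = q$), with a given multiplicity, if and only if $-z$, which satisfies $Im(-z) > 0$, is a root of $\psi(\cdot) = q$ (resp. of $\hat\psi(\cdot) = q$) of the same multiplicity.

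For part (i) I would apply Lemma 2.1(i) to the process $-X$. Its ``positive''-jump exponents are $\vartheta_1,\dots,\vartheta_{n^-}$ with total multiplicity $\sum_{k=1}^{n^-}n_k$, so by (2.7) (read for $-X$) the roots of $\psi_{-X}(\cdot) = q$ in $Im(z) < 0$, counted with multiplicity, total $1 + \sum_{k=1}^{n^-}n_k$; since $q \in \mathbb S$ they are all simple, so there are exactly $1 + \sum_{k=1}^{n^-}n_k$ of them. Transporting them through $z \mapsto -z$ produces the asserted $N^- = 1 + \sum_{k=1}^{n^-}n_k$ distinct simple roots $i\gamma_1,\dots,i\gamma_{N^-}$ of $\psi(\cdot) = q$ in $Im(z) > 0$, and the ordering (2.6) for $-X$ turns into the ordering (2.16). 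The identical argument with $-Y$ in place of $-X$ gives $\hat N^- = 1 + \sum_{k=1}^{n^-}n_k$ and the corresponding ordering for the $\hat\gamma_k$.

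For part (ii) I would combine the elementary identities $\overline{(-X)}_{T} = -\underline X_{T}$ and $\overline{(-Y)}_{T} = -\underline Y_{T}$ (where $\overline{(-X)},\overline{(-Y)}$ are the running suprema of $-X,-Y$) with Lemma 2.1(ii), formula (2.9), which holds for any process of the form (2.1). Applied to $-X$ --- whose ``$\eta$''-parameters are the $\vartheta_k$ (multiplicities $n_k$) and whose ``$\beta$''-parameters are the $\gamma_k$ (all of multiplicity one by part (i)) --- it yields
\[
\mathbb E\!\left[e^{s\underline X_{e(q)}}\right] = \mathbb E\!\left[e^{-s\overline{(-X)}_{e(q)}}\right] = \prod_{k=1}^{n^-}\left(\frac{s+\vartheta_k}{\vartheta_k}\right)^{n_k}\prod_{k=1}^{N^-}\frac{\gamma_k}{s+\gamma_k},
\]
and applied to $-Y$ --- whose law, started at $0$, is the path-negation of $Y$ under $\hat{\mathbb P}$, so that the relevant expectation is $\hat{\mathbb E}$ --- it yields the companion formula for $\hat{\mathbb E}[e^{s\underline Y_{e(q)}}]$. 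This proves (2.17) for real $s \ge 0$. To extend the two identities to all $s$ with $Re(s) \ge 0$ I would invoke the identity theorem: each left-hand side is analytic on $\{Re(s) > 0\}$ and continuous on $\{Re(s) \ge 0\}$, since $\underline X_{e(q)},\underline Y_{e(q)} \le 0$ keep the integrands bounded by $1$ there, while each right-hand side is rational with poles only at the points $-\gamma_k$ (resp. $-\hat\gamma_k$), all of which lie in $\{Re(s) < 0\}$ by (2.16); hence agreement on $(0,\infty)$ propagates to $\{Re(s) \ge 0\}$.

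I do not expect a genuine obstacle: the argument is careful bookkeeping for the substitutions $X \mapsto -X$, $Y \mapsto -Y$. The points that need attention are checking that $-X$ and $-Y$ really inherit all the structural hypotheses behind (2.1)--(2.3) (that $-W$ is a standard Brownian motion, that the $\vartheta_i$ stay pairwise distinct, that $\sigma > 0$ persists, that independence is preserved); tracking the sign changes so that (2.16) comes out with the stated orientation rather than the reversed one; and supplying the short analytic-continuation step above that upgrades Lemma 2.1(ii) from $s \ge 0$ to $Re(s) \ge 0$.
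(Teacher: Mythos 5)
Your proposal is correct and is exactly the paper's intended argument: the paper states only that Lemma 2.4 is obtained by applying Lemma 2.1 to the dual processes $-X_t$ and $-Y_t$, and you have supplied precisely that duality argument with the appropriate bookkeeping. The additional care you take (verifying $-X$, $-Y$ stay in the class (2.1), the sign transport of the ordering, and the analytic-continuation step from $s \ge 0$ to $Re(s)\ge 0$) is all consistent with what the paper leaves implicit.
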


\begin{Remark}
For $q > 0$ and $q \in \mathbb S^c$, a similar result to (2.17) holds, e.g.,
\begin{equation}
\begin{split}
\hat{\mathbb E}\left[e^{s\underline{Y}_{e(q)}}\right]
&=\prod_{k=1}^{n^-}\left(\frac{s+\vartheta_k}{\vartheta_k}\right)^{n_k}
\prod_{k=1}^{\tilde{N}}\left(\frac{\hat{\gamma}_{k}}{s+\hat{\gamma}_{k}}\right)^{\hat{N}^-_k},
\end{split}
\end{equation}
where $\hat{N}^-_k$ is the multiplicity of $\hat{\gamma}_k$; and $\sum_{k=1}^{\tilde{N}}\hat{N}^-_k=\sum_{k=1}^{n^-}n_k+1$.
\end{Remark}

\begin{Remark}
For any $q > 0$, from (2.9), (2.17) and (2.18), it can be concluded that both $\overline{X}_{e(q)}$ and $\underline{Y}_{e(q)}$ have probability density functions.
\end{Remark}

The following Lemma 2.5 is important. We remark that the result in Lemma 2.5 is not surprising as $\sigma > 0$ in (2.1), and its proof is omitted since it can be established by using almost the same discussion as in Theorem 2.1 in [24].

\begin{Lemma}
For $q > 0$, the function $V_q(x)$, defined as $V_q(x):=\mathbb P_x\left(U_{e(q)}> y\right)$ for given $y > b$, is continuously differentiable on $\mathbb R$. Particularly, it holds that
\begin{equation}
V_q(b-)=V_q(b+)\ \ and \ \ V^{\prime}_q(b-)=V^{\prime}_q(b+).
\end{equation}
\end{Lemma}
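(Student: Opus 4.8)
The plan is to reduce the problem to a classical regularity statement about the resolvent of a one-dimensional jump diffusion, exploiting crucially that $\sigma > 0$. First I would set up the Markovian machinery: since $U$ is a strong Markov process and $e(q)$ is independent of it, the function $V_q(x) = \mathbb{E}_x[e^{-q\tau}\,\mathbf{1}_{\{U_{e(q)} > y\}}]$ can be written, by conditioning on whether $e(q)$ occurs before or after the process reaches a neighborhood of $b$, as a resolvent-type integral. More concretely, on the two half-lines $\{x > b\}$ and $\{x < b\}$ the process $U$ behaves like the fixed Lévy process $Y = X - \delta t$ (resp. $X$), so $V_q$ satisfies on each open half-line an integro-differential equation of the form $(\mathcal{L} - q)V_q = 0$ away from $y$, where $\mathcal{L}$ carries a nondegenerate second-order term $\tfrac{\sigma^2}{2}\partial_{xx}$ because $\sigma > 0$. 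Standard elliptic regularity for such equations (the diffusion component dominates locally) gives that $V_q$ is $C^2$, hence $C^1$, on each of $\mathbb{R}\setminus\{b,y\}$.

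The heart of the matter is the matching at $x = b$, i.e. establishing \eqref{...} — that $V_q$ and $V_q'$ do not jump across the refraction level. The approach I would take is the excursion/strong-Markov decomposition used in Theorem 2.1 of [24]: write, for $x$ near $b$,
\[
V_q(x) = \mathbb{E}_x\!\left[e^{-q T}\,V_q(U_T)\right] + \mathbb{E}_x\!\left[\int_0^{T} q e^{-qs}\,\mathbf{1}_{\{U_s > y\}}\,ds\right],
\]
where $T$ is the first exit time of $U$ from a small interval $(b-\varepsilon, b+\varepsilon)$. Because $\sigma > 0$, the process started exactly at $b$ immediately enters both $\{x>b\}$ and $\{x<b\}$, and the local behaviour of $U$ near $b$ is, to leading order, that of a Brownian motion with drift perturbed by a bounded-rate compound Poisson term; the drift switches from $\mu - \delta$ (above $b$) to $\mu$ (below $b$), but this is a \emph{bounded} discrepancy acting only through the $ds$-integral, which is $O(\varepsilon^2)$ in expectation. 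One then shows that both the left and right limits $V_q(b\pm)$, $V_q'(b\pm)$ are forced to coincide with the common value dictated by this representation: continuity follows from $\mathbb{E}_x[e^{-qT}V_q(U_T)] \to V_q(b)$ as $x \to b$ together with quasi-left-continuity, and the $C^1$-matching follows by differentiating the representation in $x$ and using that the harmonic measure of $U$ on $\{b\pm\varepsilon\}$ from a point $x$ near $b$ is, up to $O(\varepsilon)$, the same as that of the underlying diffusion, whose scale function is $C^1$ at $b$.

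Concretely, the steps in order: (1) record that on each half-line $V_q$ solves $(\mathcal{L}_\pm - q)V_q = -q\mathbf{1}_{\{\cdot > y\}}$ in the distributional sense and invoke interior regularity to get $V_q \in C^1(\mathbb{R}\setminus\{b,y\})$ — and in fact $C^1$ across $y$ as well, since $y > b$ and near $y$ the process is a single fixed Lévy process with $\sigma > 0$; (2) prove $V_q(b-) = V_q(b+)$ by the exit-time representation and bounded convergence; (3) prove the derivative matching by a finer expansion of the same representation, controlling the compound-Poisson contribution as a lower-order term because its jump rate is finite and its contribution to exiting $(b-\varepsilon,b+\varepsilon)$ is $o(1)$ relative to the diffusive part. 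I expect step (3) — the $C^1$-matching at $b$ — to be the main obstacle: one must show the one-sided derivatives agree rather than merely that $V_q'$ has one-sided limits, and this requires quantifying that the refraction (a discontinuity in the drift coefficient, not in the diffusion coefficient) produces no kink, which is exactly the phenomenon that fails when $\sigma = 0$. Since this argument is essentially identical to the one in [24, Theorem 2.1], I would simply cite it after indicating the single change needed (replacing the role of the level $b$ in the $U^s$ setting by the level $b$ here and noting the sign of $\delta$ is immaterial to the local analysis).
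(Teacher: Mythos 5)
Your proposal is correct and ultimately follows the same route as the paper: the paper merely states that the result ``can be established by using almost the same discussion as in Theorem 2.1 in [24]'' and omits the argument, which is exactly the citation you fall back on after your sketch. The conceptual outline you give in between --- that $V_q$ solves $(\mathcal{L}_\pm - q)V_q = -q\mathbf{1}_{\{\cdot>y\}}$ on each half-line with a nondegenerate $\frac{\sigma^2}{2}\partial_{xx}$ term, that the refraction perturbs only the first-order coefficient, and that an exit-time decomposition near $b$ forces the $C^1$ pasting --- is a reasonable account of what the deferred argument in [24] contains, so there is no gap to report.
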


The following Lemma 2.6 is known as the Wiener-Hopf factorization, we refer the reader to Theorem 6.16 in [12] for its derivation.

\begin{Lemma}
For a L\'evy process $X_t$, which is not a compound Poisson process, it holds that $X_{e(q)}-\underline{X}_{e(q)}$ is independent of $\underline{X}_{e(q)}$ and equal in distribution to
$\overline{X}_{e(q)}$. This means that
\[\mathbb E\left[e^{i \theta X_{e(q)}}\right]=\mathbb E\left[e^{i \theta \underline{X}_{e(q)}}\right]
\mathbb E\left[e^{i \theta \overline{X}_{e(q)}}\right],
\]
where $Re(\theta)=0$.
\end{Lemma}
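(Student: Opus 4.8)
The plan is to establish the factorisation by a random-walk approximation together with a path decomposition at the epoch of the running maximum. Fixing $q>0$, I would discretise time with mesh $\delta>0$, set $S^{\delta}_k:=X_{k\delta}$ and $N_{\delta}:=\lceil e(q)/\delta\rceil$. Because $\mathbb P(N_{\delta}>n)=\mathbb P(e(q)>n\delta)=e^{-qn\delta}$, the variable $N_{\delta}$ is geometric, and since $e(q)$ is independent of $X$ it is independent of the i.i.d.\ increments $(S^{\delta}_k-S^{\delta}_{k-1})_{k\ge1}$; moreover $\delta N_{\delta}\downarrow e(q)$ as $\delta\downarrow0$ along $\delta=2^{-m}$. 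For the random walk $(S^{\delta}_k)$ killed at the independent geometric time $N_{\delta}$ one has the combinatorial Wiener--Hopf factorisation: writing $\overline S^{\delta}_{N_\delta}:=\max_{0\le k\le N_\delta}S^{\delta}_k$ and $\underline S^{\delta}_{N_\delta}:=\min_{0\le k\le N_\delta}S^{\delta}_k$, the pre-maximum segment of the path (hence the value of the maximum and the step at which it is first attained) is independent of the post-maximum segment (hence of $S^{\delta}_{N_\delta}-\overline S^{\delta}_{N_\delta}$), and $S^{\delta}_{N_\delta}-\overline S^{\delta}_{N_\delta}\overset{d}{=}\underline S^{\delta}_{N_\delta}$. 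I would take this from the classical combinatorial lemma, which for a fixed number $n$ of steps builds on the duality $(S_1,\dots,S_n)\overset{d}{=}(S_n-S_{n-1},\dots,S_n-S_0)$ and a cyclic-shift bijection, and is then averaged over $n$ using the lack of memory of the geometric law.

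Next I would let $\delta\downarrow0$. By right-continuity of the paths of $X$ one has, almost surely, $S^{\delta}_{N_\delta}=X_{\delta N_\delta}\to X_{e(q)}$, $\overline S^{\delta}_{N_\delta}\to\sup_{0\le t\le e(q)}X_t=\overline X_{e(q)}$ and $\underline S^{\delta}_{N_\delta}\to\underline X_{e(q)}$ --- the grid maxima converge to the true supremum because any near-optimal time lies within $\delta$ of a grid point at which, by right-continuity, $X$ is still near-optimal. Since all the characteristic functions in sight are bounded by $1$, the discrete factorisation passes to the limit and yields: (i) $\overline X_{e(q)}$ and $X_{e(q)}-\overline X_{e(q)}$ are independent; (ii) $X_{e(q)}-\overline X_{e(q)}\overset{d}{=}\underline X_{e(q)}$. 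Running the same argument for the dual process $-X$ (equivalently, decomposing the path at the epoch of the minimum) converts (i)--(ii) into the asserted statement that $X_{e(q)}-\underline X_{e(q)}$ is independent of $\underline X_{e(q)}$ and $X_{e(q)}-\underline X_{e(q)}\overset{d}{=}\overline X_{e(q)}$; the characteristic-function identity then follows by writing $X_{e(q)}=\underline X_{e(q)}+(X_{e(q)}-\underline X_{e(q)})$ and using the independence together with this last distributional identity, giving $\mathbb E[e^{i\theta X_{e(q)}}]=\mathbb E[e^{i\theta\underline X_{e(q)}}]\,\mathbb E[e^{i\theta\overline X_{e(q)}}]$ for $\mathrm{Re}(\theta)=0$.

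The step I expect to be the main obstacle is the passage to the limit: one must check that the \emph{joint} law of the pre- and post-maximum functionals of the discretised walk converges to the corresponding joint law for $X$, not merely that the marginals do. Almost-sure convergence of the individual functionals is handled by right-continuity of $X$ and independence of $e(q)$, and boundedness of characteristic functions upgrades this to convergence of the relevant transforms, so that independence and the distributional identities survive in the limit. The genuinely delicate point is that the epoch of the maximum should not be smeared out as $\delta\downarrow0$, and this is where one invokes that $X$ is not a compound Poisson process (in [12] this hypothesis underpins the good behaviour of the ladder structure behind the continuous-time factorisation). For the process of the present paper $\sigma>0$, so $X$ has a Gaussian component and is in particular not a compound Poisson process, and the hypothesis holds automatically. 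One could instead avoid discretisation and argue directly via excursion theory of $X$ reflected at its supremum, using the ascending ladder subordinator and the compensation formula; that machinery is heavier than needed here, so I would turn to it only if the approximation bookkeeping became unwieldy.
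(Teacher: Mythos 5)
The paper does not prove this lemma; it simply cites Theorem~6.16 of Kyprianou [12]. So your sketch is not ``a different proof from the paper'' so much as a proof where the paper offers none, and it follows one of the two standard routes to the L\'evy Wiener--Hopf factorization: random-walk approximation plus the combinatorial/duality factorization for walks killed at an independent geometric time (the other standard route is excursion theory of $X$ reflected at its supremum, which you correctly note is heavier machinery than needed). Your bookkeeping is sound: $N_\delta$ is geometric and independent of the increments because $e(q)\perp X$; the dyadic nesting gives $\delta N_\delta\downarrow e(q)$; right-continuity of the c\`adl\`ag paths gives almost-sure convergence of $S^\delta_{N_\delta}$, $\overline S^\delta_{N_\delta}$ and $\underline S^\delta_{N_\delta}$ to $X_{e(q)}$, $\overline X_{e(q)}$ and $\underline X_{e(q)}$ (your remark that a near-optimal time has a nearby grid point at which $X$ is still near-optimal, by right-continuity, is exactly the needed estimate for the grid maxima); a.s.\ convergence of the pair upgrades to weak convergence of the joint law, so independence and the distributional identity survive the limit; and passing to the dual $-X$ reformulates the result in terms of $\underline X_{e(q)}$ as stated.

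One small correction on your diagnosis of where the hypothesis ``not compound Poisson'' is used. Your approximation argument never tracks the \emph{epoch} of the maximum --- only the three functionals $X_{e(q)}$, $\overline X_{e(q)}$, $\underline X_{e(q)}$ --- and those converge by right-continuity alone, with no regularity hypothesis. So the claim that the hypothesis ``underpins the good behaviour of the ladder structure'' in the passage to the limit is misplaced here: the bare statement of the lemma (independence of $\underline X_{e(q)}$ and $X_{e(q)}-\underline X_{e(q)}$, and $X_{e(q)}-\underline X_{e(q)}\overset{d}{=}\overline X_{e(q)}$) in fact comes out of the random-walk argument for an arbitrary L\'evy process. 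The exclusion of compound Poisson in Kyprianou's Theorem~6.16 is needed for the \emph{full} theorem, which also factorizes the joint law with the epochs $G_{e(q)}$, $\overline G_{e(q)}$ and discusses the ladder processes; for a compound Poisson process the distinction between first and last time of the supremum (strict vs.\ weak ladders) does not vanish in the limit and must be handled separately. Since the present paper only quotes and uses the value factorization, and since here $\sigma>0$ so the process is certainly not compound Poisson, this is a harmless imprecision --- but worth getting right if you intend to present the argument as a proof of the lemma as stated.
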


To close this section, we present the following proposition, which gives the expression of
\[\mathbb P_x\left(U_{e(q)}>y\right)=q\int_{0}^{\infty}e^{-q t}\mathbb P_x\left(U_{t}>y\right)dt,\] where $y>b$ and $q \in \mathbb S$. The proof of Proposition 2.1 is long, thus it is left to the Appendix.
\begin{Proposition}
For $q \in \mathbb S$ and $y>b$, we have
\begin{equation}
\begin{split}
&\mathbb P_x\left(U_{e(q)}> y\right)=
\left\{\begin{array}{cc}
\sum_{k=1}^{M^+}J_{k}e^{\beta_{k}(x-b)},& x \leq b, \\
\sum_{k=1}^{\hat{M}^+}\hat{H}_{k}e^{\hat{\beta}_{k}(x-y)}+\sum_{k=1}^{\hat{N}^-}\hat{P}_{k}
e^{\hat{\gamma}_{k}(b-x)}, & b \leq x \leq y,\\
1+\sum_{k=1}^{\hat{N}^-}\hat{Q}_{k}e^{\hat{\gamma}_{k}(y-x)}+\sum_{k=1}^{\hat{N}^-}\hat{P}_{k}
e^{\hat{\gamma}_{k}(b-x)}, & x \geq y,
\end{array}\right.
\end{split}
\end{equation}
where $\hat{H}_k$ and $\hat{Q}_k$ are given by (A.17) and (A.18), respectively; $J_k$ and $\hat{P}_k$ are given by
rational expansion:
\begin{equation}
\begin{split}
&\sum_{i=1}^{M^+}\frac{J_i}{x-\beta_i}-
\sum_{i=1}^{\hat{N}^-}\frac{\hat{P}_i}{x+\hat{\gamma}_i}-\sum_{i=1}^{\hat{M}^+}\frac{\hat{H}_i}{x-\hat{\beta}_i}e^{\hat{\beta}_i(b-y)}\\
&=\frac{\prod_{k=1}^{m^+}(x-\eta_k)^{m_k}\prod_{k=1}^{n^-}(x+\vartheta_k)^{n_k}}
{\prod_{i=1}^{M^+}(x-\beta_i)\prod_{i=1}^{\hat{N}^-}(x+\hat{\gamma}_i)}\times\\
&\sum_{k=1}^{\hat{M}^+}
\frac{\prod_{i=1}^{M^+}(\hat{\beta}_k-\beta_i)\prod_{i=1}^{\hat{N}^-}(\hat{\beta}_k+\hat{\gamma}_i)}
{\prod_{i=1}^{m^+}(\hat{\beta}_k-\eta_i)^{m_i}\prod_{i=1}^{n^-}(\hat{\beta}_k+\vartheta_i)^{n_i}}
\frac{-\hat{H}_k}{x-\hat{\beta}_k}e^{\hat{\beta}_k(b-y)}.
\end{split}
\end{equation}
\end{Proposition}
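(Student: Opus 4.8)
The plan is to combine the strong Markov property of $U$ at the crossing time of the level $b$ with the known Wiener--Hopf identities of Lemma 2.1 and Lemma 2.4, exploiting the $C^1$-regularity across $b$ provided by Lemma 2.5. First I would observe that, because $U$ behaves like $Y$ (i.e.\ $X$ shifted by drift $-\delta$) while above $b$ and like $X$ while below $b$, the function $V_q(x)=\mathbb{P}_x(U_{e(q)}>y)$ satisfies, on each of the three regions $x\le b$, $b\le x\le y$, $x\ge y$, a linear integro-differential equation of the form $(\mathcal{L}-q)V_q=0$ where $\mathcal{L}$ is the generator of $X$ (for $x<b$) or of $Y$ (for $x>b$), together with the boundary behaviour $V_q(-\infty)=0$, $V_q(+\infty)=1$. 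Since the jump densities $p^{\pm}$ have rational Laplace transforms, the bounded solutions of these homogeneous equations are exponential sums: decaying exponentials $e^{\beta_k(x-b)}$ (roots in $Im(z)<0$, from Lemma 2.1) to the left, and a mix of $e^{\hat\beta_k(x-y)}$ and $e^{\hat\gamma_k(b-x)}$ (roots of $\hat\psi(z)=q$ in both half-planes, from Lemma 2.1(i) and Lemma 2.4(i)) in the middle and right regions. This gives the ansatz (2.20) with unknown coefficients $J_k$, $\hat H_k$, $\hat P_k$, $\hat Q_k$; for $q\in\mathbb{S}$ all roots are simple by Remark 2.4, so no polynomial factors appear.

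Next I would pin down the coefficients. The middle-region part involving $\hat H_k$ and $\hat Q_k$ is exactly the excursion of $Y$ away from the two barriers and should be read off from the one-barrier and two-barrier exit identities for $Y$ expressed through $\hat{\mathbb{E}}[e^{-s\overline Y_{e(q)}}]$ and $\hat{\mathbb{E}}[e^{s\underline Y_{e(q)}}]$ in (2.8) and (2.17); this is the content of the cross-references (A.17)--(A.18), and I would derive those in the Appendix by the standard argument that $\mathbb{P}_x(U_{e(q)}>y)$ for $x\ge b$, killed on downcrossing $b$, equals the corresponding quantity for the refracted-free process $Y$ started at $x$. What remains is to determine $J_k$ and $\hat P_k$, and here the two matching conditions of Lemma 2.5, namely $V_q(b-)=V_q(b+)$ and $V_q'(b-)=V_q'(b+)$, together with continuity and $C^1$-matching at $y$ (which also follows from Lemma 2.5 applied with the roles adjusted, or directly since $\sigma>0$), produce a linear system. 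I would package this system in the generating-function / partial-fraction form (2.21): multiplying the matching relations by suitable rational prefactors built from $\prod(x-\eta_k)^{m_k}\prod(x+\vartheta_k)^{n_k}$ over $\prod(x-\beta_i)\prod(x+\hat\gamma_i)$ turns the $2\sum(M_k)$-dimensional linear algebra into the statement that two rational functions with prescribed pole sets coincide, whence $J_k$ and $\hat P_k$ are their residues.

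The main obstacle, as flagged in the text, is not any one of these steps individually but verifying that the coefficient-counting works out: one must check that the number of unknowns $J_k$ (there are $M^+=1+\sum m_k$ of them for $q\in\mathbb{S}$) plus the number of $\hat P_k$ (there are $\hat N^-=1+\sum n_k$) is matched exactly by the number of independent linear constraints coming from the four matching conditions at $b$ and $y$ plus the decay/normalisation at $\pm\infty$, and that the resulting system is non-degenerate so that the partial-fraction identity (2.21) indeed has a unique solution with poles only at the $\beta_i$ and $-\hat\gamma_i$. This is where the simplicity hypothesis $q\in\mathbb{S}$ is essential and where the identities (2.7), (2.8) on the total multiplicities $\sum M_k=\sum\hat M_k=1+\sum m_k$ (and their duals) do the bookkeeping. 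Once the dimension count closes, extracting (2.21) is a routine partial-fraction manipulation, and (2.20) follows; the extension to $q\in\mathbb{S}^c$ is then obtained by the approximating argument of Lemma 2.3 and the continuity theorem, but that is deferred to Section 4 and is not needed here.
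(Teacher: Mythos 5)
Your overall scaffold---write $V_q(x)=\mathbb P_x(U_{e(q)}>y)$ via the strong Markov property at the crossing of $b$, obtain an exponential ansatz in each region from the roots of $\psi=q$ and $\hat\psi=q$, then pin down the coefficients and package them as a partial-fraction identity---is the right shape and matches the paper. The middle step, computing $\hat{\mathbb P}_x(Y_{e(q)}>y,\,\underline Y_{e(q)}\ge b)$ via Wiener--Hopf and reading off $\hat H_k,\hat Q_k$ from (A.17)--(A.18), is also correct. But the part you flag as ``the main obstacle,'' the coefficient-counting, is where your proposal has a genuine gap.

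You propose to determine the $M^+ + \hat N^- = (1+\sum m_k)+(1+\sum n_k)$ unknowns $J_k,\hat P_k$ from the two $C^1$-matching conditions at $b$ (Lemma 2.5), two more at $y$, plus ``decay/normalisation at $\pm\infty$.'' This cannot close. Decay at $-\infty$ and the constant $1$ at $+\infty$ are already built into the ansatz and yield no further equations. More importantly, $C^1$-matching at $y$ is \emph{vacuous} for $J_k,\hat P_k$: in the displayed form (2.20) the $\hat P_k e^{\hat\gamma_k(b-x)}$ terms appear identically on both sides of $y$, so continuity and $C^1$-smoothness at $y$ only constrain the already-determined $\hat H_k,\hat Q_k$ (this is the content of $\sum\hat H_i-\sum\hat Q_i-1=0$, obtained by letting $\theta\uparrow\infty$ in (A.29)). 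You are therefore left with two equations, (A.25)--(A.26), for $M^++\hat N^-$ unknowns; for any model with jumps the system is badly underdetermined.

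The missing $\sum m_k + \sum n_k$ equations in the paper do not come from boundary matching at all. They come from a \emph{consistency} argument between the two Markov-property representations of $V_q$: for $x<b$ one has both $V_q(x)=\sum J_k e^{\beta_k(x-b)}$ and the first-passage expression $V_q(x)=\mathbb E_x[e^{-q\tau_b^+}V_q(X_{\tau_b^+})]$ expanded via Lemma A.2(i) and the known form of $V_q$ on $(b,\infty)$. Laplace-transforming in $x$ (equations (A.32)--(A.33)) turns this into a rational identity in a variable $\theta$, and evaluating derivatives at the zeros $\theta=-\eta_k$ of the Wiener--Hopf factor $\psi^+$ (which has a zero of order $m_k$ there) produces the $\sum m_k$ equations (A.27); the dual argument at $\theta=-\vartheta_k$ using $\hat\psi^-$ gives the $\sum n_k$ equations (A.28). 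Those $\sum m_k+\sum n_k$ identities, together with (A.25)--(A.26), force the rational function $L(x)$ of (A.21) to vanish to order $m_k$ at $\eta_k$ and order $n_k$ at $-\vartheta_k$ and to have the stated degree at infinity, which is exactly what pins down the partial-fraction identity (2.21). Without this mechanism---which is the technical core of Lemma A.4---your ``packaging in generating-function form'' has no input to package, and the proposal as written does not prove the proposition.
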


\begin{Remark}
Formula (2.20) contains the roots of $\psi(z)=q$ and $\hat{\psi}(z)=q$, i.e., $\beta_k$, $\hat{\beta}_k$ and $\hat{\gamma}_k$. This poses a limitation to extend the result in Proposition 2.1 to a refracted L\'evy process $U$ driven by other L\'evy process, because we cannot characterize the roots of $\psi(z)=q$ for a general L\'evy process $X$ $($note that $\psi(z)=\ln\left(\mathbb E\left[e^{iz X_1}\right]\right)$ if $z \in \mathbb R$$)$.
In Theorem 4.1, we derive another expression for $\mathbb P_x\left(U_{e(q)} > y\right)$, which is free of $\beta_k$, $\hat{\beta}_k$ and $\hat{\gamma}_k$.
\end{Remark}

\section{An important result}
In this section, the following result is derived, and we have to say that the ideas in the derivation are interesting.
\begin{Proposition}
For given $y > b$, $q > 0$ and $Re(\phi)=0$, we have
\begin{equation}
\begin{split}
&\int_{-\infty}^{\infty}e^{-\phi(x-b)}\left(\mathbb P_x\left(U_{e(q)}> y\right) -\hat{\mathbb P}_x\left(Y_{e(q)}> y\right)\right)dx\\
&=\hat{\mathbb E}\left[e^{\phi \underline{Y}_{e(q)}}\right]\mathbb E\left[e^{\phi \overline{X}_{e(q)}}\right]
\int_{0}^{\infty}F_1(x+y-b)e^{\phi x}dx.
\end{split}
\end{equation}
\end{Proposition}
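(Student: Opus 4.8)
The plan is to prove (3.1) first for $q\in\mathbb S$, where Proposition 2.1 and the simple‑root picture of Remark 2.6 are available, and then to extend it to every $q>0$ by continuity, using that $\mathbb S^{c}$ is finite (Lemma 2.3). Put $V_{q}(x):=\mathbb P_{x}(U_{e(q)}>y)$, $\hat V_{q}(x):=\hat{\mathbb P}_{x}(Y_{e(q)}>y)$ and $w:=V_{q}-\hat V_{q}$. Since $Y$ is a L\'evy process, $\hat V_{q}(x)=\hat{\mathbb P}(Y_{e(q)}>y-x)$, and by the Wiener--Hopf factorization of $Y$ (Lemma 2.6; legitimate since $\sigma>0$) together with Lemmas 2.1 and 2.4 this is a finite sum of exponentials $e^{-\hat{\beta}_{k}(y-x)}$ on $\{x<y\}$ and equals $1$ minus such a sum on $\{x>y\}$. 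Combining this with Proposition 2.1, $w$ is, on each of $(-\infty,b]$, $[b,y]$, $[y,\infty)$, a finite sum of exponentials; moreover the constant terms of $V_{q}$ and $\hat V_{q}$ cancel (both tend to $0$ as $x\to-\infty$ and to $1$ as $x\to+\infty$), so $w$ decays exponentially at $\pm\infty$. Hence $w\in L^{1}(\mathbb R)$ and the left‑hand side of (3.1) is an absolutely convergent Fourier transform for every $\phi$ with $Re(\phi)=0$, the value $\phi=0$ included. (One could also just substitute the explicit piecewise forms of $V_{q}$ and $\hat V_{q}$, integrate term by term, and match against the right‑hand side; the route below is slightly cleaner and makes the product structure of the right‑hand side emerge by itself.)

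The key is a resolvent identity. Both $V_{q}=qR^{U}_{q}\mathbf{1}_{(y,\infty)}$ and $\hat V_{q}=qR^{Y}_{q}\mathbf{1}_{(y,\infty)}$ satisfy $(q-\mathcal A)V=q\mathbf{1}_{(y,\infty)}$ for the (extended) generator $\mathcal A=\mathcal A_{U}$, respectively $\mathcal A=\mathcal A_{Y}$. Because $U$ evolves as $X$ on $\{x\le b\}$ and as $Y$ on $\{x>b\}$, one has $\mathcal A_{U}f=\mathcal A_{Y}f+\delta\,\mathbf{1}_{(-\infty,b]}f'$ on $C^{1}$ functions, which is legitimate here because $V_{q}\in C^{1}(\mathbb R)$ by Lemma 2.5 (and $\hat V_{q}$ is $C^{1}$ as well, since $Y_{e(q)}$ has a continuous density). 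Subtracting the two resolvent equations cancels the source term and yields
\begin{equation}
(q-\mathcal A_{Y})\,w=\delta\,\mathbf{1}_{(-\infty,b]}\,V_{q}'.
\end{equation}
Now $\mathcal A_{Y}$ has constant coefficients with Fourier symbol $\hat{\psi}$, so, applying $\mathcal F[f](\theta)=\int_{\mathbb R}e^{-i\theta x}f(x)\,dx$ and writing $\phi=i\theta$, one gets $\mathcal F[w](\theta)=\delta(q-\hat{\psi}(\theta))^{-1}\mathcal F[\mathbf{1}_{(-\infty,b]}V_{q}'](\theta)$, and $(q-\hat{\psi}(\theta))^{-1}=q^{-1}\hat{\mathbb E}[e^{\phi Y_{e(q)}}]=q^{-1}\hat{\mathbb E}[e^{\phi\underline{Y}_{e(q)}}]\hat{\mathbb E}[e^{\phi\overline{Y}_{e(q)}}]$ by the Wiener--Hopf factorization of $Y$. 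Inserting $V_{q}(x)=\sum_{k=1}^{M^{+}}J_{k}e^{\beta_{k}(x-b)}$ for $x\le b$ (Proposition 2.1) and integrating, $\mathcal F[\mathbf{1}_{(-\infty,b]}V_{q}'](\theta)=e^{-\phi b}\sum_{k=1}^{M^{+}}J_{k}\beta_{k}/(\beta_{k}-\phi)$, whence $e^{\phi b}\mathcal F[w](\theta)$ — which is exactly the left‑hand side of (3.1) — equals $\frac{\delta}{q}\hat{\mathbb E}[e^{\phi\underline{Y}_{e(q)}}]\hat{\mathbb E}[e^{\phi\overline{Y}_{e(q)}}]\sum_{k}J_{k}\beta_{k}/(\beta_{k}-\phi)$. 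Removing the common factor $\hat{\mathbb E}[e^{\phi\underline{Y}_{e(q)}}]$ present on both sides of (3.1), it remains to verify
\begin{equation}
\frac{\delta}{q}\,\hat{\mathbb E}\!\left[e^{\phi\overline{Y}_{e(q)}}\right]\sum_{k=1}^{M^{+}}\frac{J_{k}\beta_{k}}{\beta_{k}-\phi}
=\mathbb E\!\left[e^{\phi\overline{X}_{e(q)}}\right]\int_{0}^{\infty}F_{1}(x+y-b)\,e^{\phi x}\,dx.
\end{equation}

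To close, note that $F_{1}$ is a finite sum of exponentials $e^{-\hat{\beta}_{k}x}$ — invert the rational transform (2.10), written out by means of (2.8)--(2.9); for $q\in\mathbb S$ it has only the simple poles $s=-\hat{\beta}_{k}$ — so $\int_{0}^{\infty}F_{1}(x+y-b)e^{\phi x}\,dx$ is rational in $\phi$ with simple poles at $\{\hat{\beta}_{k}\}$, while $\hat{\mathbb E}[e^{\phi\overline{Y}_{e(q)}}]$ and $\mathbb E[e^{\phi\overline{X}_{e(q)}}]$ are rational in $\phi$ by (2.8), (2.9) and carry the same explicit numerator factor $\prod_{k}(\eta_{k}-\phi)^{m_{k}}$. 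Cancelling that factor and clearing the remaining denominators, the displayed identity reduces to the equality of two polynomials in $\phi$ of degree $M^{+}-1$; evaluating them at $\phi=\beta_{1},\dots,\beta_{M^{+}}$ turns the claim into $M^{+}$ scalar identities for $J_{1},\dots,J_{M^{+}}$, which are precisely the content of the defining partial‑fraction relation (2.21) (with $\hat{H}_{k},\hat{Q}_{k}$ given by (A.17)--(A.18) and the smooth‑fit relations $V_{q}(b-)=V_{q}(b+)$, $V_{q}'(b-)=V_{q}'(b+)$ of Lemma 2.5). I expect this last, purely algebraic, bookkeeping to be the main obstacle. Finally, the extension to $q\notin\mathbb S$: $\mathbb S^{c}$ is finite by Lemma 2.3, and both sides of (3.1) are continuous in $q$ on $(0,\infty)$ — the left side by dominated convergence, since the amplitude and the exponential decay rate of $w$ vary continuously with $q$ and so furnish a common $L^{1}$ majorant on compact $q$‑intervals, and the right side because the Wiener--Hopf factors and $F_{1}$ depend continuously on $q$ — so choosing $q_{n}\in\mathbb S$ with $q_{n}\to q$ and passing to the limit completes the proof.
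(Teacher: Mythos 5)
Your resolvent‑identity route for $q\in\mathbb S$ is a genuinely different decomposition from the one in the paper. The paper proves (3.1) on $\mathbb S$ by Fourier‑transforming the explicit piecewise‑exponential formula (2.20) and then verifying the identity chain (3.2)--(3.5); you instead observe the generator relation $\mathcal A_U=\mathcal A_Y+\delta\,\mathbf 1_{(-\infty,b]}\partial_x$, subtract the two resolvent equations so the source term $q\mathbf 1_{(y,\infty)}$ cancels, and then apply the Fourier multiplier $(q-\hat\psi)^{-1}=q^{-1}\hat{\mathbb E}[e^{\phi Y_{e(q)}}]$. This is conceptually cleaner and immediately exhibits the product structure on the right of (3.1). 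Two caveats: you should say that the resolvent equations and the multiplier step are meant distributionally, since $\mathcal A_Y$ is second order ($\sigma>0$) while $V_q$ is only $C^1$ across $b$ and across $y$ --- harmless here because $w$ is piecewise smooth, $C^1$‑matched, and exponentially decaying, but it deserves a sentence.

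However, the proof is not complete. First, the identity you reduce to, namely
$\frac{\delta}{q}\,\hat{\mathbb E}\!\left[e^{\phi\overline Y_{e(q)}}\right]\sum_{k}J_k\beta_k/(\beta_k-\phi)
=\mathbb E\!\left[e^{\phi\overline X_{e(q)}}\right]\int_0^\infty F_1(x+y-b)e^{\phi x}\,dx$,
is asserted to be ``precisely the content of (2.21)'' but is not carried out, and it is not quite that: (2.21) determines $J_k$ jointly with $\hat P_k$, and the $\hat H_k$'s you seemingly avoided re‑enter through $J_k$. This is the same bookkeeping the paper performs through (3.5)--(3.7), and without it the case $q\in\mathbb S$ is unfinished.

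Second, and more seriously, the passage to $q\in\mathbb S^c$ is glossed over. You say ``the right side [is continuous] because the Wiener--Hopf factors and $F_1$ depend continuously on $q$,'' but this is precisely where the paper's approximating procedure does its work. As $q_n\to q\in\mathbb S^c$, roots $\hat\beta_k(q_n)$ coalesce and the partial‑fraction coefficients of $F_1^{n}$ in (3.6) blow up; one cannot read off $\int_0^\infty F_1^n(x+y-b)e^{\phi x}dx\to\int_0^\infty F_1(x+y-b)e^{\phi x}dx$ from the convergence of $\int_0^\infty e^{-sx}F_1^n(x)dx$ on $Re(s)\ge0$ alone, because the shift by $y-b>0$ means one must also control $\int_0^{y-b}e^{\phi t}F_1^n(t)dt$ for purely imaginary $\phi$. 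The paper does this via Lemma 2.2 ($F_1+1\ge 0$), the measures $M_1^n$ and $M_1$ in (3.22), Feller's continuity theorem for Laplace transforms to get (3.24), the normalized distributions $P_1^n$ in (3.25), and then (3.26)--(3.28). Your sketch needs this, or an equivalent, to be a proof; ``$F_1$ depends continuously on $q$'' hides the whole difficulty.
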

In the following, we first show that Proposition 3.1 holds for $q \in \mathbb S$ and then prove that it is also valid for $q \in \mathbb S^c$.

\subsection{Proof of Proposition 3.1 with $q \in \mathbb S$}
For $Re(\phi) =0$, it follows from (2.20) and (A.25) that
\begin{equation}
\begin{split}
&\int_{-\infty}^{\infty}e^{-\phi(x-b)}d\mathbb P_x\left(U_{e(q)}>y\right)
=\sum_{i=1}^{M^+}\frac{J_i\phi}{\beta_i-\phi}
+\sum_{i=1}^{\hat{N}^-}\frac{\hat{P}_i\phi}{\phi+\hat{\gamma}_i}
\\
&-\sum_{i=1}^{\hat{M}^+}\frac{\hat{H}_i\phi}{\hat{\beta}_i-\phi}e^{\hat{\beta}_i(b-y)}+e^{\phi(b-y)}
\left(\sum_{i=1}^{\hat{M}^+}\frac{\hat{H}_i \hat{\beta}_i}{\hat{\beta}_i-\phi}-\sum_{i=1}^{\hat{N}^-}\frac{\hat{Q}_i \hat{\gamma}_i}{\hat{\gamma}_i+\phi}\right).
\end{split}
\end{equation}

It can be proved that
\begin{equation}
\begin{split}
&\sum_{i=1}^{\hat{M}^+}\frac{\hat{H}_i \hat{\beta}_i}{\hat{\beta}_i-\phi}-\sum_{i=1}^{\hat{N}^-}\frac{\hat{Q}_i \hat{\gamma}_i}{\hat{\gamma}_i+\phi}
=1+\sum_{i=1}^{\hat{M}^+}\frac{\hat{H}_i \phi }{\hat{\beta}_i-\phi}+\sum_{i=1}^{\hat{N}^-}\frac{\hat{Q}_i \phi}{\hat{\gamma}_i+\phi}\\
&=\hat{\psi}^+(-\phi)\hat{\psi}^-(\phi)=\hat{\mathbb E}\left[e^{\phi \overline{Y}_{e(q)}}\right]\hat{\mathbb E}\left[e^{\phi \underline{Y}_{e(q)}}\right]=\hat{\mathbb E}\left[e^{\phi Y_{e(q)}}\right],
\end{split}
\end{equation}
where $\hat{\psi}^+(\cdot)$ and $\hat{\psi}^-(\cdot)$ are given by (A.5) and (A.6); the first equality is due to the fact that $\sum_{i=1}^{\hat{M}^+}\hat{H}_i-\sum_{i=1}^{\hat{N}^-}\hat{Q}_i-1=0$ (let $\theta \uparrow \infty$ in (A.29));
the second and the third equality follows respectively from (A.29) and (A.7), and the final one is a result of Lemma 2.6.

In addition, for $Re(\phi)=0$, applying integration by parts will give us
\begin{equation}
\begin{split}
&\int_{-\infty}^{\infty}e^{-\phi(x-b)}d\mathbb P_x\left(U_{e(q)}>y\right)
-e^{\phi (b-y)}\hat{\mathbb E}\left[e^{\phi Y_{e(q)}}\right]\\
&=\int_{-\infty}^{\infty}e^{-\phi(x-b)}d\mathbb P_x\left(U_{e(q)}>y\right)
-\int_{-\infty}^{\infty}e^{-\phi(x-b)}d\hat{\mathbb P}\left(Y_{e(q)}> y-x\right)\\
&=\phi \int_{-\infty}^{\infty}e^{-\phi(x-b)}\left(\mathbb P_x\left(U_{e(q)}>y\right)-\hat{\mathbb P}_x\left(Y_{e(q)}> y\right)\right)dx.
\end{split}
\end{equation}
Note that (see (3.10) in the following Lemma 3.1)
\[
\begin{split}
\int_{-\infty}^{\infty}|\mathbb P_x\left(U_{e(q)}>y\right)-\hat{\mathbb P}_x\left(Y_{e(q)}> y\right)|dx\leq \frac{|\delta|}{q}.
\end{split}
\]

Besides, from (2.21), (A.2), (A.4), (A.6), (A.7) and (A.17), after some straightforward calculations, we can derive (note that $M^+=\hat{M}^+$ if $q \in \mathbb S$)
\begin{equation}
\begin{split}
&\sum_{i=1}^{M^+}\frac{J_i\phi}{\beta_i-\phi}
+\sum_{i=1}^{\hat{N}^-}\frac{\hat{P}_i\phi}{\phi+\hat{\gamma}_i}
-\sum_{i=1}^{\hat{M}^+}\frac{\hat{H}_i\phi}{\hat{\beta}_i-\phi}e^{\hat{\beta}_i(b-y)}\\
&=\phi\hat{\mathbb E}\left[e^{\phi \underline{Y}_{e(q)}}\right]\mathbb E\left[e^{\phi \overline{X}_{e(q)}}\right]
\int_{0}^{\infty}F_0(x+y-b)e^{\phi x}dx,
\end{split}
\end{equation}
where
\begin{equation}
F_0(x)=\sum_{i=1}^{\hat{M}^+}e^{-\hat{\beta}_ix}\prod_{k=1}^{M^+}\frac{\hat{\beta}_i-\beta_k}{\beta_k}
\prod_{k=1,k \neq i}^{\hat{M}^+}\frac{\hat{\beta}_k}{\hat{\beta}_i-\hat{\beta}_k}, \ \ x > 0.
\end{equation}
Finally, for $s>0$, it holds that
\begin{equation}
\begin{split}
&\int_{0}^{\infty} e^{-s x}F_0(x)dx
=\sum_{i=1}^{\hat{M}^+}\prod_{k=1}^{M^+}\frac{\hat{\beta}_i-\beta_k}{\beta_k}
\prod_{k=1,k \neq i}^{\hat{M}^+}\frac{\hat{\beta}_k}{\hat{\beta}_i-\hat{\beta}_k}\frac{1}{\hat{\beta}_i+s}\\
&=\frac{1}{s}\left(\prod_{k=1}^{M^+}\frac{s+\beta_k}{\beta_k}
\prod_{k=1}^{\hat{M}^+}\frac{\hat{\beta}_k}{s+\hat{\beta}_k}
-1\right)
=\frac{1}{s}\left(\frac{\hat{\mathbb E}\left[e^{-s \overline{Y}_{e(q)}}\right]}
{\mathbb E\left[e^{-s \overline{X}_{e(q)}}\right]}-1\right),
\end{split}
\end{equation}
where the second equality follows from the rational expansion and the third one is due to (A.4), (A.5) and (A.7).

Formulas (2.10) and (3.7) conform that $F_1(x)=F_0(x)$ for $x > 0$.
Therefore, (3.1) for $q \in \mathbb S$ is derived from (3.2)--(3.5).

\subsection{Proof of Proposition 3.1 for $q>0$ and $q \in \mathbb S^c$}
First, for such a $q>0$ and $q \in \mathbb S^c$, Lemma 2.3 implies that there exists a sequences of $q_n \in \mathbb S$ such that $\lim_{n \uparrow \infty}q_n \downarrow q$. In Subsection 3.1, we have shown that (3.1) holds for $q_n$, this gives
\begin{equation}
\begin{split}
&\int_{-\infty}^{\infty}e^{-\phi(x-b)}\left(\mathbb P_x\left(U_{e(q_n)}> y\right) -\hat{\mathbb P}_x\left(Y_{e(q_n)}> y\right)\right)dx \\
&=\hat{\mathbb E}\left[e^{\phi \underline{Y}_{e(q_n)}}\right]\mathbb E\left[e^{\phi \overline{X}_{e(q_n)}}\right]
\int_{0}^{\infty}F_1^n(x+y-b)e^{\phi x}dx,
\end{split}
\end{equation}
where $Re(\phi)=0$ and
\begin{equation}
\begin{split}
&\int_{0}^{\infty} e^{-s x}F_1^n(x)dx
=\frac{1}{s}\left(\frac{\hat{\mathbb E}\left[e^{-s \overline{Y}_{e(q_n)}}\right]}
{\mathbb E\left[e^{-s \overline{X}_{e(q_n)}}\right]}-1\right), \ \  s > 0.
\end{split}
\end{equation}

\begin{Lemma}
For given $\delta, y \in \mathbb R$, we have
\begin{equation}
\begin{split}
&\int_{-\infty}^{\infty}|\mathbb P_x\left(U_t> y\right) -\hat{\mathbb P}_x\left(Y_t> y\right)|dx
\leq |\delta| t,
\end{split}
\end{equation}
and
\begin{equation}
\begin{split}
&\int_{-\infty}^{\infty}|\mathbb P_x\left(\overline{X}_t> y\right) -\hat{\mathbb P}_x\left(\overline{Y}_t> y\right)|dx
\leq |\delta| t.
\end{split}
\end{equation}
\end{Lemma}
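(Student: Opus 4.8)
The plan is to realise $U$ and $Y$ simultaneously on one probability space, driven by the same process $X$, to exploit the elementary pathwise estimate $|U_t-Y_t|\le|\delta|t$, and then to integrate the resulting pointwise-in-$x$ bound by Tonelli's theorem. I describe the argument for the first inequality; the second is obtained by the same device applied to the running suprema.

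First I would work under $\mathbb P_x$, so that $X_0=x$, take $U$ to be the unique strong solution of (1.1), and set $Y_t=X_t-\delta t$; then $U$ has law $\mathbb P_x$ and $Y$ has law $\hat{\mathbb P}_x$. Subtracting the two defining relations gives
\[
U_t-Y_t=\delta\int_0^t\textbf{1}_{\{U_s\le b\}}\,ds ,
\]
whose right‑hand side lies between $0$ and $\delta t$; in particular, when $\delta\ge0$ one has $Y_t\le U_t\le Y_t+\delta t$ for every $\omega$. Hence $\textbf{1}_{\{U_t>y\}}-\textbf{1}_{\{Y_t>y\}}=\textbf{1}_{\{Y_t\le y<U_t\}}$, and on this event $y<U_t\le Y_t+\delta t$, so that $\textbf{1}_{\{U_t>y\}}-\textbf{1}_{\{Y_t>y\}}\le\textbf{1}_{\{y-\delta t<Y_t\le y\}}$ pointwise. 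Taking $\mathbb P_x$‑expectations yields, for each fixed $x$,
\[
0\le\mathbb P_x(U_t>y)-\hat{\mathbb P}_x(Y_t>y)\le\hat{\mathbb P}_x\bigl(y-\delta t<Y_t\le y\bigr).
\]
(When $\delta<0$ the ordering of $U_t$ and $Y_t$ reverses and the bounding interval becomes $(y,\,y-\delta t]$; nothing else changes.)

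Next I would integrate this in $x$. Writing $\hat{\mathbb P}_x(y-\delta t<Y_t\le y)=\mathbb P\bigl(y-x<X_t\le y+\delta t-x\bigr)$ with $\mathbb P=\mathbb P_0$, the integrand is nonnegative and jointly measurable in $(x,\omega)$, so Tonelli's theorem gives
\[
\int_{-\infty}^{\infty}\hat{\mathbb P}_x\bigl(y-\delta t<Y_t\le y\bigr)\,dx
=\mathbb E\!\left[\int_{-\infty}^{\infty}\textbf{1}_{\{y-X_t<x\le y+\delta t-X_t\}}\,dx\right]=\delta t ,
\]
since the inner $x$‑set is an interval of length $\delta t$. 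Combined with the pointwise bound this gives $\int_{\mathbb R}|\mathbb P_x(U_t>y)-\hat{\mathbb P}_x(Y_t>y)|\,dx\le\delta t=|\delta|t$. For the second inequality I would run the same two steps with $(U_t,Y_t)$ replaced by $(\overline X_t,\overline Y_t)=\bigl(\sup_{s\le t}X_s,\ \sup_{s\le t}(X_s-\delta s)\bigr)$: for $\delta\ge0$ the inequalities $X_s-\delta s\le X_s$ and $\sup_{s\le t}(X_s-\delta s)\ge\sup_{s\le t}X_s-\delta t$ again sandwich $\overline Y_t\le\overline X_t\le\overline Y_t+\delta t$, after which the estimate on $\textbf{1}_{\{\overline X_t>y\}}-\textbf{1}_{\{\overline Y_t>y\}}$ and the integration in $x$ go through verbatim.

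The only point needing care is the interchange of $\mathbb E$ and $\int dx$ together with the measurability of $x\mapsto\mathbb P_x(U_t>y)$; but this is not a real obstacle. The interchange actually used is the one displayed above, whose integrand is manifestly jointly measurable and nonnegative, so Tonelli applies directly; and $x\mapsto\mathbb P_x(U_t>y)$ is Borel because $U$ is a strong Markov process with measurable semigroup. Alternatively one may fix a single probability space carrying $X$ and, for every $x$, the strong solution $U^{x}$ of $U^{x}_t=x+X_t-\delta\int_0^t\textbf{1}_{\{U^{x}_s>b\}}ds$ in a jointly measurable fashion, and carry out the whole argument there.
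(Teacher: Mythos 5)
Your proof is correct and takes essentially the same approach as the paper: both exploit the pathwise sandwich $Y_t\le U_t\le X_t=Y_t+\delta t$ (reversed when $\delta<0$), observe that the difference of probabilities at each $x$ equals $\mathbb P\bigl(y-x<X_t\le y-x+\delta t\bigr)$, and integrate in $x$ under the expectation by Tonelli/Fubini to get $|\delta|t$, with the same $\pm|\delta|t$ comparison of $\overline{X}_t$ and $\overline{Y}_t$ for the second inequality.
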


\begin{proof}
Recall (1.1) and $Y_t=X_t-\delta t$.
For $\delta> 0$, it holds that
\[\mathbb P_x\left(X_t> y\right) \geq\mathbb P_x\left(U_t> y\right) \geq \hat{\mathbb P}_x\left(Y_t> y\right),\]
 thus
\[
\begin{split}
&\int_{-\infty}^{\infty}|\mathbb P_x\left(U_t> y\right) -\hat{\mathbb P}_x\left(Y_t> y\right)|dx
=\int_{-\infty}^{\infty}\mathbb P_x\left(U_t> y\right) -\hat{\mathbb P}_x\left(Y_t> y\right)dx \\
&\leq \int_{-\infty}^{\infty}\mathbb P_x\left(X_t> y\right) - \hat{\mathbb P}_x\left(Y_t> y\right)dx
=\int_{-\infty}^{\infty}\mathbb E\left[\textbf{1}_{\{y-x < X_t \leq y-x+\delta t\}}\right]dx\\
&=\mathbb E\left[\int_{-\infty}^{\infty}\textbf{1}_{\{y-x < X_t \leq y-x+\delta t\}}dx\right]=\delta t,
\end{split}
\]
where the penultimate equality is due to the Fubini theorem. For $\delta < 0$,  it is obvious that
 \[\mathbb P_x\left(X_t > y\right) \leq \mathbb P_x\left(U_t > y\right) \leq \hat{\mathbb P}_x\left(Y_t > y\right),\]
which gives
\[
\begin{split}
&\int_{-\infty}^{\infty}|\mathbb P_x\left(U_t> y\right) -\hat{\mathbb P}_x\left(Y_t> y\right)|dx
\leq \int_{-\infty}^{\infty}\hat{\mathbb P}_x\left(Y_t> y\right)-\mathbb P_x\left(X_t> y\right)dx\\
&=\mathbb E\left[\int_{-\infty}^{\infty}\textbf{1}_{\{y-x+\delta t < X_t \leq y-x\}}dx\right]=-\delta t,
\end{split}
\]
so (3.10) is derived.

Note  that
\[
\sup_{0\leq s \leq t}(X_s-\delta s) - |\delta| t \leq \overline{X}_t \leq \sup_{0\leq s \leq t}(X_s-\delta s) + |\delta| t,
\]
which means that
\[
\mathbb P_x\left(\overline{X}_t - |\delta| t > y\right) \leq \hat{\mathbb P}_x\left(\overline{Y}_t> y\right)\leq \mathbb P_x\left(\overline{X}_t + |\delta| t > y\right).
\]
Hence (3.11) can be proved similarly.
\end{proof}

\begin{Lemma}
For $Re(\phi)=0$,
\begin{equation}
\begin{split}
&\lim_{n \uparrow \infty}\int_{-\infty}^{\infty}e^{-\phi(x-b)}\left(\mathbb P_x\left(U_{e(q_n)}> y\right) -\hat{\mathbb P}_x\left(Y_{e(q_n)}> y\right)\right)dx\\
&=\int_{-\infty}^{\infty}e^{-\phi(x-b)}\left(\mathbb P_x\left(U_{e(q)}> y\right) - \hat{\mathbb P}_x\left(Y_{e(q)}> y\right)\right)dx,
\end{split}
\end{equation}
and for $Re(s)\geq 0$,
\begin{equation}
\begin{split}
\lim_{n \uparrow \infty}\hat{\mathbb E}\left[e^{s \underline{Y}_{e(q_n)}}\right]=\hat{\mathbb E}\left[e^{s \underline{Y}_{e(q)}}\right] \  and \  \lim_{n \uparrow \infty}\mathbb E\left[e^{-s \overline{X}_{e(q_n)}}\right]=\mathbb E\left[e^{-s \overline{X}_{e(q)}}\right].
\end{split}
\end{equation}
\end{Lemma}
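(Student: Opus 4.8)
The plan is to deduce both limits in the lemma from a single elementary fact about exponential densities, combined with the uniform integral estimate already recorded in Lemma 3.1. Since $q_n\downarrow q>0$, for every fixed $t>0$ one has $q_ne^{-q_nt}\to qe^{-qt}$, and since $q_n\ge q$ (so $e^{-q_nt}\le e^{-qt}$) we get, for all large $n$ with $q_n\le q+1$, the bound $q_ne^{-q_nt}\le(q+1)e^{-qt}$. Hence $\bigl|q_ne^{-q_nt}-qe^{-qt}\bigr|\le(2q+1)e^{-qt}$ and $t\,\bigl|q_ne^{-q_nt}-qe^{-qt}\bigr|\le(2q+1)\,t\,e^{-qt}$, both integrable on $(0,\infty)$, so dominated convergence gives
\[
\int_0^\infty\bigl|q_ne^{-q_nt}-qe^{-qt}\bigr|\,dt\ \longrightarrow\ 0
\qquad\text{and}\qquad
\int_0^\infty t\,\bigl|q_ne^{-q_nt}-qe^{-qt}\bigr|\,dt\ \longrightarrow\ 0
\qquad(n\to\infty).
\]
I will use the first of these for (3.14) and the second for (3.13).

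For (3.14) I would condition on the independent clock: writing $g(t):=\hat{\mathbb{E}}\bigl[e^{s\underline{Y}_t}\bigr]$, one has $\hat{\mathbb{E}}\bigl[e^{s\underline{Y}_{e(q_n)}}\bigr]=\int_0^\infty q_ne^{-q_nt}g(t)\,dt$, and the same with $q$ in place of $q_n$. Because $\underline{Y}_t\le Y_0=0$ and $Re(s)\ge0$, we have $|g(t)|=e^{Re(s)\underline{Y}_t}\le1$, so
\[
\Bigl|\hat{\mathbb{E}}\bigl[e^{s\underline{Y}_{e(q_n)}}\bigr]-\hat{\mathbb{E}}\bigl[e^{s\underline{Y}_{e(q)}}\bigr]\Bigr|\ \le\ \int_0^\infty\bigl|q_ne^{-q_nt}-qe^{-qt}\bigr|\,dt\ \longrightarrow\ 0 .
\]
The assertion for $\mathbb{E}\bigl[e^{-s\overline{X}_{e(q_n)}}\bigr]$ is identical, now using $\overline{X}_t\ge\overline{X}_0=0$ together with $Re(s)\ge0$ to get $|e^{-s\overline{X}_t}|\le1$.

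For (3.13) set $h_n(x):=\mathbb{P}_x\bigl(U_{e(q_n)}>y\bigr)-\hat{\mathbb{P}}_x\bigl(Y_{e(q_n)}>y\bigr)$ and $h(x):=\mathbb{P}_x\bigl(U_{e(q)}>y\bigr)-\hat{\mathbb{P}}_x\bigl(Y_{e(q)}>y\bigr)$. Conditioning on the clock and subtracting,
\[
h_n(x)-h(x)=\int_0^\infty\bigl(q_ne^{-q_nt}-qe^{-qt}\bigr)\bigl(\mathbb{P}_x(U_t>y)-\hat{\mathbb{P}}_x(Y_t>y)\bigr)\,dt .
\]
Taking absolute values, integrating in $x$ over $\mathbb{R}$, applying Tonelli's theorem (the integrand is nonnegative) and then estimate (3.10) of Lemma 3.1 to the inner integral, I would obtain
\[
\|h_n-h\|_{L^1(\mathbb{R})}\ \le\ \int_0^\infty\bigl|q_ne^{-q_nt}-qe^{-qt}\bigr|\Bigl(\int_{-\infty}^{\infty}\bigl|\mathbb{P}_x(U_t>y)-\hat{\mathbb{P}}_x(Y_t>y)\bigr|\,dx\Bigr)dt\ \le\ |\delta|\int_0^\infty t\,\bigl|q_ne^{-q_nt}-qe^{-qt}\bigr|\,dt ,
\]
which tends to $0$ by the preliminary observation; the same computation with the single density $q_ne^{-q_nt}$ gives $\|h_n\|_{L^1(\mathbb{R})}\le|\delta|/q_n$ and $\|h\|_{L^1(\mathbb{R})}\le|\delta|/q$, so all the integrals in sight converge absolutely. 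Finally, since $Re(\phi)=0$ forces $|e^{-\phi(x-b)}|=1$,
\[
\Bigl|\int_{-\infty}^{\infty}e^{-\phi(x-b)}h_n(x)\,dx-\int_{-\infty}^{\infty}e^{-\phi(x-b)}h(x)\,dx\Bigr|\ \le\ \|h_n-h\|_{L^1(\mathbb{R})}\ \longrightarrow\ 0 ,
\]
which is exactly (3.13).

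There is no genuine obstacle here: the whole argument is a routine continuity-in-$q$ estimate. The only points that need a little care are the Tonelli interchange of the $x$- and $t$-integrations in the bound for $\|h_n-h\|_{L^1(\mathbb{R})}$, and the domination $e^{-q_nt}\le e^{-qt}$ coming from $q_n\ge q$ — it is precisely this monotonicity that makes the $t$-linear bound of Lemma 3.1 integrable against the exponential density and lets dominated convergence close the proof.
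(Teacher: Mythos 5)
Your proof is correct and rests on the same pillars as the paper's: conditioning on the exponential clock, the uniform bound of Lemma~3.1, and dominated convergence driven by $q_n\downarrow q$. The only organizational difference is that you isolate the $L^1$-convergence of the densities $q_ne^{-q_nt}\to qe^{-qt}$ (with and without the weight $t$) as a preliminary estimate and deduce $\|h_n-h\|_{L^1(\mathbb{R})}\to 0$ directly, whereas the paper first records the pointwise limits $\mathbb{P}_x(U_{e(q_n)}>y)\to\mathbb{P}_x(U_{e(q)}>y)$ and $\hat{\mathbb{P}}_x(Y_{e(q_n)}>y)\to\hat{\mathbb{P}}_x(Y_{e(q)}>y)$ and then applies dominated convergence to the Fubini representation (3.16) using (3.10) as the dominating bound; both routes are the same idea in slightly different packaging, and yours is arguably the tidier of the two.
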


\begin{proof}
Since $q_n > q$ and
$
\mathbb P_x\left(U_{e(q_n)} > y\right)=\int_{0}^{\infty}
q_n e^{-q_n t}\mathbb P_x\left(U_t > y\right)dt,
$
we derive via the dominated convergence theorem that
\begin{equation}
\begin{split}
&\lim_{n \uparrow \infty}\mathbb P_x\left(U_{e(q_n)} > y\right)=\mathbb P_x\left(U_{e(q)} > y\right).
\end{split}
\end{equation}
Similarly, we can prove (3.13) and the following result:
\begin{equation}
\begin{split}
&\lim_{n \uparrow \infty}\hat{\mathbb P}_x\left(Y_{e(q_n)} > y\right)
=\hat{\mathbb P}_x\left(Y_{e(q)} > y\right).
\end{split}
\end{equation}
In addition, it holds that
\begin{equation}
\begin{split}
&\int_{-\infty}^{\infty}e^{-\phi(x-b)}\left(\mathbb P_x\left(U_{e(q_n)}> y\right) - \hat{\mathbb P}_x\left(Y_{e(q_n)}> y\right)\right)dx\\
&= \int_{0}^{\infty} q_n e^{-q_n t}\int_{-\infty}^{\infty}e^{-\phi(x-b)}\left(\mathbb P_x\left(U_t> y\right) -\hat{\mathbb P}_x\left(Y_t> y\right)\right)dx dt.
\end{split}
\end{equation}
From (3.10) and (3.14)--(3.16), the dominated convergence theorem produces (3.12).
\end{proof}

\begin{Lemma}
For $F_1^n(x)$ in (3.9) and $F_1(x)$ in (2.10), it holds that
\begin{equation}
\lim_{n \uparrow \infty} \int_{0}^{\infty}e^{-s x} F_1^n(x)dx=\int_{0}^{\infty}e^{-s x} F_1(x)dx, \ \ Re(s) \geq 0.
\end{equation}
\end{Lemma}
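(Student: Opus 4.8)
Write both Laplace transforms in their closed forms and pass to the limit factor by factor. By (2.10) and (3.9),
\[
\int_{0}^{\infty}e^{-sx}F_1^n(x)\,dx=\frac{1}{s}\left(\frac{A_n(s)}{B_n(s)}-1\right),\qquad
\int_{0}^{\infty}e^{-sx}F_1(x)\,dx=\frac{1}{s}\left(\frac{A(s)}{B(s)}-1\right),
\]
where $A_n(s):=\hat{\mathbb E}\left[e^{-s\overline{Y}_{e(q_n)}}\right]$, $B_n(s):=\mathbb E\left[e^{-s\overline{X}_{e(q_n)}}\right]$, and $A(s),B(s)$ are the same expressions with $q_n$ replaced by $q$. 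Hence it suffices to establish: (a) $A_n(s)\to A(s)$ and $B_n(s)\to B(s)$ for every $s$ with $Re(s)\geq 0$; (b) $B(s)\neq 0$ on $Re(s)\geq 0$; and (c) that one may divide by $s$ in the limit, the only delicate point being $s=0$.

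For (a) I would repeat the dominated convergence argument used in the proof of Lemma 3.3. Writing $A_n(s)=\int_{0}^{\infty}q_n e^{-q_n t}\hat{\mathbb E}\left[e^{-s\overline{Y}_{t}}\right]dt$ and using $\overline{Y}_t\geq 0$, so that $\left|\hat{\mathbb E}\left[e^{-s\overline{Y}_{t}}\right]\right|\leq 1$ whenever $Re(s)\geq 0$, together with $q_n e^{-q_n t}\leq q_1 e^{-q_n t}\leq q_1 e^{-qt}$ (we may assume $q_n\downarrow q$, hence $q\leq q_n\leq q_1$), dominated convergence gives $A_n(s)\to\hat{\mathbb E}\left[e^{-s\overline{Y}_{e(q)}}\right]=A(s)$; the identical reasoning (already half recorded in (3.13)) gives $B_n(s)\to B(s)$. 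For (b), by (2.9) we have $B(s)=\prod_{k=1}^{m^+}\bigl(\tfrac{s+\eta_k}{\eta_k}\bigr)^{m_k}\prod_{k=1}^{M^+}\bigl(\tfrac{\beta_k}{s+\beta_k}\bigr)^{M_k}$, and since $Re(\eta_k)>0$ (Remark 2.1) and $Re(\beta_k)>0$ (see (2.5)), every factor is finite and nonzero for $Re(s)\geq 0$; thus $B(s)\neq 0$ there, and since $B_n(s)\to B(s)\neq 0$, for large $n$ the quotient $A_n(s)/B_n(s)$ is defined and converges to $A(s)/B(s)$.

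For $s\neq 0$ with $Re(s)\geq 0$, formula (3.18) now follows immediately by dividing the convergence in (a)--(b) by $s$ and subtracting $1/s$. The remaining case $s=0$, namely $\int_{0}^{\infty}F_1^n(x)\,dx\to\int_{0}^{\infty}F_1(x)\,dx$, I would handle by computing the constant directly rather than fighting the $0/0$ form: letting $s\downarrow 0$ in (3.9), using (2.15) with $q$ replaced by $q_n$ and an integration by parts, one gets
\[
\int_{0}^{\infty}F_1^n(x)\,dx=\mathbb E\left[\overline{X}_{e(q_n)}\right]-\hat{\mathbb E}\left[\overline{Y}_{e(q_n)}\right],
\]
and likewise $\int_{0}^{\infty}F_1(x)\,dx=\mathbb E\left[\overline{X}_{e(q)}\right]-\hat{\mathbb E}\left[\overline{Y}_{e(q)}\right]$; all four expectations are finite because $\overline{X}_{e(q)}$ and $\overline{Y}_{e(q)}$ have finite exponential moments near $0$ by (2.8)--(2.9). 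Then $\mathbb E\left[\overline{X}_{e(q_n)}\right]=\int_{0}^{\infty}q_n e^{-q_n t}\mathbb E\left[\overline{X}_{t}\right]dt$ with $q_n e^{-q_n t}\mathbb E\left[\overline{X}_t\right]\leq q_1 e^{-qt}\mathbb E\left[\overline{X}_t\right]$ and $\int_{0}^{\infty}q_1 e^{-qt}\mathbb E\left[\overline{X}_t\right]dt=\tfrac{q_1}{q}\mathbb E\left[\overline{X}_{e(q)}\right]<\infty$, so dominated convergence yields $\mathbb E\left[\overline{X}_{e(q_n)}\right]\to\mathbb E\left[\overline{X}_{e(q)}\right]$, and similarly $\hat{\mathbb E}\left[\overline{Y}_{e(q_n)}\right]\to\hat{\mathbb E}\left[\overline{Y}_{e(q)}\right]$; subtracting settles $s=0$.

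I expect the boundary point $s=0$ to be the only real obstacle: the $1/s$ in front of the Wiener--Hopf ratio makes a naive limit indeterminate, so one must either read off the value at $0$ by hand as above, or instead exploit the rational (hence analytic near $s=0$) structure of $A/B$ — the factor $\prod_k\bigl(\tfrac{s+\eta_k}{\eta_k}\bigr)^{m_k}$ cancels between (2.8) and (2.9), and the ratio equals $1$ at $s=0$ — together with continuity of the roots $\beta_k,\hat\beta_k$ of $\psi(z)=q$, $\hat\psi(z)=q$ under $q_n\downarrow q$ (the underlying polynomials having a fixed, $\sigma$-dependent leading coefficient, so their zero sets vary continuously as multisets), to conclude $\left.\tfrac{d}{ds}\bigl(A_n(s)/B_n(s)\bigr)\right|_{s=0}\to\left.\tfrac{d}{ds}\bigl(A(s)/B(s)\bigr)\right|_{s=0}$. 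Everything else in the proof is a routine application of dominated convergence and the elementary facts already assembled.
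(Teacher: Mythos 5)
Your argument is correct and follows essentially the same route as the paper: dominated convergence for the Wiener--Hopf factors on $Re(s)\geq 0$, elementary division for $s\neq 0$, and a separate treatment of $s=0$. The paper's $s=0$ step is identical in substance --- it evaluates the limit of $\tfrac{1}{s}(\hat{\mathbb E}[e^{-s\overline{Y}_{e(q_n)}}]/\mathbb E[e^{-s\overline{X}_{e(q_n)}}]-1)$ by integration by parts to obtain $\int_{0}^{\infty}(\hat{\mathbb P}(\overline{Y}_{e(q_n)}\leq x)-\mathbb P(\overline{X}_{e(q_n)}\leq x))\,dx$, which equals your $\mathbb E[\overline{X}_{e(q_n)}]-\hat{\mathbb E}[\overline{Y}_{e(q_n)}]$ by the tail formula, and then applies dominated convergence using the bound $\frac{|\delta|}{q}$ from Lemma 3.1 (3.11) for domination, whereas you dominate directly via finiteness of the first moment of $\overline{X}_{e(q)}$; both work.
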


\begin{proof}
Due to Remark 2.4, for each $n$, we have
\begin{equation}
\begin{split}
&\int_{0}^{\infty} e^{-s x}F_1^n(x)dx
=\frac{1}{s}\left(\frac{\hat{\mathbb E}\left[e^{-s\overline{Y}_{e(q_n)}}\right]}
{\mathbb E\left[e^{-s \overline{X}_{e(q_n)}}\right]}-1\right), \ \ for \ \ Re(s) \geq 0.
\end{split}
\end{equation}
Similar to the derivation of (3.13), it can be shown that
\begin{equation}
\begin{split}
\lim_{n \uparrow \infty}\hat{\mathbb E}\left[e^{-s \overline{Y}_{e(q_n)}}\right]=\hat{\mathbb E}\left[e^{-s \overline{Y}_{e(q)}}\right], \ \ Re(s) \geq 0.
\end{split}
\end{equation}
Formulas (3.13) and (3.19) lead to
\begin{equation}
\begin{split}
&\lim_{n \uparrow \infty} \int_{0}^{\infty} e^{-s x} F_1^n(x)dx= \int_{0}^{\infty} e^{-s x}F_1(x)dx
=\frac{1}{s}\left(\frac{\hat{\mathbb E}\left[e^{-s \overline{Y}_{e(q)}}\right]}
{\mathbb E\left[e^{-s \overline{X}_{e(q)}}\right]}-1\right),
\end{split}
\end{equation}
which holds for $Re(s) \geq 0$ and $s \neq 0$.

Next, we consider the case of $s=0$.  It follows from (3.9) that
\begin{equation}
\begin{split}
&\int_{0}^{\infty}F_1^n(x)dx
=\lim_{s \downarrow 0} \frac{1}{s}\left(\frac{\hat{\mathbb E}\left[e^{-s \overline{Y}_{e(q_n)}}\right]}
{\mathbb E\left[e^{-s \overline{X}_{e(q_n)}}\right]}-1\right)\\
&=\lim_{s \downarrow 0}\frac{\hat{\mathbb E}\left[e^{-s \overline{Y}_{e(q_n)}}\right]-
\mathbb E\left[e^{-s \overline{X}_{e(q_n)}}\right]}{s}\\
&=\lim_{s \downarrow 0}
\int_{0}^{\infty}e^{-sx}\left(\hat{\mathbb P}\left(\overline{Y}_{e(q_n)} \leq x\right)-\mathbb P\left(\overline{X}_{e(q_n)} \leq x\right)\right)dx\\
&=\int_{0}^{\infty}\left(\hat{\mathbb P}\left(\overline{Y}_{e(q_n)} \leq x\right)-\mathbb P\left(\overline{X}_{e(q_n)} \leq x\right)\right)dx,
\end{split}
\end{equation}
where the third equality is due to the integration by part and the final one follows from the dominated convergence theorem since (see (3.11))
\[
\int_{0}^{\infty}|\hat{\mathbb P}\left(\overline{Y}_{e(q_n)} \leq x\right)-\mathbb P\left(\overline{X}_{e(q_n)} \leq x\right)|dx \leq \int_{0}^{\infty}qe^{-qt} |\delta| t dt=\frac{|\delta|}{q}.
\]

As $q_n > q$ and (3.11) holds, formula (3.21) yields
\[
\begin{split}
&\lim_{n\uparrow \infty}\int_{0}^{\infty}F_1^n(x)dx=\lim_{n\uparrow \infty}\int_{0}^{\infty}q_n e^{-q_n t}\int_{0}^{\infty}\left(\hat{\mathbb P}\left(\overline{Y}_{t} \leq x\right)-\mathbb P\left(\overline{X}_{t} \leq x\right)\right)dx dt \\
&=\lim_{n\uparrow \infty}\int_{0}^{\infty}q e^{-q t}\int_{0}^{\infty}\left(\hat{\mathbb P}\left(\overline{Y}_{t} \leq x\right)-\mathbb P\left(\overline{X}_{t} \leq x\right)\right)dx dt=\int_{0}^{\infty}F_1(x)dx,
\end{split}
\]
which combined with (3.20), leads to (3.17).
\end{proof}

\begin{proof}[Proof of Proposition 3.1 for $q \in \mathbb S^c$]
Since $F_1(x), F_1^n(x) \geq -1$ for $x>0$ (see Lemma 2.2), we can define the following measures
\begin{equation}
M_1^n(x)=\int_{0}^{x}\Big(F_1^n(z)+1\Big)dz \ \ and \ \ M_1(x)=\int_{0}^{x}\Big(F_1(z)+1\Big)dz.
\end{equation}
Formula (3.17) implies that
\begin{equation}
\lim_{n \uparrow \infty} \int_{0}^{\infty} e^{-sx}dM_1^n(x)= \int_{0}^{\infty} e^{-sx}dM_1(x), \  \ s > 0,
\end{equation}
which combined with the continuity theorem for Laplace transform (see Theorem 2a on page 433 of [8]), gives
\begin{equation}
\lim_{n \uparrow \infty} \int_{0}^{x}\Big(F_1^n(z)+1\Big)dz= \int_{0}^{x}\Big(F_1(z)+1\Big)dz, \ \ for \ \ all \ \  x > 0.
\end{equation}

Next, for fixed $z>0$, introduce the following probability distribution functions
\begin{equation}
P_1^n(x)=\frac{\int_{0}^{x}\big(F_1^n(t)+1\big)dt}{\int_{0}^{z}\big(F_1^n(t)+1\big)dt} \ \ and \ \ P_1(x)=\frac{\int_{0}^{x}\big(F_1(t)+1\big)dt}{\int_{0}^{z}\big(F_1(t)+1\big)dt}, \ \ 0<x<z.
\end{equation}
Then, formula (3.24) means that $P_1^n(x)$ converges to $P_1(x)$ in distribution. As a result, $\lim_{n \uparrow \infty} \int_{0}^{z}e^{\phi t} dP_1^n(t)=
\int_{0}^{z}e^{\phi t} dP_1(t)$ for $Re(\phi)=0$, so
\begin{equation}
\lim_{n \uparrow \infty} \frac{\int_{0}^{z} e^{\phi t}\big(F_1^n(t)+1\big)dt}
{\int_{0}^{z} \big(F_1^n(t)+1\big)dt}= \frac{\int_{0}^{z} e^{\phi t} \big(F_1(t)+1\big)dt}
{\int_{0}^{z} \big(F_1(t)+1\big)dt}.
\end{equation}

It follows from (3.24) and (3.26) that
\begin{equation}
\lim_{n \uparrow \infty} \int_{0}^{x} e^{\phi t} F_1^n(t)dt=\int_{0}^{x} e^{\phi t} F_1(t)dt,  \ \ for \ \ any \ \ x > 0,
\end{equation}
which combined with (3.17), yields
\begin{equation}
\lim_{n \uparrow \infty} \int_{0}^{\infty} e^{\phi x} F_1^n(x+y-b)dx=\int_{0}^{\infty}
 e^{\phi x} F_1(x+y-b)dx.
\end{equation}
Therefore, the desired result that (3.1) holds also for $q \in \mathbb S^c$ follows from (3.8) by letting $n \uparrow \infty$ and using (3.12), (3.13) and (3.28).
\end{proof}

\section{Main results}
For the unique strong solution $U$ to (1.1) with $X$ given by (2.1), its probability distribution function is given by Theorems 4.1 and 4.2.

\begin{Theorem}
For $q>0$ and $y \geq b$,
\begin{equation}
\mathbb P_x\left(U_{e(q)}> y\right)=1-K_q(y-x)-\int_{b-x}^{y-x}F_1(y-x-z)K_q(dz),
\end{equation}
where  $K_q(x)$ is the convolution of $\underline{Y}_{e(q)}$ under $\hat{\mathbb P}$ and $\overline{X}_{e(q)}$ under $\mathbb P$, i.e.,
\begin{equation}
K_q(x)=\int_{-\infty}^{\min\{0,x\}}\mathbb P\left(\overline{X}_{e(q)} \leq x-z\right)\hat{\mathbb P}\left(\underline{Y}_{e(q)} \in dz \right), \ \ x \ \in \mathbb R,
\end{equation}
and $F_1(x)$ is continuous and differentiable on $(0,\infty)$ with rational Laplace transform given by (2.10).
\end{Theorem}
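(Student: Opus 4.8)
The plan is to recover (4.1) by Fourier-inverting the identity in Proposition 3.1 and then eliminating the auxiliary quantity $\hat{\mathbb P}_x(Y_{e(q)}>y)$ with the help of the Wiener--Hopf factorization for $Y$ together with the defining relation (2.10) of $F_1$. I fix $q>0$ and first treat $y>b$, deducing the boundary case $y=b$ at the end by continuity.

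First I would put $\phi=i\xi$ with $\xi\in\mathbb R$ in (3.1). Because $\underline Y_{e(q)}\le 0$ and $\overline X_{e(q)}\ge 0$ are independent, $\hat{\mathbb E}[e^{i\xi\underline Y_{e(q)}}]\,\mathbb E[e^{i\xi\overline X_{e(q)}}]=\int_{\mathbb R}e^{i\xi w}\,K_q(dw)$ with $K_q$ the distribution function (4.2), and $\int_0^\infty F_1(x+y-b)e^{i\xi x}\,dx=\int_{\mathbb R}e^{i\xi x}g(x)\,dx$ where $g(x):=F_1(x+y-b)\mathbf 1_{\{x>0\}}$ lies in $L^1(\mathbb R)$ since $F_1$ is absolutely integrable on $(0,\infty)$ (Remark 2.4) and $y\ge b$; hence the right-hand side of (3.1) is the Fourier transform of the convolution $(K_q*g)(u):=\int_{\mathbb R}g(u-w)\,K_q(dw)$. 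On the left-hand side, the substitution $x=b-v$ identifies $\int_{\mathbb R}e^{-i\xi(x-b)}\Phi(x)\,dx$, with $\Phi(x):=\mathbb P_x(U_{e(q)}>y)-\hat{\mathbb P}_x(Y_{e(q)}>y)$, as the Fourier transform of $v\mapsto\Phi(b-v)$; and $\Phi\in L^1(\mathbb R)$ by Lemma 3.1. Two $L^1$ functions with equal Fourier transforms agree a.e., hence everywhere once both are known continuous: $\mathbb P_x(U_{e(q)}>y)$ by Lemma 2.5, $\hat{\mathbb P}_x(Y_{e(q)}>y)=\hat{\mathbb P}(Y_{e(q)}>y-x)$ because $\sigma>0$ makes $Y_{e(q)}$ absolutely continuous, and $K_q*g$ because $K_q$ has a density and $g$ is bounded (Remark 2.7, Remark 2.4). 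Unwinding the convolution and setting $v=b-x$ gives
\[
\mathbb P_x(U_{e(q)}>y)-\hat{\mathbb P}_x(Y_{e(q)}>y)=\int_{-\infty}^{b-x}F_1(y-x-w)\,K_q(dw),\qquad x\in\mathbb R,
\]
where on $\{w<b-x\}\subseteq\{w<y-x\}$ the argument of $F_1$ is positive, so the right-hand side is well defined.

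Next I would compute $\hat{\mathbb P}_x(Y_{e(q)}>y)$. Rearranging (2.10) shows that for $s>0$ the Laplace transform of $x\mapsto\int_{[0,x]}\bigl(1+F_1(x-z)\bigr)\,\mathbb P(\overline X_{e(q)}\in dz)$ equals $\tfrac1s\hat{\mathbb E}[e^{-s\overline Y_{e(q)}}]$, i.e.\ the Laplace transform of $x\mapsto\hat{\mathbb P}(\overline Y_{e(q)}\le x)$; by uniqueness of Laplace transforms (both sides continuous on $[0,\infty)$, using $\sigma>0$) we obtain $\hat{\mathbb P}(\overline Y_{e(q)}\le x)=\int_{[0,x]}(1+F_1(x-z))\,\mathbb P(\overline X_{e(q)}\in dz)$. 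Convolving this with the law of the independent $\underline Y_{e(q)}\le 0$ and changing variables $w=z+z'$ gives, for every $t\in\mathbb R$,
\[
\hat{\mathbb P}\bigl(\overline Y_{e(q)}+\underline Y_{e(q)}\le t\bigr)=\int_{-\infty}^{t}\bigl(1+F_1(t-w)\bigr)\,K_q(dw)=K_q(t)+\int_{-\infty}^{t}F_1(t-w)\,K_q(dw),
\]
where the left-hand side denotes the distribution function of the independent sum; by the Wiener--Hopf factorization (Lemma 2.6 applied to $Y$, which is not a compound Poisson process since $\sigma>0$) it equals $\hat{\mathbb P}(Y_{e(q)}\le t)$. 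Taking $t=y-x$ yields $\hat{\mathbb P}_x(Y_{e(q)}>y)=1-K_q(y-x)-\int_{-\infty}^{y-x}F_1(y-x-z)\,K_q(dz)$; substituting into the display of the previous paragraph, the two integrals over $(-\infty,b-x]$ and $(-\infty,y-x]$ combine (here $b-x\le y-x$ because $y\ge b$) into one over $[b-x,y-x]$, which is exactly (4.1) for $y>b$. For $y=b$ one lets $y\downarrow b$: $\mathbb P_x(U_{e(q)}>y)\uparrow\mathbb P_x(U_{e(q)}>b)$, $K_q$ is continuous, and the integral over the shrinking interval $[b-x,y-x]$ tends to $0$.

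The step I expect to require the most care is the Fourier inversion in the second paragraph: one must confirm the $L^1$ bound coming from Lemma 3.1 and verify that both sides are genuinely continuous — which rests on $\sigma>0$ together with the density statements in Lemma 2.5 and Remark 2.7 — before upgrading a.e.\ equality to pointwise equality, and one must keep track of the support conditions guaranteeing that every appearance of $F_1$ has a strictly positive argument. The convolution identity in the third paragraph needs only the analogous but easier facts (one-sided Laplace uniqueness and a Fubini/change-of-variables step) and should cause no difficulty.
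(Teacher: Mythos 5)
Your proposal is correct and follows essentially the same route as the paper's own proof: both invert the characteristic-function identity of Proposition~3.1 (with the continuity of both sides, supplied by Lemma~2.5 and the absolute continuity coming from $\sigma>0$, used to upgrade a.e.\ equality to pointwise equality and arrive at the paper's (4.4)), then rewrite $\hat{\mathbb P}_x(Y_{e(q)}>y)$ through $K_q$ and $F_1$, and finally subtract. The only difference is cosmetic: the paper obtains the intermediate identity $\hat{\mathbb P}(Y_{e(q)}\le t)=K_q(t)+\int_{-\infty}^{t}F_1(t-w)\,K_q(dw)$ by an algebraic Fourier manipulation (their (4.5)--(4.6)), whereas you read (2.10) as a one-sided Laplace relation, invert it to $\hat{\mathbb P}(\overline Y_{e(q)}\le x)=\int_{[0,x]}(1+F_1(x-z))\,\mathbb P(\overline X_{e(q)}\in dz)$, and then convolve with $\underline Y_{e(q)}$ using the Wiener--Hopf factorization for $Y$ — the same computation, packaged a bit more probabilistically.
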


\begin{proof}
First, the right-hand side of (3.1) can be rewritten as
\begin{equation}
\begin{split}
&\int_{-\infty}^{\infty}e^{\phi x} \int_{-\infty}^{x}F_1(x-z+y-b)dK_q(z)dx,
\end{split}
\end{equation}
where $K_q(x)$ is given by (4.2). Since $\overline{X}_{e(q)}$ and $\underline{Y}_{e(q)}$ have  density functions (see Remark 2.7) and $F_1(x)$ is continuous on $(0,\infty)$, it is concluded that the integrand in (4.3), i.e., $\int_{-\infty}^{x}F_1(x-z+y-b)dK_q(z)$,  is continuous with respect to $x$.

As $Y_{e(q)}$ is the convolution of $\overline{Y}_{e(q)}$ and $\underline{Y}_{e(q)}$ (see Lemma 2.6), $\hat{\mathbb P}_x\left(Y_{e(q)} > y\right)$ is continuous with respect to $x$. This result and  Lemma 2.5 lead to that $\mathbb P_x\left(U_{e(q)}> y\right) -\hat{\mathbb P}_x\left(Y_{e(q)}> y\right)$ is also continuous. Therefore, for $y > b$, it follows from (3.1) and (4.3) that
\begin{equation}
\mathbb P_x\left(U_{e(q)}> y\right)-\hat{\mathbb P}_x\left(Y_{e(q)}> y\right)=\int_{-\infty}^{b-x}F_1(y-x-z)dK_q(z), \ \ x \in \mathbb R.
\end{equation}
In addition, for $Re(\phi)=0$, we have (recall (4.2) and Remark 2.4)
\begin{equation}
\begin{split}
\int_{-\infty}^{\infty}e^{\phi x}dK_q(x) \int_{0}^{\infty}e^{\phi x}F_1(x)dx
&=\frac{1}{\phi}\left(1-
\frac{\hat{\mathbb E}\left[e^{\phi \overline{Y}_{e(q)}}\right]}{\mathbb E\left[e^{\phi \overline{X}_{e(q)}}\right]}\right)\int_{-\infty}^{\infty}e^{\phi x}dK_q(x)\\
&=\frac{1}{\phi}\left(\int_{-\infty}^{\infty}e^{\phi x}dK_q(x)-\hat{\mathbb E}\left[e^{\phi Y_{e(q)}}\right]\right)\\
&=\int_{-\infty}^{\infty}e^{\phi x}\left(\hat{\mathbb P}\left(Y_{e(q)} \leq x\right)-K_q(x)\right)dx,
\end{split}
\end{equation}
where the second equality is due to Lemma 2.6 and the third one follows from the application of integration by parts. Note that
\[
\begin{split}
&\int_{-\infty}^{\infty}|\hat{\mathbb P}\left(Y_{e(q)} \leq x\right)-K_q(x)|dx\\
&\leq\int_{-\infty}^{\infty}\int_{-\infty}^{\min\{0,x\}}|\hat{\mathbb P}\left(\overline{Y}_{e(q)}\leq x-z\right)
-\mathbb P\left(\overline{X}_{e(q)}\leq x-z\right)|
\hat{\mathbb P}\left(\underline{Y}_{e(q)}\in dz\right)dx\\
&\leq\int_{-\infty}^{\infty}\int_{-\infty}^{0}|\hat{\mathbb P}\left(\overline{Y}_{e(q)}\leq x-z\right)
-\mathbb P\left(\overline{X}_{e(q)}\leq x-z\right)|
\hat{\mathbb P}\left(\underline{Y}_{e(q)}\in dz\right)dx\leq \frac{|\delta|}{q},
\end{split}
\]
where in the first inequality, we have used Lemma 2.6 and (4.2); the final inequality follows from (3.11).

For $x \in \mathbb R$, formula (4.5) gives
\begin{equation}
\begin{split}
\int_{-\infty}^{y-x}F_1(y-x-z)dK_q(z)
&=\hat{\mathbb P}\left(Y_{e(q)} \leq y - x\right)- K_q(y - x),
\end{split}
\end{equation}
which combined with (4.4), leads to
\begin{equation}
\begin{split}
&\mathbb P_x\left(U_{e(q)}> y\right)-\hat{\mathbb P}_x\left(Y_{e(q)}> y\right)\\
&=\hat{\mathbb P}\left(Y_{e(q)} \leq y - x\right)- K_q(y - x)- \int_{b-x}^{y-x}F_1(y-x-z)dK_q(z).
\end{split}
\end{equation}
This proves that (4.1) holds for $y > b $.
Letting $y \downarrow b$ in (4.7) and using that $\lim_{y \downarrow b}\int_{b-x}^{y-x}F_1(y-x-z)K_q(dz)=0$
(since $F_1(x)$ is bounded on $(0,\infty)$; see Remark 2.4) deduce that (4.1) holds also  for $y = b $.
\end{proof}

Similar derivations will lead to the following Theorem 4.2, and for the sake of brevity, the details are omitted.

\begin{Theorem}
 For $q>0$ and $y \leq  b$,
\begin{equation}
\mathbb P_x\left(U_{e(q)}< y\right)=K_q(y-x)-\int_{y-x}^{b-x}F_2(y-x-z)K_q(dz),
\end{equation}
where
$F_2(x)$ is continuous and differentiable on $(-\infty, 0)$ and its Laplace transform is given by
\begin{equation}
\begin{split}
&\int_{-\infty}^{0} e^{sz}F_2(z)dz
=\frac{1}{s}\left(\frac{\mathbb E\left[e^{s \underline{X}_{e(q)}}\right]}
{\hat{\mathbb E}\left[e^{s \underline{Y}_{e(q)}}\right]}-1\right), \ \ s > 0.
\end{split}
\end{equation}
\end{Theorem}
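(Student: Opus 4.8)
\emph{Proof proposal for Theorem~4.2.} The plan is to run the whole of Section~3 and the proof of Theorem~4.1 through the dual picture. As noted in the Introduction, $U_t=X_t-\delta t-(-\delta)\int_0^t\textbf{1}_{\{U_s<b\}}ds$, so $-U$ is again a refracted process of the type studied here: it is driven by $-(X-\delta t)=-Y$ (whose jumps have densities of the form (2.2)--(2.3), with the positive and negative parts interchanged), refracted at the level $-b$, with associated auxiliary process $(-Y)-\delta t=-X$. Moreover the dual process has the same exceptional set $\mathbb S^{c}$, because $\psi_{-X}(z)=\psi(-z)$ and $\psi_{-Y}(z)=\hat\psi(-z)$. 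Applying Proposition~2.1 to $-U$ and using $\{(-U)_{e(q)}>-y\}=\{U_{e(q)}<y\}$ then yields, for $q\in\mathbb S$ and $y<b$, an explicit piecewise-exponential expression for $\mathbb P_x(U_{e(q)}<y)$, valid separately on $x\ge b$, $y\le x\le b$ and $x\le y$, in terms of the roots supplied by Lemmas~2.1 and 2.4.

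Next I would prove the analogue of Proposition~3.1: for $y<b$, $q>0$ and $Re(\phi)=0$,
\[
\int_{-\infty}^{\infty}e^{-\phi(x-b)}\Big(\mathbb P_x(U_{e(q)}<y)-\mathbb P_x(X_{e(q)}<y)\Big)\,dx
=\hat{\mathbb E}\big[e^{\phi\underline{Y}_{e(q)}}\big]\,\mathbb E\big[e^{\phi\overline{X}_{e(q)}}\big]\int_{-\infty}^{0}F_2(x+y-b)\,e^{\phi x}\,dx.
\]
The comparison process is now $X$ (rather than $Y$), because below $b$ the paths of $U$ agree with those of $X$; the monotonicity $Y_t\le U_t\le X_t$ for $\delta>0$ (and the reverse for $\delta<0$), together with Fubini, gives $\int_{-\infty}^{\infty}|\mathbb P_x(U_{e(q)}<y)-\mathbb P_x(X_{e(q)}<y)|\,dx\le|\delta|/q$, the analogue of Lemma~3.1. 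For $q\in\mathbb S$ the identity follows from the explicit formula of the first step by exactly the manipulations behind (3.2)--(3.5): expand $\int e^{-\phi(x-b)}\,d\mathbb P_x(U_{e(q)}<y)$ into partial fractions, fold the boundary term into $\mathbb E[e^{\phi X_{e(q)}}]$ via the Wiener--Hopf factorisation (Lemma~2.6) for $X$, isolate a finite exponential sum $F_0$, and verify that $\int_{-\infty}^0 e^{sz}F_0(z)\,dz=\frac1s\big(\mathbb E[e^{s\underline{X}_{e(q)}}]/\hat{\mathbb E}[e^{s\underline{Y}_{e(q)}}]-1\big)$, so that $F_0=F_2$ by uniqueness of Laplace transforms. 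For $q\in\mathbb S^{c}$ one repeats Subsection~3.2 line by line: pick $q_n\in\mathbb S$ with $q_n\downarrow q$; use the dual of Lemma~2.2, namely $F_2(z)+1\ge0$ on $(-\infty,0)$---proved by the complete-monotonicity argument of Lemma~2.2 applied to the first passage of $Y$ below a level and to the factor $\mathbb E[e^{s\underline{X}_{e(q)}}]$---to form the finite measures with densities $F_2^n+1$ and $F_2+1$; invoke the continuity theorem for Laplace transforms; renormalise to probability laws to carry the limit inside $\int e^{\phi z}F_2^n\,dz$; and combine with the dominated-convergence arguments of Lemmas~3.2--3.3 to pass to the limit in the whole identity.

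Finally I would deduce (4.8) from this identity just as (4.1) was deduced from Proposition~3.1. Rewriting the right-hand side above as $\int_{-\infty}^\infty e^{\phi x}\big(\int_{t>x}F_2(x-t+y-b)\,dK_q(t)\big)\,dx$ with $K_q$ as in (4.2), the right-hand integrand is continuous in $x$ (since $\overline{X}_{e(q)}$ and $\underline{Y}_{e(q)}$ have densities and $F_2$ is continuous), and the left-hand integrand is continuous in $x$ by the dual of Lemma~2.5 and the fact that $X_{e(q)}$ has a density; Fourier inversion (legitimate by the $L^1$ bound above) then gives
\[
\mathbb P_x(U_{e(q)}<y)-\mathbb P_x(X_{e(q)}<y)=\int_{b-x}^{\infty}F_2(y-x-t)\,dK_q(t).
\]
The analogue of (4.5)--(4.6), obtained in the same way from the Laplace transform of $F_2$ and $X_{e(q)}\stackrel{d}{=}\overline{X}_{e(q)}+\underline{X}_{e(q)}$, reads $\mathbb P_x(X_{e(q)}<y)=K_q(y-x)-\int_{y-x}^{\infty}F_2(y-x-t)\,dK_q(t)$; adding the two relations and using $y<b$ to merge the tail integrals into $\int_{y-x}^{b-x}$ yields (4.8) for $y<b$. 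The case $y=b$ then follows by letting $y\uparrow b$, using left-continuity of $t\mapsto\mathbb P(U_{e(q)}<t)$ and boundedness of $F_2$ near $0^-$ (the dual of Remark~2.4) to kill the vanishing integral.

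The step I expect to be the main obstacle is the first one: reproducing the (Appendix-level) proof of Proposition~2.1 in the dual setting, or---more cheaply---checking carefully that rewriting (1.1) in the (1.2)-form together with the substitution $X\mapsto-X$, $b\mapsto-b$ really places $-U$ inside the exact hypotheses of Proposition~2.1, so that its conclusion and the Appendix identities feeding (3.2)--(3.5) may be quoted verbatim. Everything downstream is a sign-by-sign transcription of Sections~3 and 4; the one genuinely new observation is that the convolution kernel $K_q$ stays \emph{the same} as in Theorem~4.1, while the comparison process and the Laplace-transform data rotate from $\big(Y,\overline{Y}_{e(q)},\overline{X}_{e(q)}\big)$ to $\big(X,\underline{X}_{e(q)},\underline{Y}_{e(q)}\big)$.
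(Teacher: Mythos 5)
Your proposal is correct and is exactly the dualization the paper has in mind when it says ``similar derivations'' and omits the details; your reading of the comparison process, of the rotated Wiener--Hopf data, and of the fact that the kernel $K_q$ reappears unchanged (since $1-K_q^{\mathrm{dual}}(x-y)=K_q(y-x)$ when the factors have densities) all check out, and the reduction to the dual statements (4.5)--(4.6) with $X$ in place of $Y$ gives (4.8) after splitting the tail integral at $b-x$. The obstacle you flag at the end is not a genuine one: writing $U_t=Y_t+\delta\int_0^t\textbf{1}_{\{U_s\le b\}}ds$ shows that $-U$ solves the dual equation up to the set $\{U_s=b\}$, and because $\sigma>0$ this set has zero occupation time a.s.\ (conditioning on the jump path, $U_s$ has a density for each $s>0$, independently of the yet-to-be-proved Remark~4.1), so $-U$ does fall squarely within the hypotheses of Proposition~2.1 for the dual data $(-Y,-X,-b)$ and the entire machinery of Sections~3--4 can be quoted verbatim.
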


\begin{Remark}
It follows from (4.1) and (4.8) that
\[
\mathbb P_x\left(U_{e(q)} > b\right)+ \mathbb P_x\left(U_{e(q)} < b\right)=1,
\]
which implies that $\mathbb P_x\left(U_{e(q)} = b\right)=0$ for all $q>0$.
\end{Remark}

\begin{Remark}
For fixed $b \in \mathbb R$, letting $y=b$ in (4.1), we arrive at $
\mathbb P_x\left(U_{e(q)}> b\right)=1- K_q(b - x)
$, which means that
\[
\int_{-\infty}^{\infty}e^{-\phi(x-b)}d\left(\mathbb P_x\left(U_{e(q)} > b\right)\right)=\hat{\mathbb E}\left[e^{\phi\overline{Y}_{e(q)}}\right]\mathbb E\left[e^{\phi\underline{X}_{e(q)}}\right].
\]
A similar result has already been derived in [22]; see (4.9) in that paper.
\end{Remark}

\begin{Remark}
Compared with (2.20), in (4.1), (4.2), (4.8) and (4.9), the roots of $\psi(z)=q$ and $\hat{\psi}(z)=q$ disappear. The forms of these results and Remark 2.2 give us the following conjecture: formulas (4.1) and (4.8) hold for a general L\'evy process $X$ and the corresponding solution $U$ (if exists) to (1.1). Proving  this conjecture is a
potential research direction.
\end{Remark}

Since both $F_1(x)$ and $F_2(x)$ are differentiable, from (4.1) and (4.8), the expression of $\mathbb P_x\left(U_{e(q)} \in  dy\right)$ can be derived.

\begin{Corollary}
\begin{equation}
\begin{split}
&\mathbb P_x\left(U_{e(q)} \in  dy\right)=q\int_{0}^{\infty}e^{-q t}\mathbb P_x\left(U_{t} \in  dy\right)dt=\\
&\left\{\begin{array}{cc}
(F_1(0)+1)K_q(dy-x)+\int_{b-x}^{y-x}F_1^{\prime}(y-x-z)K_q(dz)dy,&y > b,\\
(F_2(0)+1)K_q(dy-x)- \int_{y-x}^{b-x}F_2^{\prime}(y-x-z)K_q(dz)dy, & y < b,
\end{array}\right.
\end{split}
\end{equation}
where $F_1(0)$ is given by (2.12), $F_2(0):=\lim_{x \uparrow 0}F_2(x)$ and moreover $F_2(0)=F_1(0)$.
\end{Corollary}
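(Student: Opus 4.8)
The plan is to read off (4.11) by differentiating the two cumulative formulas (4.1) and (4.8) in $y$. First I would record the regularity of $K_q$: by (2.9) the transform $\mathbb{E}[e^{-s\overline{X}_{e(q)}}]$ is a proper rational function of $s$, so $\overline{X}_{e(q)}$ has a density which is a finite combination of terms $x^{j}e^{-\beta_{k}x}$, hence continuous on $[0,\infty)$ and without atom at $0$; consequently the convolution $K_q$ in (4.2) is absolutely continuous with a continuous density $k_q$, and I would write $K_q(dy-x)=k_q(y-x)\,dy$. Also, by (4.1), (4.8), Lemma 2.5 and the continuity of $y\mapsto\hat{\mathbb{P}}_x(Y_{e(q)}>y)$, the maps $y\mapsto\mathbb{P}_x(U_{e(q)}>y)$ and $y\mapsto\mathbb{P}_x(U_{e(q)}<y)$ are continuous, while $\mathbb{P}_x(U_{e(q)}=b)=0$ by Remark 4.1; so $\mathbb{P}_x(U_{e(q)}\in dy)$ has no atoms and equals the $y$-derivative of these maps (and the first equality in (4.11) is just the definition of $e(q)$).

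For $y>b$ I would differentiate the right side of (4.1). Here $\partial_y K_q(y-x)=k_q(y-x)$, and for the last term I would use the Leibniz rule on $\int_{b-x}^{y-x}F_1(y-x-z)k_q(z)\,dz$: the variable upper limit gives the boundary term $F_1(0)\,k_q(y-x)$ and the $y$ inside the integrand gives $\int_{b-x}^{y-x}F_1'(y-x-z)k_q(z)\,dz$. This is legitimate because $F_1$ is $C^1$ on $(0,\infty)$ with finite limit $F_1(0)$ (see (2.12)) and $k_q$ is continuous. Negating produces the $y>b$ line of (4.11) with factor $1+F_1(0)$. For $y<b$ I would repeat the computation on (4.8); now $y$ sits in the \emph{lower} limit of the integral, so the Leibniz boundary term is $-F_2(0)\,k_q(y-x)$, and combining with $\partial_y K_q(y-x)=k_q(y-x)$ yields the $y<b$ line with factor $1+F_2(0)$ (existence of $F_2(0)=\lim_{z\uparrow 0}F_2(z)$ being immediate from the rational transform (4.9)). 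This proves (4.11) except for the asserted equality $F_2(0)=F_1(0)$.

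To establish $F_1(0)=F_2(0)$ I would argue as in (2.12): multiplying (2.10) by $s$ and letting $s\uparrow\infty$ gives $F_1(0)+1=\lim_{s\uparrow\infty}\hat{\mathbb{E}}[e^{-s\overline{Y}_{e(q)}}]/\mathbb{E}[e^{-s\overline{X}_{e(q)}}]=\prod_k(\hat{\beta}_k)^{\hat{M}_k}/\prod_k(\beta_k)^{M_k}$ by (2.8)--(2.9); likewise, from (4.9) and (2.17)--(2.18), $F_2(0)+1=\lim_{s\uparrow\infty}\mathbb{E}[e^{s\underline{X}_{e(q)}}]/\hat{\mathbb{E}}[e^{s\underline{Y}_{e(q)}}]=\prod_k\gamma_k/\prod_k\hat{\gamma}_k$ (products with multiplicities if $q\in\mathbb{S}^c$). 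To see the two quantities agree I would pass to the numerator polynomial $P$ of the rational function $\psi(z)-q$, writing $\psi(z)-q=P(z)/D(z)$ with $D(z)=\prod_k(\eta_k-iz)^{m_k}\prod_k(\vartheta_k+iz)^{n_k}$: by Lemmas 2.1 and 2.4 the zeros of $P$ are exactly the $-i\beta_k$ (total multiplicity $1+\sum_k m_k$) and the $i\gamma_k$ (total multiplicity $1+\sum_k n_k$), its leading coefficient comes only from the term $-\frac{\sigma^2}{2}z^2$, and $P(0)=(\psi(0)-q)D(0)=-q\prod_k\eta_k^{m_k}\prod_k\vartheta_k^{n_k}$ since $\psi(0)=0$. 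The numerator $\hat{P}$ of $\hat{\psi}(z)-q=\psi(z)-i\delta z-q$ has the \emph{same} $D$, the \emph{same} leading coefficient (the term $-i\delta z$ is of lower order), the \emph{same} value $\hat{P}(0)=-q\prod_k\eta_k^{m_k}\prod_k\vartheta_k^{n_k}$, and zeros $-i\hat{\beta}_k$, $i\hat{\gamma}_k$ of the same total multiplicities. Comparing constant term and leading coefficient via Vieta for $P$ and $\hat{P}$, and noting that the attendant powers of $i$ match, gives $\prod_k(\beta_k)^{M_k}\prod_k\gamma_k=\prod_k(\hat{\beta}_k)^{\hat{M}_k}\prod_k\hat{\gamma}_k$, i.e. $F_1(0)+1=F_2(0)+1$. (Alternatively, dividing the Wiener--Hopf factorizations of Lemma 2.6 for $X$ and for $Y$ and letting the transform argument tend to infinity gives the same identity, since $-i\delta z$ is dominated by $-\frac{\sigma^2}{2}z^2$.)

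I expect the only genuinely non-mechanical point to be this identity $F_1(0)=F_2(0)$ --- morally, that the refraction rate $\delta$ changes neither the diffusion coefficient nor the constant term of the relevant characteristic polynomial --- together with the minor technical verification that $k_q$ is regular enough to license the Leibniz differentiations; the rest is routine.
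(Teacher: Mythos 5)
Your proof is correct and follows essentially the same route as the paper: differentiate (4.1) and (4.8) in $y$ via the Leibniz rule (the paper states this step more tersely and leaves the regularity of $k_q$ implicit), then prove $F_1(0)=F_2(0)$ by comparing products of roots. Your Vieta-style argument is just a reformulation of the paper's computation, which factors $\hat\psi(z)-q$ as $-\tfrac{\sigma^2}{2}$ times the product of linear factors $(\hat\beta_k-iz)$, $(\hat\gamma_k+iz)$ over $D(z)$ and then evaluates at $z=0$ using $\psi(0)=\hat\psi(0)=0$ — equivalent to your matching of constant term and ($\delta$-independent) leading coefficient.
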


\begin{proof}
In (4.1) and (4.8), differentiating with respect to $y$ yields (4.10). Noting that $\mathbb P\left(U_{e(q)}=b\right)=0$ (see Remark 4.1), we can write $y > b$ or $y < b$ in (4.11) as  $y \geq b$ or $y \leq b$. An interesting conclusion is that $F_2(0)=F_1(0)$, which will be proved in the following.

For simplicity, we only consider $q \in \mathbb S$ since the case of $q \in \mathbb S^c$ can be shown  similarly.

It follows from (2.17) and (4.9) that
\begin{equation}
F_2(0)= \frac{\prod_{k=1}^{N^-}\gamma_k}{\prod_{k=1}^{\hat{N}^-}\hat{\gamma}_k}-1.
\end{equation}
For $q \in \mathbb S$, Lemma 2.1 (i) and Lemma 2.4 (i) give
\[
\hat{\psi}(z)- q = \psi(z)-i\delta z -q = -\frac{\sigma^2}{2}\frac{\prod_{k=1}^{\hat{M}^+}(\hat{\beta}_k-iz)\prod_{k=1}^{\hat{N}^-}(\hat{\gamma}_k+iz)}
{\prod_{k=1}^{n^-}(\vartheta_k+iz)^{n_k}\prod_{k=1}^{m^+}(\eta_k-iz)^{m_k}},
\]
which combined with the fact that $\psi(0)=0$, produces
\[
q = \frac{\sigma^2}{2}\frac{\prod_{k=1}^{\hat{M}^+}\hat{\beta}_k\prod_{k=1}^{\hat{N}^-}\hat{\gamma}_k}
{\prod_{k=1}^{n^-}(\vartheta_k)^{n_k}\prod_{k=1}^{m^+}(\eta_k)^{m_k}}.
\]
Similarly, we can prove
\[
q = \frac{\sigma^2}{2}\frac{\prod_{k=1}^{M^+}\beta_k\prod_{k=1}^{N^-}\gamma_k}
{\prod_{k=1}^{n^-}(\vartheta_k)^{n_k}\prod_{k=1}^{m^+}(\eta_k)^{m_k}}.
\]
Therefore,
\begin{equation}
\frac{\prod_{k=1}^{\hat{M}^+}\hat{\beta}_k}{\prod_{k=1}^{M^+}\beta_k}
=\frac{\prod_{k=1}^{N^-}\gamma_k}{\prod_{k=1}^{\hat{N}^-}\hat{\gamma}_k},
\end{equation}
and the desired result follows from (2.12), (4.11) and (4.12).
\end{proof}

\begin{Remark}
For a more general L\'evy process $X$, the two functions $F_1(x)$ and $F_2(x)$ given respectively  by (2.10) and (4.9)
may not be differentiable. Thus, it is better to understand (4.10) as
\[
\begin{split}
&\mathbb P_x\left(U_{e(q)} \in  dy\right)=\\
&\left\{\begin{array}{cc}
\big(F_1(0)+1\big)K_q(dy-x)+ \int_{b-x}^{y-x}F_1(dy-x-z)K_q(dz),&y > b,\\
\big(F_2(0)+1\big)K_q(dy-x)- \int_{y-x}^{b-x}F_2(dy-x-z)K_q(dz), & y < b.
\end{array}\right.
\end{split}
\]
\end{Remark}

\section{Applications in pricing Variable Annuities}
As stated in the introduction, our results can be used to price Variable Annuities (VAs) with state-dependent fees. First of all, we give some backgrounds.

VAs are life insurance products whose benefits are linked to the performance of a reference portfolio with guaranteed minimum returns. There are many kinds of guarantees such as Guaranteed Minimum Death Benefits (GMDBs) and Guaranteed Minimum Maturity Benefits (GMMBs), and the reader is referred to [3] for more details. Of course, the guaranteed benefits are not free. Traditionally, the corresponding fees are deducted at a fixed rate from the policyholder's account. This classical fee charging method has some disadvantages, which have been noted by [4]. Thus in [4], the authors proposed a new fee deducting approach under which only when the policyholder's account value is lower than a pre-specified level can the insurer charge fees. For more details and researches on this new method, we refer to [4,7,17,24,25].

Let $S_t$ and $F_t$ represent respectively the time-t value of the reference portfolio and the policyholder's account. Under the state-dependent fee structure, we have (see (1) in [4] or (2.3) in [24])
\begin{equation}
dF_t= F_{t-}\frac{d S_t}{S_{t-}}-(-\delta) F_{t-} \textbf{1}_{\{F_{t-} < B\}}dt ,\ \ t>0,
\end{equation}
where $-\delta >0$ is the fee rate and $B$ is a pre-specified level. Note that the case of $B=\infty$ corresponds to the classical fee charging method. Furthermore, assume that
\[S_t=S_0e^{X_t-\delta t},\]
with $X_t$ given by (2.1).

For a VA with GMMBs, its payoff can be written as $G(F_{T})$, where $T$ is the maturity and $G(\cdot)$ is a payoff function. For a VA with GMDBs, its payment when the policyholder dies is given by $G(F_{T_x})$, where $T_x$ is the time of the death of the insured. A simple example of $G(\cdot)$ is $G(x)=\max\{x,K\}$, where $K$ is a constant. In order to price VAs with GMMBs or GMDBs, we need to compute the following expectations under an equivalent martingale measure:
\begin{equation}
\mathbb E\left[e^{- rT}G(F_{T})\right] \ \ or \ \ \mathbb E\left[e^{- rT_x}G(F_{T_x})\right],
\end{equation}
where $r>0$ denotes the continuously compounded constant risk-free rate.

As the market is incomplete, an equivalent martingale measure should be chosen to
calculate (5.2). Similar to [24], we use the Cram\'er-Esscher transform (see [9]) to obtain the wanted martingale measure. In specific, define first
\[
\frac{d \mathbb P^c}{\mathbb P}=\frac{e^{c X_t}}{\mathbb E\left[e^{c X_t}\right]},
\]
where $c \in \mathbb R$ such that $\mathbb E\left[e^{c X_t}\right]< \infty$. And for convenience, in (2.2) and (2.3), we assume that $\eta_1$ ($\vartheta_1$) has the smallest real part
among $\eta_1$, $\ldots$, $\eta_{m^+}$ ($\vartheta_1$, $\ldots$, $\vartheta_{n^-}$). As $S_t=S_0e^{X_t-\delta t}$, it is reasonable to require that $\mathbb E\left[e^{X_t}\right]<\infty$, this means that $\eta_1>1$ in (2.2). Note that $\lim_{c\uparrow \eta_1}\mathbb E\left[e^{c X_t}\right]=\infty$ and $\lim_{c\downarrow -\vartheta_1}\mathbb E\left[e^{c X_t}\right]=\infty$. We can choose $c^{*}$ such that $e^{-r t}S_t$ is a martingale under $\mathbb P^{c^*}$. It is obvious that $X_t$ is still a L\'evy process under $\mathbb P^{c^*}$, and in particular, the process $X$ has the same form as (2.1) under $\mathbb P^{c^*}$. So we drop the superscript $c^*$ from $\mathbb P^{c^*}$ and assume that the expectations appeared in the following are calculated under the equivalent martingale measure $\mathbb P^{c^*}$.

For a VA with GMDBs, its price is $\mathbb E\left[e^{- rT_x}G(F_{T_x})\right]$. Applying similar discussions presented in [24] (see the derivation of (2.9) in that paper), we obtain that the computation of $\mathbb E\left[e^{- rT_x}G(F_{T_x})\right]$ reduces to calculate $\mathbb E\left[e^{- r e(q)}G(F_{e(q)})\right]=\frac{q}{r+q}\mathbb E\left[G(F_{e(r+q)})\right]$ for given $q>0$. For a VA with GMDBs, we note that
\[
\int_{0}^{\infty}e^{-sT} \mathbb E\left[e^{- rT}G(F_{T})\right]dT=\frac{1}{s+r}\mathbb E\left[G(F_{e(s+r)})\right],
\]
from which $\mathbb E\left[e^{- rT}G(F_{T})\right]$ can be obtained by using a numerical Laplace inversion technique.

In summary, the key step to price a VA with GMDBs or GMMBs is deriving the expression of $\mathbb E\left[G(F_{e(q)})\right]$ for $q>0$.

From (5.1), applying It$\hat{o}$'s formula gives
$F_t = F_0e^{U_t}$ with
\begin{equation}
dU_t = d(X_t-\delta t) - (-\delta) \textbf{1}_{\{U_t < b\}}dt=dX_t-\delta \textbf{1}_{\{U_t > b\}}dt,
\end{equation}
where $b = \ln\left(\frac{B}{F_0}\right)$. So we arrive at
\begin{equation}
\mathbb E\left[G(F_{e(q)})\right]=\mathbb E\left[G(F_{0}e^{U_{e(q)}})\right]=\int_{-\infty}^{\infty}G(F_0e^y) \mathbb P\left(U_{e(q)} \in dy \right).
\end{equation}
It follows from (1.1), (4.10) and (5.3) that
\begin{equation}
\begin{split}
&\mathbb P\left(U_{e(q)} \in  dy\right)=
\left\{\begin{array}{cc}
(F_1(0)+1)K_q(dy)+\int_{b}^{y}F_1^{\prime}(y-z)K_q(dz)dy,&y > b,\\
(F_2(0)+1)K_q(dy)- \int_{y}^{b}F_2^{\prime}(y-z)K_q(dz)dy, & y < b,
\end{array}\right.
\end{split}
\end{equation}
where $F_1(x)$, $F_2(x)$ and $K_q(x)$ are given respectively by (2.10), (4.9) and (4.2).

By applying rational expansion, we can obtain semi-explicit expressions for $F_1(x)$, $F_2(x)$ and $K_q(x)$ and thus for $\mathbb P\left(U_{e(q)} \in  dy\right)$. However, for $q \in \mathbb S^c$, formulas for $\mathbb P\left(U_{e(q)} \in  dy\right)$ are very long and complicated, and more importantly, they are difficult to be used in numerical computations since we need to handle multiple roots. Fortunately, due to Lemma 2.3, it is safe and convenient to consider only the case of $q \in \mathbb S$. The corresponding results will be given in the following corollary, from which we can obtain first the expression of $\mathbb E\left[G(F_{e(q)})\right]$ and then the price of a VA with GMDBs or GMMBs.

\begin{Corollary}
For $q \in \mathbb S$, defining $f_q(x):=\mathbb P\left(U_{e(q)} \in  dy\right)/dy$, we have the following results.

(i) If $b\geq 0$, then
\[
f_q(y)=
\left\{
\begin{split}
&\frac{\prod_{k=1}^{\hat{M}^+}\hat{\beta}_{k}}
{\prod_{k=1}^{M^+}\beta_{k}}K_q(dy)+\sum_{i=1}^{M^+}\sum_{j=1}^{\hat{N}^-}\sum_{m=1}^{\hat{M}^+}
\frac{K_{i,j}F_{1,m}}{\hat{\beta}_m-\beta_i}\left(e^{-\beta_i y}-e^{-\beta_i b }e^{\hat{\beta}_m (b-y)}\right), \ \ y> b,\\
&\frac{\prod_{k=1}^{N^-}\gamma_k}{\prod_{k=1}^{\hat{N}^-}\hat{\gamma}_k}K_q(dy)-
\sum_{i=1}^{M^+}\sum_{j=1}^{\hat{N}^-}\sum_{n=1}^{N^-}\frac{K_{i,j}F_{2,n}}{\gamma_n+\beta_i}
\left(e^{-\beta_i y}-e^{-\beta_i b }e^{\gamma_n (y-b)}\right), \ \ 0<y<b,
\end{split}
\right.
\]
and for $y\leq 0$,
\begin{small}
\[
f_q(y)=\frac{\prod_{k=1}^{N^-}\gamma_k}{\prod_{k=1}^{\hat{N}^-}\hat{\gamma}_k}K_q(dy)-
\sum_{i=1}^{M^+}\sum_{j=1}^{\hat{N}^-}\sum_{n=1}^{N^-}
\frac{K_{i,j}F_{2,n}}{\hat{\gamma}_j-\gamma_n}
\left(e^{\gamma_n y}\left(\frac{\hat{\gamma}_j+\beta_i}{\gamma_n+\beta_i}-\frac{\hat{\gamma}_j-\gamma_n}
{\gamma_n+\beta_i}e^{-(\beta_i+\gamma_n) b }\right)-e^{\hat{\gamma}_j y}\right).
\]
\end{small}

(ii) If $b<0$, then
\[
f_q(y)=
\left\{
\begin{split}
&\frac{\prod_{k=1}^{N^-}\gamma_k}{\prod_{k=1}^{\hat{N}^-}\hat{\gamma}_k}K_q(dy)-
\sum_{i=1}^{M^+}\sum_{j=1}^{\hat{N}^-}K_{i,j}\sum_{n=1}^{N^-}F_{2,n}
\frac{e^{\gamma_n(y-b)+\hat{\gamma}_j b}-e^{\hat{\gamma}_j y}}{\hat{\gamma}_j-\gamma_n}, \ \ y< b,\\
&\frac{\prod_{k=1}^{\hat{M}^+}\hat{\beta}_{k}}
{\prod_{k=1}^{M^+}\beta_{k}}K_q(dy)+
\sum_{i=1}^{M^+}\sum_{j=1}^{\hat{N}^-}K_{i,j}\sum_{m=1}^{\hat{M}^+}F_{1,m}
\frac{e^{\hat{\gamma}_j y}-e^{\hat{\beta}_m(b-y)+\hat{\gamma}_j b}}{\hat{\gamma}_j+\hat{\beta}_m}, \ \ b<y<0,
\end{split}
\right.
\]
and for $y\geq 0$,
\begin{small}
\[
f_q(y)=\frac{\prod_{k=1}^{\hat{M}^+}\hat{\beta}_{k}}
{\prod_{k=1}^{M^+}\beta_{k}}K_q(dy)+\sum_{i=1}^{M^+}\sum_{j=1}^{\hat{N}^-}
\sum_{m=1}^{\hat{M}^+}\frac{K_{i,j}F_{1,m}}{\beta_i-\hat{\beta}_m}
\left\{e^{-\beta_m y}\left(\frac{\beta_i+\hat{\gamma}_j}{\hat{\gamma}_j+\hat{\beta}_m}+\frac{\hat{\beta}_m-\beta_i}
{\hat{\beta}_m+\hat{\gamma}_j}e^{(\hat{\beta}_m+\hat{\gamma}_j)b}\right)
-e^{-\beta_i y}\right\}.
\]
\end{small}

In the above formulas,
\begin{equation}
F_{1,i}=-\hat{\beta}_i \prod_{k=1}^{M^+}\frac{\hat{\beta}_i-\beta_k}{\beta_k}
\prod_{k=1,k \neq i}^{\hat{M}^+}\frac{\hat{\beta}_k}{\hat{\beta}_i-\hat{\beta}_k}, \ \ for \ \ 1\leq i \leq \hat{M}^+,
\end{equation}

\begin{equation}
F_{2,i}=-\gamma_i \prod_{k=1}^{\hat{N}^-}\frac{\hat{\gamma}_k-\gamma_i}{\hat{\gamma}_k}
\prod_{k=1,k \neq i}^{N^-}\frac{\gamma_k}{\gamma_k-\gamma_i}, \ \ for \ \ 1\leq i\leq N^-,
\end{equation}
and
\begin{equation}
K_q(dx)=\sum_{i=1}^{M^+}\sum_{j=1}^{\hat{N}^-}K_{i,j}
e^{-\beta_i x}e^{(\beta_i+\hat{\gamma}_j) (x \wedge 0)},
\end{equation}
where
\[
K_{i,j}=\frac{\beta_i\hat{\gamma}_j}{\beta_i+\hat{\gamma}_j}\prod_{k=1}^{m^+}\frac{(\eta_k-\beta_{i})^{m_k}}{(\eta_k)^{m_k}}
\prod_{k=1,k \neq i}^{M^+}\frac{\beta_{k}}{\beta_{k}-\beta_{i}}\prod_{k=1}^{n^-}\frac{(\vartheta_k-\hat{\gamma}_{j})^{n_k}}
{(\vartheta_k)^{n_k}}
\prod_{k=1,k \neq j}^{\hat{N}^-}\left(\frac{\hat{\gamma}_{k}}{\hat{\gamma}_{k}-\hat{\gamma}_{j}}\right).
\]

\end{Corollary}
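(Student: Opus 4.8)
The plan is to specialize Corollary~4.1 at $x=0$ — i.e.\ to take formula (5.5) as the starting point — and then substitute fully explicit, finite mixed-exponential expressions for the three building blocks $K_q(dx)$, $F_1'(x)$ and $F_2'(x)$, after which the convolution integrals in (5.5) become elementary. The hypothesis $q\in\mathbb S$ is exactly what turns everything into a plain sum of exponentials: by Remark~2.3 and Lemma~2.4 all the roots $\beta_k,\hat\beta_k,\gamma_k,\hat\gamma_k$ are simple, and moreover $\hat M^+=M^+$.

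First I would make $K_q$ explicit. By (4.2) and Lemma~2.6, $K_q$ is the law of the independent sum $\underline Y_{e(q)}+\overline X_{e(q)}$, whose bilateral Laplace transform on the imaginary axis $Re(s)=0$ is the product $\hat{\mathbb E}\left[e^{s\underline Y_{e(q)}}\right]\mathbb E\left[e^{s\overline X_{e(q)}}\right]$, a rational function by (2.17) and (2.9). Inverting it by residues — separately on $\{x>0\}$, where only the poles at $s=-\beta_i$ contribute, and on $\{x<0\}$, where only $s=\hat\gamma_j$ contribute — gives the bivariate mixed-exponential density (5.8) with the coefficients $K_{i,j}$ as stated. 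Second, $F_1=F_0$ on $(0,\infty)$ (shown right after (3.7)), so differentiating the closed form (3.6) gives $F_1'(x)=\sum_{i=1}^{\hat M^+}F_{1,i}e^{-\hat\beta_i x}$ with $F_{1,i}$ as in (5.6); the Laplace transform (4.9) of $F_2$ is the evident dual of (2.10) (the roots $\hat\beta_k,\beta_k$ being replaced by $\gamma_k,\hat\gamma_k$ and the $\eta,\vartheta$ factors cancelling), so the same partial-fraction step yields $F_2'(z)=\sum_{n=1}^{N^-}F_{2,n}e^{\gamma_n z}$ with $F_{2,n}$ as in (5.7). I would also record the two scalar identities $F_1(0)+1=\prod_k\hat\beta_k/\prod_k\beta_k$ (from (2.12)) and $F_2(0)+1=\prod_k\gamma_k/\prod_k\hat\gamma_k$ (from (4.11)), which provide the coefficients of the leading $K_q(dy)$ terms.

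Next I would insert these expansions into (5.5) and carry out the convolutions term by term. The feature that forces the case split is that $K_q(dz)$, through the factor $e^{(\beta_i+\hat\gamma_j)(z\wedge 0)}$, is given by two different exponential expressions on $z<0$ and $z>0$; hence $\int_b^y F_1'(y-z)K_q(dz)$ (for $y>b$) and $\int_y^b F_2'(y-z)K_q(dz)$ (for $y<b$) must be broken at $z=0$ whenever $0$ lies strictly between the endpoints. This is precisely what generates the three regimes under case (i) $b\geq0$ (namely $y>b$, $0<y<b$, $y\leq0$) and the three under case (ii) $b<0$. In each regime every surviving integral has the form $\int e^{az}\,dz$ over an interval with endpoints among $\{b,y,0\}$, which is immediate; combining the pieces — and using $F_2(0)=F_1(0)$, already established in the proof of Corollary~4.1 — produces the displayed formulas.

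I expect no conceptual obstacle: everything reduces to partial fractions, residues, and integrals of exponentials. The genuine labour is the bookkeeping in the last step — there are triple sums over $(i,j,m)$ or $(i,j,n)$, and in the two ``far'' regimes ($y\leq0$ with $b\geq0$, and $y\geq0$ with $b<0$) one must add a contribution coming from the integral over $[b,0]$ to one coming from the integral over $[0,y]$ and then repackage the resulting $e^{\gamma_n y}$ and $e^{\hat\gamma_j y}$ (respectively $e^{-\hat\beta_m y}$ and $e^{-\beta_i y}$) coefficients into the compact bracketed expressions shown. Keeping the sign conventions in $F_{1,i}$, $F_{2,n}$ and $K_{i,j}$ mutually consistent throughout is the only real pitfall; since these manipulations are routine, I would present them in condensed form rather than writing out every term.
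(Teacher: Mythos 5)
Your proposal is correct and follows essentially the same route as the paper: starting from (5.5), you expand $K_q(dz)$, $F_1'$, $F_2'$ into finite mixed-exponential sums exactly as the paper does (the paper gets $K_q$ by convolving the densities in Lemma~A.1 whereas you invert the rational Wiener--Hopf product by residues, but these are the same partial-fraction computation), and you correctly identify the kink of $K_q$ at $z=0$ as what forces the three-regime split the paper leaves implicit in ``after some simple computations.''
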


\begin{proof}
Since $q \in \mathbb S$, we have $F_1(x)=F_0(x)$ (see (3.6) and (3.7)). So for $x>0$,
\[
F_1^{\prime}(x)=\sum_{i=1}^{\hat{M}^+}F_{1,i}e^{-\hat{\beta}_i x}
\]
with $F_{1,i}$ given by (5.6). Besides, formula (2.12) gives
\[
\begin{split}
F_1(0)=\frac{\prod_{k=1}^{\hat{M}^+}\hat{\beta}_{k}}
{\prod_{k=1}^{M^+}\beta_{k}}-1.
\end{split}
\]

From (2.17) and (4.9), applying partial fraction expansion gives
\[
F_2^{\prime}(x)=\sum_{i=1}^{N^-}F_{2,i}e^{\gamma_i x}, \ \ x < 0,
\]
where $F_{2,i}$ is given by (5.7). In addition, we know (see (4.11))
\[
F_2(0)= \frac{\prod_{k=1}^{N^-}\gamma_k}{\prod_{k=1}^{\hat{N}^-}\hat{\gamma}_k}-1.
\]

From Lemma A.1 (i), Lemma A.1 (iii) and (4.2), some straightforward calculations leads to (5.8).

Therefore, the desired results follow from (5.5) after some simple computations.
\end{proof}

\bigskip
\begin{appendix}
%\section*{Appendix}
\medskip

\renewcommand{\therem}{A.\arabic{rem}}
\renewcommand{\thelem}{A.\arabic{lem}}
\renewcommand{\thecor}{A.\arabic{cor}}

\section*{Appendix}
The proof of Proposition 2.1 is given in this section, where some ideas used can also be found in [22]. For completeness and for the convenience of the reader, we present all the details rather than omit some of them even though we will repeat some
preliminary results and calculation procedures appeared in [22].

Recall $\mathbb S$ is the set of $q > 0$ such that all the roots of $\psi(z) = q$ and $\hat{\psi}(z) = q$ are simple.

The following Lemma A.1 follows directly from Lemmas 2.1 and 2.4.

\begin{lem}
 For $q \in \mathbb S$, the following results hold.

(i) For $ y \geq 0$, $
\mathbb P\left(\overline{X}_{e(q)}\in dy\right)=\sum_{k=1}^{M^+}C_{k}e^{-\beta_{k}y}dy$,
where
\begin{equation}
\frac{C_i}{\beta_{i}}=\prod_{k=1}^{m^+}\left(\frac{\eta_k-\beta_{i}}{\eta_k}\right)^{m_k}
\prod_{k=1,k \neq i}^{M^+}\frac{\beta_{k}}{\beta_{k}-\beta_{i}}, \ \  1\leq i \leq M^+.\tag{A.1}
\end{equation}

(ii) For $y \geq 0$, $\hat{\mathbb P}\left(\overline{Y}_{e(q)}\in dy\right)=\sum_{k=1}^{\hat{M}^+}\hat{C}_{k}e^{-\hat{\beta}_{k}y}dy$, where
\begin{equation}
\frac{\hat{C}_i}{\hat{\beta}_{i}}=\prod_{k=1}^{m^+}\left(\frac{\eta_k-\hat{\beta}_{i}}{\eta_k}\right)^{m_k}
\prod_{k=1,k \neq i}^{\hat{M}^+}\frac{\hat{\beta}_{k}}{\hat{\beta}_{k}-\hat{\beta}_{i}}, \ \  1\leq i \leq \hat{M}^+.\tag{A.2}
\end{equation}

(iii) For $y \leq 0$, $
\hat{\mathbb P}\left(\underline{Y}_{e(q)}\in dy\right)=\sum_{k=1}^{\hat{N}^-}
\hat{D}_{k}e^{\hat{\gamma}_{k}y}dy
$, where
\begin{equation} \frac{\hat{D}_j}{\hat{\gamma}_{j}}=\prod_{k=1}^{n^-}\left(\frac{\vartheta_k-\hat{\gamma}_{j}}
{\vartheta_k}\right)^{n_k}
\prod_{k=1,k \neq j}^{\hat{N}^-}\left(\frac{\hat{\gamma}_{k}}{\hat{\gamma}_{k}-\hat{\gamma}_{j}}\right), \ \ 1\leq j \leq \hat{N}^-.\tag{A.3}
\end{equation}
\end{lem}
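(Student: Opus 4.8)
The plan is to obtain each of the three densities by inverting, via a partial fraction expansion, the Laplace transforms already supplied by Lemmas 2.1 and 2.4, using crucially that $q\in\mathbb S$ forces all roots to be simple.

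First I would record, from Remark 2.3 and Lemma 2.4(i), that for $q\in\mathbb S$ one has $M^+=\hat M^+=1+\sum_{k=1}^{m^+}m_k$ and $\hat N^-=\sum_{k=1}^{n^-}n_k+1$, with every multiplicity equal to $1$. Inserting $M_k\equiv 1$ into (2.9) gives
\[
\mathbb E\!\left[e^{-s\overline{X}_{e(q)}}\right]
=\frac{\prod_{k=1}^{m^+}(s+\eta_k)^{m_k}}{\prod_{k=1}^{m^+}(\eta_k)^{m_k}}
\cdot\frac{\prod_{k=1}^{M^+}\beta_k}{\prod_{k=1}^{M^+}(s+\beta_k)}.
\]
By (2.7) the numerator has degree $\sum_k m_k$ and the denominator degree $M^+=1+\sum_k m_k$, so this is a proper rational function of $s$ whose only poles are the simple, distinct, nonzero values $s=-\beta_1,\dots,-\beta_{M^+}$ furnished by Lemma 2.1. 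Hence it equals $\sum_{i=1}^{M^+}C_i/(s+\beta_i)$ with $C_i=\lim_{s\to-\beta_i}(s+\beta_i)\,\mathbb E[e^{-s\overline{X}_{e(q)}}]$, and evaluating this residue and cancelling a factor $\beta_i$ produces precisely (A.1).

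Next I would invert: for $\mathrm{Re}(s)$ large, $\int_0^\infty e^{-sy}\big(\sum_i C_i e^{-\beta_i y}\big)\,dy=\sum_i C_i/(s+\beta_i)=\mathbb E[e^{-s\overline{X}_{e(q)}}]$, so by uniqueness of the Laplace transform on a right half-plane the law of $\overline{X}_{e(q)}$ must be $\sum_k C_k e^{-\beta_k y}\,dy$ on $y\ge 0$ (automatically a genuine probability density, with no atom at $0$, consistent with $\sum_i C_i/\beta_i=\mathbb E[e^{-s\overline{X}_{e(q)}}]\big|_{s=0}=1$ and with Remark 2.7). Part (ii) is the identical argument applied to (2.8) with $\hat\beta_k$ in place of $\beta_k$. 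Part (iii) is again the same argument applied to the second identity in (2.17): there $\hat{\mathbb E}[e^{s\underline{Y}_{e(q)}}]$ is, by the same degree count, a proper rational function of $s$ with simple poles at $-\hat\gamma_j$, so expanding it into $\sum_j \hat D_j/(s+\hat\gamma_j)$ and using that $\underline{Y}_{e(q)}\le 0$ yields the density $\sum_j\hat D_j e^{\hat\gamma_j y}$ on $y\le 0$, with $\hat D_j$ the corresponding residue, which simplifies to (A.3).

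There is no genuine difficulty here; the statement really does follow directly, and the remaining work is bookkeeping. The points deserving care are: (a) checking via Lemma 2.1 and Lemma 2.4 that, restricted to $\mathbb S$, the degree count makes each transform a proper rational function and the poles simple and distinct, so that the partial fraction expansion is legitimate (the poles themselves may be complex, by Remark 2.1, but the argument is insensitive to this, the resulting law being a genuine measure by Lemma 2.1); and (b) the residue algebra, which reduces to evaluating the explicit product in (2.8), (2.9) or (2.17) at $s=-\beta_i$ (respectively $-\hat\beta_i$, $-\hat\gamma_j$) and dividing out the appropriate linear factor. I would expect (b), a matter of matching indices and signs, to be the most error-prone step, though it is entirely mechanical.
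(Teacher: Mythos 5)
Your proposal is correct and is exactly the route the paper intends: the paper gives no separate proof of Lemma A.1, stating only that it ``follows directly from Lemmas 2.1 and 2.4,'' and your argument (restrict to $q\in\mathbb S$ so all multiplicities are one, observe the Wiener--Hopf factors in (2.8), (2.9) and (2.17) become proper rational functions with distinct simple poles, expand in partial fractions, compute the residues, and invert the Laplace transform) is precisely that direct deduction. The residue computations you outline do reproduce (A.1)--(A.3), and your remark that the absence of an atom at $0$ is guaranteed by $\sigma>0$ (Remark 2.7) correctly closes the inversion step.
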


Next, introduce the following three rational functions:
\begin{equation}
\psi^+(s):=\prod_{k=1}^{m^+}\left(\frac{s+\eta_k}{\eta_k}\right)^{m_k}
\prod_{k=1}^{M^+}\left(\frac{\beta_{k}}{s+\beta_{k}}\right)
=\sum_{k=1}^{M^+}\frac{C_k}{s+\beta_k}, \tag{A.4}
\end{equation}
\begin{equation}
\hat{\psi}^+(s):=\prod_{k=1}^{m^+}\left(\frac{s+\eta_k}{\eta_k}\right)^{m_k}
\prod_{k=1}^{\hat{M}^+}\left(\frac{\hat{\beta}_{k}}{s+\hat{\beta}_{k}}\right)
=\sum_{k=1}^{\hat{M}^+}\frac{\hat{C}_k}{s+\hat{\beta}_k},
\tag{A.5}
\end{equation}
and
\begin{equation}
\begin{split}
&\hat{\psi}^-(s):=\prod_{k=1}^{n^-}\left(\frac{s+\vartheta_k}{\vartheta_k}\right)^{n_k}
\prod_{k=1}^{\hat{N}^-}\left(\frac{\hat{\gamma}_{k}}{s+\hat{\gamma}_{k}}\right)
=\sum_{k=1}^{\hat{N}^-}\frac{\hat{D}_{k}}{s+\hat{\gamma}_{k}}.
\end{split}\tag{A.6}
\end{equation}
For $q \in \mathbb S$ and $Re(s) \geq 0$, note that (see (2.8), (2.9) and (2.17))
\begin{equation}
\begin{split}
\mathbb E\left[e^{-s \overline{X}_{e(q)}}\right]=\psi^+(s), \ \ \hat{\mathbb E}\left[e^{-s \overline{Y}_{e(q)}}\right]=\hat{\psi}^+(s) \  \ and \ \
\hat{\mathbb E}\left[e^{s\underline{Y}_{e(q)}}\right]=\hat{\psi}^-(s).
\end{split}\tag{A.7}
\end{equation}
In addition, for $a \in \mathbb R$, define
\begin{equation}
\tau_{a}^{+} := \inf\{t\geq 0: X_t > a\}\ \ and \ \  \hat{\tau}_{a}^{-}:= \inf\{t\geq 0: Y_t < a \}.\tag{A.8}
\end{equation}

Results on the one-sided exit problems of $X$ and $Y$ are presented in the following lemma. Lemma A.2 (i) can be established by applying Lemma A.1 (i), (2.14) and (A.4); and Lemma A.2 (ii) follows from Lemma A.1 (iii), (A.6) and  the following result (see Corollary 2 in [1])
\[
\hat{\mathbb E}\left[e^{-q \hat{\tau}_{x}^-+s(Y_{\hat{\tau}_{x}^-}-x)}\right]=\frac{\hat{\mathbb E}\left[
e^{s(\underline{Y}_{e(q)}-x)}\textbf{1}_{\{\underline{Y}_{e(q)} < x \}}\right]}{\hat{\mathbb E}\left[
e^{s\underline{Y}_{e(q)}}\right]}, \ \ x, s \geq 0.
\]

\begin{lem}
(i) For $q \in \mathbb S$ and $x, y \geq  0$,
\[
\mathbb E\left[e^{-q \tau_{x}^+}\rm{\bf{1}}_{\{X_{\tau_{x}^+}- x \in dy\}}\right]=
C_0(x)\delta_0(dy)+
\sum_{k=1}^{m^+}\sum_{j=1}^{m_k}C_{kj}(x)\frac{(\eta_k)^jy^{j-1}}{(j-1)!}e^{-\eta_k y}dy,
\]
where $\delta_0(dy)$ is the Dirac delta at $y = 0$, $C_0(x)$ and $C_{kj}(x)$ are given by rational expansion:
\begin{equation}
\begin{split}
&C_0(x)+\sum_{k=1}^{m^+}\sum_{j=1}^{m_k}C_{kj}(x)\left(\frac{\eta_k}{\eta_k+ s}\right)^j
=\frac{1}{\psi^+(s)}\sum_{k=1}^{M^+}C_{k}\frac{e^{-\beta_k x}}{s+\beta_k}, \ \ x\geq 0.
\end{split}\tag{A.9}
\end{equation}

(ii) For $q \in  \mathbb S$ and $x, y \leq 0$,
\[
\hat{\mathbb E}\left[e^{-q \hat{\tau}_{x}^-}\rm{\bf{1}}_{\{Y_{\hat{\tau}_{x}^-}- x \in dy\}}\right]=\hat{D}_0(x)\delta_0(dy)+
\sum_{k=1}^{n^-}\sum_{j=1}^{n_k}\hat{D}_{kj}(x)\frac{(\vartheta_k)^j(-y)^{j-1}}{(j-1)!}e^{\vartheta_k y}dy,
\]
where $\hat{D}_0(x)$ and $\hat{D}_{kj}(x)$ are given by rational expansion:
\begin{equation}
\begin{split}
&\hat{D}_0(x)+\sum_{k=1}^{n^-}\sum_{j=1}^{n_k}\hat{D}_{kj}(x)\left(\frac{\vartheta_k}{\vartheta_k+ s}\right)^j
=
\frac{1}{\hat{\psi}^-(s)}\sum_{k=1}^{\hat{N}^-}\hat{D}_{k}\frac{e^{\hat{\gamma}_k x}}{s+\hat{\gamma}_k},  \  x\leq 0.
\end{split}\tag{A.10}
\end{equation}
\end{lem}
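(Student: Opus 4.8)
The plan is to convert each one-sided exit functional into an explicit \emph{rational} Laplace transform in the overshoot variable and then recover the law by inverting that transform term by term. I give the argument for part~(i); part~(ii) is the dual statement for $Y$ and its running infimum and runs along identical lines.

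Fix $q\in\mathbb S$ and $x\ge 0$. The first step is to isolate $\mathbb E[e^{-q\tau_x^+-s(X_{\tau_x^+}-x)}]$ from (2.14): by the strong Markov property at $\tau_x^+$ together with the lack of memory of $e(q)$ one has $\mathbb E[e^{-s(\overline{X}_{e(q)}-x)}\textbf{1}_{\{\overline{X}_{e(q)}>x\}}]=\mathbb E[e^{-q\tau_x^+-s(X_{\tau_x^+}-x)}]\,\mathbb E[e^{-s\overline{X}_{e(q)}}]$, and by (A.7) the last factor equals $\psi^+(s)$, so
\[
\mathbb E\bigl[e^{-q\tau_x^+-s(X_{\tau_x^+}-x)}\bigr]=\frac{1}{\psi^+(s)}\,\mathbb E\bigl[e^{-s(\overline{X}_{e(q)}-x)}\textbf{1}_{\{\overline{X}_{e(q)}>x\}}\bigr].
\]
The second step computes the numerator: inserting the density $\mathbb P(\overline{X}_{e(q)}\in dy)=\sum_{k=1}^{M^+}C_k e^{-\beta_k y}\,dy$ on $y\ge 0$ from Lemma~A.1(i) and integrating (valid since $\mathrm{Re}(\beta_k)>0$) gives $\mathbb E[e^{-s(\overline{X}_{e(q)}-x)}\textbf{1}_{\{\overline{X}_{e(q)}>x\}}]=\sum_{k=1}^{M^+}C_k e^{-\beta_k x}/(s+\beta_k)$, and substituting this back produces exactly the right-hand side of (A.9).

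The third step is the Laplace inversion in $s$. Here I would check the pole and degree structure: the sum $\sum_k C_k e^{-\beta_k x}/(s+\beta_k)$ has simple poles at the points $-\beta_k$, whereas $1/\psi^+(s)=\prod_k(\eta_k/(s+\eta_k))^{m_k}\prod_k(s+\beta_k)/\beta_k$ has a simple zero at each $-\beta_k$ (the $\beta_k$ are distinct because $q\in\mathbb S$) and has poles only at $-\eta_k$, of order $m_k$; moreover, since $M^+=1+\sum_k m_k$, the product remains bounded as $s\to\infty$. Consequently the left-hand side of (A.9) is a proper rational function whose only singularities are poles at $-\eta_k$ of order at most $m_k$, so its partial-fraction expansion has precisely the form $C_0(x)+\sum_{k=1}^{m^+}\sum_{j=1}^{m_k}C_{kj}(x)(\eta_k/(\eta_k+s))^j$ displayed in (A.9), with $C_0(x)$ its value at $s=\infty$. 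Inverting term by term---the constant $C_0(x)$ giving the atom $C_0(x)\delta_0(dy)$ (a genuine creeping mass because $\sigma>0$) and $(\eta_k/(\eta_k+s))^j$ giving $\eta_k^j y^{j-1}e^{-\eta_k y}/(j-1)!$---and using that a measure is determined by its Laplace transform yields the claimed expression.

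For part~(ii) I would rerun the same scheme with $\hat{\tau}_x^-$, the process $Y$ and its running infimum $\underline{Y}_{e(q)}$: starting from the overshoot identity for $Y$ displayed just before the statement (see Corollary~2 in [1]), substitute $\hat{\mathbb P}(\underline{Y}_{e(q)}\in dy)=\sum_k\hat{D}_k e^{\hat{\gamma}_k y}\,dy$ from Lemma~A.1(iii) into its numerator and $\hat{\psi}^-(s)$ from (A.6)--(A.7) into its denominator, reach the right-hand side of (A.10), and invert just as before (the count now uses $\hat{N}^-=1+\sum_k n_k$, so the only poles lie at $-\vartheta_k$). I do not foresee a real obstacle; the only delicate points are the degree-and-pole bookkeeping guaranteeing that the functions appearing in (A.9)--(A.10) are of ``atom plus mixed-Erlang'' type, and correctly identifying the $s=\infty$ value with the creeping mass $C_0(x)$, respectively $\hat{D}_0(x)$.
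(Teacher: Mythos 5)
Your proposal is correct and follows exactly the route the paper indicates in the one sentence preceding the lemma (apply Lemma~A.1(i), (2.14) and (A.4) for part~(i); and Lemma~A.1(iii), (A.6) together with the overshoot identity from Corollary~2 of [1] for part~(ii)); you simply flesh out the rational-function bookkeeping and the inversion that the paper leaves implicit. One small remark: you quietly use $\mathbb E[e^{-s\overline{X}_{e(q)}}]$ rather than $\mathbb E[e^{-sX_{e(q)}}]$ as the second factor on the right of (2.14) --- that is indeed the correct form (and the one the paper actually uses to reach (2.15)); (2.14) as printed has a typo.
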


\begin{rem}
A useful observation is that $C_0(x)$ and $C_{kj}(x)$ in (A.9) are linear combinations of $e^{\beta_i x}$ for  $1 \leq i \leq M^+$, and $\hat{D}_0(x)$ and $\hat{D}_{kj}(x)$ in (A.10) are linear combinations of $e^{\hat{\gamma}_i x}$ for $1\leq i \leq \hat{N}^-$.
\end{rem}

Lemma A.3 is a straightforward result of (A.9) and (A.10), here the reader is reminded that $\frac{1}{(\theta +\beta_k)
(s+\beta_k)}$ can be written as $\frac{1}{s-\theta}\left(\frac{1}{\theta +\beta_k}-\frac{1}
{s+\beta_k}\right)$.

\begin{lem} For any $\theta >0$ and $s \neq -\eta_1, \ldots, -\eta_{m^+}$ with $\theta \neq s$,
\begin{equation}
\begin{split}
&\int_{0}^{\infty}e^{-\theta x}C_0(x)dx+\sum_{k=1}^{m^+}\sum_{j=1}^{m_k} \int_{0}^{\infty}e^{-\theta x}C_{kj}(x)dx\left(\frac{\eta_k}{\eta_k+s}\right)^j=\frac{1}{s-\theta}\left(\frac{\psi^+(\theta)}
{\psi^+(s)}-1\right),
\end{split}\tag{A.11}
\end{equation}
and for any $\theta >0$ and $s \neq -\vartheta_1, \ldots, -\vartheta_{n^-}$ with $\theta \neq s$
\begin{equation}
\begin{split}
&\int_{-\infty}^{0}e^{\theta x} \hat{D}_0(x)dx +\sum_{k=1}^{n^-}\sum_{j=1}^{n_k} \int_{-\infty}^{0}e^{\theta x}\hat{D}_{kj}(x)dx\left(\frac{\vartheta_k}{\vartheta_k+s}\right)^j=\frac{1}{s-\theta}\left(\frac{\hat{\psi}^-(\theta)}
{\hat{\psi}^-(s)}-1\right).
\end{split}\tag{A.12}
\end{equation}
\end{lem}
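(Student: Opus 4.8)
The plan is to obtain (A.11) and (A.12) by integrating the rational identities (A.9) and (A.10) against a one-sided exponential in the free variable $x$; I spell out the argument for (A.11), the one for (A.12) being its mirror image. Fix $\theta>0$ and $s$ outside the finite set $\{-\eta_1,\dots,-\eta_{m^+}\}$ and with $s\neq\theta$. The first step is to justify integrating (A.9) term by term against $e^{-\theta x}\,dx$ over $(0,\infty)$: by Lemma A.2 (i) the quantities $C_0(x)$ and $C_{kj}(x)$ are the coefficients in the decomposition of the kernel $\mathbb E\big[e^{-q\tau_x^+}\textbf{1}_{\{X_{\tau_x^+}-x\in dy\}}\big]$, which has total mass at most $1$, so (cf.\ Remark A.1, which identifies them as finite combinations of exponentials) they are bounded on $[0,\infty)$; since $\theta>0$, the integrals $\int_0^\infty e^{-\theta x}C_0(x)\,dx$ and $\int_0^\infty e^{-\theta x}C_{kj}(x)\,dx$ are therefore absolutely convergent, and since the sums in (A.9) are finite the interchange of sum and integral is legitimate. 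Performing it, the left-hand side of (A.9) reproduces verbatim the left-hand side of (A.11).

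For the right-hand side I would use $Re(\beta_k)>0$ for every $k$ (see (2.6)), which gives $\int_0^\infty e^{-(\theta+\beta_k)x}\,dx=(\theta+\beta_k)^{-1}$, so the integrated right-hand side of (A.9) equals $\frac{1}{\psi^+(s)}\sum_{k=1}^{M^+}\frac{C_k}{(s+\beta_k)(\theta+\beta_k)}$. I then apply the partial-fraction identity $\frac{1}{(\theta+\beta_k)(s+\beta_k)}=\frac{1}{s-\theta}\big(\frac{1}{\theta+\beta_k}-\frac{1}{s+\beta_k}\big)$ recorded just before the lemma — this is the only place where the hypothesis $s\neq\theta$ enters — to rewrite the expression as $\frac{1}{(s-\theta)\psi^+(s)}\big(\sum_k\frac{C_k}{\theta+\beta_k}-\sum_k\frac{C_k}{s+\beta_k}\big)$, after which the partial-fraction form $\psi^+(w)=\sum_k\frac{C_k}{w+\beta_k}$ from (A.4) identifies the two inner sums as $\psi^+(\theta)$ and $\psi^+(s)$. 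This collapses the right-hand side to $\frac{1}{s-\theta}\big(\frac{\psi^+(\theta)}{\psi^+(s)}-1\big)$, which is precisely the right-hand side of (A.11).

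For (A.12) I would run the identical computation on (A.10): multiply through by $e^{\theta x}$, integrate over $(-\infty,0)$, use $Re(\hat\gamma_k)>0$ (Lemma 2.4 (i)) to get $\int_{-\infty}^0 e^{(\theta+\hat\gamma_k)x}\,dx=(\theta+\hat\gamma_k)^{-1}$, apply the same partial-fraction split, and finish with $\hat\psi^-(w)=\sum_k\frac{\hat D_k}{w+\hat\gamma_k}$ from (A.6). I do not expect any genuine obstacle: every step is elementary algebra, and the only point that truly needs justification is the term-by-term integration at the outset, which is supplied by $\theta>0$ together with the boundedness of $C_0,C_{kj}$ (resp.\ $\hat D_0,\hat D_{kj}$).
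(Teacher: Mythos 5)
Your proof is correct and follows exactly the route the paper intends: integrate the rational expansions (A.9) and (A.10) against the one‑sided exponential $e^{-\theta x}$ (resp.\ $e^{\theta x}$), invoke the partial‑fraction split $\frac{1}{(\theta+\beta_k)(s+\beta_k)}=\frac{1}{s-\theta}\bigl(\frac{1}{\theta+\beta_k}-\frac{1}{s+\beta_k}\bigr)$ that the paper itself flags as the key observation, and then recognize the resulting sums as $\psi^{+}(\theta)$, $\psi^{+}(s)$ (resp.\ $\hat\psi^{-}(\theta)$, $\hat\psi^{-}(s)$) via (A.4) and (A.6). Your extra care over absolute convergence of the integrals, using $\theta>0$ together with the exponential‑decay structure of $C_0,C_{kj}$ and $\hat D_0,\hat D_{kj}$, is exactly the justification the paper leaves implicit.
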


\begin{proof}[Proof of Proposition 2.1]
For given $y > b$, the function of $x$, $V_q(x)$, is defined as (see Lemma 2.5):
\begin{equation}
V_q(x) = \mathbb P_x\left(U_{e(q)} > y\right).\tag{A.13}
\end{equation}
Recall (1.1) and note that $\{X_t, t < \tau_{b}^+ \}$ and $\{U_t, t < \kappa_b^+ \}$ with $\kappa_b^+:=\inf\{t\geq 0: U_t > b \}$ under $\mathbb P_x$ have the same law if $x < b$. Thus, for $x<b$, the strong Markov property of $U$ will lead to
\begin{equation}
\begin{split}
V_q(x)
&=\mathbb E_x\left[\textbf{1}_{\{U_{e(q)} > y\}} \textbf{1}_{\{e(q) > \kappa_b^+\}}\right]=
\mathbb E_x\left[e^{-q\kappa_b^+}V_q(U_{\kappa_b^+})\right]\\
&=\mathbb E_x\left[e^{-q\tau_{b}^+}V_q(X_{\tau_{b}^+})\right]=\mathbb E\left[e^{-q\tau_{b-x}^+}V_q(X_{\tau_{b-x}^+}+x)\right]\\
&=\sum_{k=1}^{m^+}\sum_{j=1}^{m_k}C_{kj}(b-x)\int_{0}^{\infty}\frac{(\eta_k)^jz^{j-1}}{(j-1)!}e^{-\eta_k z}V_q(b+z)dz\\
&\ \ + C_0(b-x)V_q(b)=\sum_{k=1}^{M^+}J_{k}e^{\beta_{k}(x-b)}, \ \ x<b,
\end{split}\tag{A.14}
\end{equation}
where $J_1, \ldots, J_{M^+}$ are constants which are not dependent on $x$; the fourth and the fifth equality follows from Lemma A.2 (i) and Remark A.1, respectively.

For $x > b$, the strong Markov property of $U$ and the fact that $\{Y_t, t < \hat{\tau}_{b}^-\}$ under $\hat{\mathbb P}_x$ and $\{U_t, t < \kappa_b^-\}$ with  $\kappa_b^-:=\inf\{t\geq 0: U_t < b \}$  under $\mathbb P_x$ have the same law\footnote{Strictly speaking, this statement should be written as follows:
$\{Y_t, t < \tilde{\tau}_{b}^-\}$  with $\tilde{\tau}_{b}^-:= \inf\{t\geq 0: Y_t \leq b\}$ under $\hat{\mathbb P}_x$ and $\{U_t, t < \tilde{\kappa}_b^-\}$ with  ${\tilde{\kappa}_b}^-:=\inf\{t\geq 0: U_t \leq b \}$  under $\mathbb P_x$ have the same law. But, since $\sigma > 0$, we have  $\mathbb P_x\left(\hat{\tau}_{b}^-=\tilde{\tau}_{b}^-\right)=1$ and $\mathbb P_x\left(\kappa_b^-=\tilde{\kappa}_{b}^-\right)=1$.} will give
\begin{equation}
\begin{split}
&V_q(x)=\mathbb E_x\left[\int_{0}^{\kappa_b^{-}} qe^{-q t}\rm{\bf{1}}_{\{U_t > y\}}dt +
\int_{\kappa_b^{-}}^{\infty}qe^{-q t}\rm{\bf{1}}_{\{U_t > y\}}dt\right]\\
& = \hat{\mathbb E}_x\left[\int_{0}^{\infty}qe^{-q t}\rm{\bf{1}}_{\{Y_t > y, t < \hat{\tau}_{b}^{-}\}}dt\right]
+ \hat{\mathbb E}_x\left[e^{-q \hat{\tau}_{b}^{-}}V_q(Y_{\hat{\tau}_{b}^{-}})\right]\\
&=\hat{\mathbb P}_x(Y_{e(q)} > y, \underline{Y}_{e(q)} \geq b)+ \hat{\mathbb E}_x\left[e^{-q \hat{\tau}_{b}^{-}}V_q(Y_{\hat{\tau}_{b}^{-}})\right].
\end{split}\tag{A.15}
\end{equation}
Since Lemma 2.6 holds, the first item on the right-hand side of (A.15) can be written as
\begin{equation}
\begin{split}
&\int_{b-x}^{0}
\hat{\mathbb P}(Y_{e(q)}-\underline{Y}_{e(q)} > y - x -z, \underline{Y}_{e(q)}\in dz) \\
=
&\int_{b-x}^{0}\hat{\mathbb P}(\overline{Y}_{e(q)} > y - x -z)\hat{\mathbb P}(\underline{Y}_{e(q)}\in dz)\\
=
&\left\{\begin{array}{cc}
\sum_{k=1}^{\hat{M}^+}\hat{H}_{k}e^{\hat{\beta}_{k}(x-y)}+\sum_{k=1}^{\hat{N}^-}\hat{P}^{*}_{k}
e^{\hat{\gamma}_{k}(b-x)}, & b <  x \leq y,\\
1+\sum_{k=1}^{\hat{N}^-}\hat{Q}_{k}e^{\hat{\gamma}_{k}(y-x)}+\sum_{k=1}^{\hat{N}^-}\hat{P}^{*}_{k}
e^{\hat{\gamma}_{k}(b-x)}, & x \geq y,
\end{array}\right.
\end{split}\tag{A.16}
\end{equation}
where the second equality is due to Lemma A.1 (ii) and (iii) (note that $\hat{\mathbb P}\left(
\overline{Y}_{e(q)} > z\right)=1$ if $z \leq 0$); for $k=1,2,\ldots, \hat{M}^+$,
\begin{equation}
\begin{split}
\hat{H}_k=\frac{\hat{C}_k}{\hat{\beta}_k}\sum_{j=1}^{\hat{N}^-}\frac{\hat{D}_j}{\hat{\beta}_k+\hat{\gamma}_j},
\end{split}\tag{A.17}
\end{equation}
and for $k=1,2,\ldots, \hat{N}^-$,
\begin{equation}
\begin{split}
\hat{Q}_k=\hat{D}_k\sum_{i=1}^{\hat{M}^+}\frac{\hat{C}_i}{\hat{\beta_i}(\hat{\beta}_i+\hat{\gamma}_k)}-\frac{\hat{D}_k}
{\hat{\gamma}_k} \ and \
\hat{P}^{*}_k=-\sum_{i=1}^{\hat{M}^+}\frac{\hat{C}_i}{\hat{\beta}_i}
\frac{\hat{D}_k}{\hat{\beta}_i+\hat{\gamma}_k}e^{\hat{\beta}_i(b-y)}.
\end{split}\tag{A.18}
\end{equation}

Therefore, from (A.15), (A.16), Lemma A.2 (ii) and Remark A.1, we conclude that there are some constants $\hat{P}_1, \ldots, \hat{P}_{\hat{N}^-}$ (independent of $x$) such that
\begin{equation}
\begin{split}
\sum_{k=1}^{\hat{N}^-}\hat{P}_ke^{\hat{\gamma}_{k}(b-x)}
&=\sum_{k=1}^{n^-}\sum_{j=1}^{n_k}\hat{D}_{kj}(b-x)\int_{-\infty}^{0}V_q(b+z)
\frac{(\vartheta_k)^j(-z)^{j-1}}{(j-1)!}e^{\vartheta_kz}dz\\
&+\hat{D}_0(b-x)V_q(b)+\sum_{j=1}^{\hat{N}^-}\hat{P}^{*}_{j}
e^{\hat{\gamma}_{j}(b-x)}, \  \ for \ \ all \ \ x > b,
\end{split}\tag{A.19}
\end{equation}
and
\begin{equation}
V_q(x)=\left\{
\begin{array}{cc}
\sum_{k=1}^{\hat{M}^+}\hat{H}_{k}e^{\hat{\beta}_{k}(x-y)}+\sum_{k=1}^{\hat{N}^-}\hat{P}_{k}
e^{\hat{\gamma}_{k}(b-x)}, & b < x \leq y,\\
1+\sum_{k=1}^{\hat{N}^-}\hat{Q}_{k}e^{\hat{\gamma}_{k}(y-x)}+\sum_{k=1}^{\hat{N}^-}\hat{P}_{k}
e^{\hat{\gamma}_{k}(b-x)}, & x \geq y,
\end{array}\right.\tag{A.20}
\end{equation}

For the constants $J_k$ in (A.14) and $\hat{P}_k$ in (A.19), we will show in the following Lemma A.4 that formulas (A.25)--(A.28) hold.

Next, consider a rational function of $x$ as follows:
\begin{equation}
L(x)=\sum_{i=1}^{M^+}\frac{J_i}{x-\beta_i}-
\sum_{i=1}^{\hat{N}^-}\frac{\hat{P}_i}{x+\hat{\gamma}_i}-\sum_{i=1}^{\hat{M}^+}\frac{\hat{H}_i}{x-\hat{\beta}_i}
e^{\hat{\beta}_i(b-y)}. \tag{A.21}
\end{equation}
For fixed $1 \leq k \leq m^+$ and $0\leq j \leq m_k-1$, (A.27) in Lemma A.4 gives us  $\frac{\partial^j}{\partial x^j}\left(L(x)\right)_{x=\eta_k}=0$. This implies  that $\eta_k$ is a root of $L(x)=0$ and its multiplicity is $m_k$. Moreover, for $1 \leq k \leq n^-$, (A.28) means that $-\vartheta_k$ is a $n_k$-multiplicity root of $L(x)=0$. From these results, $L(x)$ can be rewritten as
\begin{equation}
\frac{\prod_{k=1}^{m^+}(x-\eta_k)^{m_k}\prod_{k}^{n^-}(x+\vartheta_k)^{n_k}(l_0+l_1x+\cdots+
l_{M^+-1}x^{M^+-1}+ x^{M^+}(L_0+L_1x)}
{\prod_{i=1}^{M^+}(x-\beta_i)\prod_{i=1}^{\hat{N}^-}(x+\hat{\gamma}_i)\prod_{i=1}^{\hat{M}^+}
(x-\hat{\beta}_i)},\tag{A.22}
\end{equation}
where $l_0,l_1, \ldots, l_{M^+-1}$, $L_0$ and $L_1$  are constants, and we have used $M^+=\sum_{k=1}^{m^+}m_k+1=\hat{M}^+$ and
$\hat{N}^-=\sum_{k=1}^{n^-}n_k +1$ (see Remark 2.5 and Lemma 2.4) in the above derivation.

Then, by applying (A.25) and (A.26), we derive $L_0=0$ and $L_1=0$ from (A.21) and (A.22). Finally, it can be seen from (A.21) that
\begin{equation}
\lim_{x\rightarrow \hat{\beta}_i}L(x)(x-\hat{\beta}_i)=-\hat{H}_ie^{\hat{\beta}_i(b-y)}, \ \ 1 \leq i \leq \hat{M}^+.\tag{A.23}
\end{equation}
Therefore, we arrive at the conclusion that
\begin{equation}
\begin{split}
&L(x)=\frac{\prod_{k=1}^{m^+}(x-\eta_k)^{m_k}\prod_{k=1}^{n^-}(x+\vartheta_k)^{n_k}}
{\prod_{i=1}^{M^+}(x-\beta_i)\prod_{i=1}^{\hat{N}^-}(x+\hat{\gamma}_i)}\times\\
&\sum_{k=1}^{\hat{M}^+}
\frac{\prod_{i=1}^{M^+}(\hat{\beta}_k-\beta_i)\prod_{i=1}^{\hat{N}^-}(\hat{\beta}_k+\hat{\gamma}_i)}
{\prod_{i=1}^{m^+}(\hat{\beta}_k-\eta_i)^{m_i}\prod_{i=1}^{n^-}(\hat{\beta}_k+\vartheta_i)^{n_i}}
\frac{-\hat{H}_k}{x-\hat{\beta}_k}e^{\hat{\beta}_k(b-y)}.
\end{split}\tag{A.24}
\end{equation}
Formulas (2.20) and (2.21) are derived from (A.14), (A.20), (A.21) and (A.24).
\end{proof}

\begin{lem}
(i) It holds that
\begin{equation}
\sum_{i=1}^{M^+}J_i=V_q(b)=\sum_{i=1}^{\hat{M}^+}\hat{H}_ie^{\hat{\beta}_i(b-y)}+\sum_{i=1}^{\hat{N}^-}\hat{P}_i, \tag{A.25}
\end{equation}
and
\begin{equation}
\sum_{i=1}^{M^+}J_i\beta_i=V_q^{\prime}(b)=\sum_{i=1}^{\hat{M}^+}\hat{H}_i\hat{\beta}_ie^{\hat{\beta}_i(b-y)}-
\sum_{i=1}^{\hat{N}^-}
\hat{P}_i\hat{\gamma}_i.
\tag{A.26}
\end{equation}

(ii) For $1 \leq k \leq m^+$ and $0\leq j \leq m_k-1$,
\begin{equation}
\sum_{i=1}^{M^+}\frac{J_i(-1)^j}{(\beta_i-\eta_k)^{j+1}}+\sum_{i=1}^{\hat{N}^-}\frac{\hat{P}_i}
{(\eta_k+\hat{\gamma}_i)^{j+1}}
-\sum_{i=1}^{\hat{M}^+}\frac{\hat{H}_i(-1)^j}{(\hat{\beta}_i-\eta_k)^{j+1}}e^{\hat{\beta}_i(b-y)}=0.\tag{A.27}
\end{equation}

(iii) For any given $1 \leq k \leq n^-$ and $0\leq j \leq n_k-1$,
\begin{equation}
\sum_{i=1}^{M^+}\frac{J_i(-1)^j}{(\beta_i+\vartheta_k)^{j+1}}+
\sum_{i=1}^{\hat{N}^-}\frac{\hat{P}_i}{(\hat{\gamma}_i-\vartheta_k)^{j+1}}
-\sum_{i=1}^{\hat{M}^+}\frac{\hat{H}_i(-1)^j}{(\hat{\beta}_i+\vartheta_k)^{j+1}}e^{\hat{\beta}_i(b-y)}=0.\tag{A.28}
\end{equation}
\end{lem}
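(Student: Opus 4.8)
The plan is to read off (A.25)--(A.26) from the smooth fit of $V_q$ at $b$, and to obtain (A.27)--(A.28) by substituting the known shapes of $V_q$ on the two sides of $b$ into the representations (A.14) and (A.19) and then recognising that the rational function $L$ of (A.21) is forced to carry the zeros of $\psi^+(-\,\cdot\,)$ at each $\eta_k$ and of $\hat\psi^-(\,\cdot\,)$ at each $-\vartheta_k$. For (A.25)--(A.26): since $y>b$, the first branch of (A.20) holds just to the right of $b$, so letting $x\uparrow b$ in (A.14) and $x\downarrow b$ in that branch and invoking the continuity of $V_q$ at $b$ (Lemma 2.5) gives $\sum_i J_i=V_q(b-)=V_q(b+)=\sum_i\hat H_i e^{\hat\beta_i(b-y)}+\sum_i\hat P_i$, which is (A.25); differentiating both branches first and again letting $x\to b$, now using $V_q'(b-)=V_q'(b+)$ (Lemma 2.5), gives (A.26).

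For (A.28) I would work on $(b,\infty)$. On $(-\infty,b)$ one has $V_q(b+z)=\sum_i J_i e^{\beta_i z}$, so each inner integral in (A.19) equals $\sum_i J_i\bigl(\tfrac{\vartheta_k}{\vartheta_k+\beta_i}\bigr)^j$; feeding this back, using (A.25) and (A.10) evaluated at $s=\beta_i$, and matching the coefficients of the distinct (since $q\in\mathbb S$) exponentials $e^{\hat\gamma_l(b-x)}$ yields
\[
\hat P_l=\hat P^{*}_l+\hat D_l\sum_i\frac{J_i}{\hat\psi^-(\beta_i)(\beta_i+\hat\gamma_l)},\qquad 1\le l\le\hat N^-.
\]
Inserting this into the middle sum of (A.28), replacing $\hat P^{*}_l$ and $\hat H_i$ by (A.17)--(A.18), and simplifying with $\tfrac1{(\hat\gamma_l-s)(\alpha+\hat\gamma_l)}=\tfrac1{s+\alpha}\bigl(\tfrac1{\hat\gamma_l-s}-\tfrac1{\hat\gamma_l+\alpha}\bigr)$ and $\sum_l\hat D_l/(\hat\gamma_l-s)=\hat\psi^-(-s)$, one finds that $j!$ times the left-hand side of (A.28) is the $j$-th $s$-derivative at $s=\vartheta_k$ of
\[
\hat\psi^-(-s)\left[\sum_i\frac{J_i}{\hat\psi^-(\beta_i)(s+\beta_i)}-\sum_i\frac{\hat C_i e^{\hat\beta_i(b-y)}}{\hat\beta_i(s+\hat\beta_i)}\right].
\]
Since $\hat\psi^-(-s)=\prod_k\bigl(\tfrac{\vartheta_k-s}{\vartheta_k}\bigr)^{n_k}\prod_k\tfrac{\hat\gamma_k}{\hat\gamma_k-s}$ has a zero of order $n_k$ at $s=\vartheta_k$ while the bracket is analytic there (its only poles, at $-\beta_i$ and $-\hat\beta_i$, lie in $\{\mathrm{Re}(s)<0\}$), all those derivatives vanish, which is (A.28).

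For (A.27) I would run the mirror argument on $(-\infty,b)$, using the split-free fact that $V_q(w)=\hat{\mathbb P}_w(Y_{e(q)}>y)+\sum_l\hat P_l e^{\hat\gamma_l(b-w)}$ for every $w>b$, the regime change of $V_q$ at $w=y$ being contained entirely in the already known function $\hat{\mathbb P}_w(Y_{e(q)}>y)$, which equals $\sum_m\hat H_m e^{\hat\beta_m(w-y)}$ on $(b,y)$ and $1+\sum_l\hat Q_l e^{\hat\gamma_l(y-w)}$ on $(y,\infty)$. Plugging this into $V_q(x)=\mathbb E_x[e^{-q\tau_b^+}V_q(X_{\tau_b^+})]$, evaluating the resulting expectations through the overshoot law of Lemma A.2(i) together with (A.9), and reading off the coefficient of $e^{\beta_i(x-b)}$ produces a formula for $J_i$ in terms of the $\hat P_l,\hat H_m,\hat Q_l$; inserting it into $L(s)$ and simplifying as above exhibits $L$ near $s=\eta_k$ as $\psi^+(-s)=\prod_k\bigl(\tfrac{\eta_k-s}{\eta_k}\bigr)^{m_k}\prod_k\tfrac{\beta_k}{\beta_k-s}$ times a factor analytic at $\eta_k$, which gives (A.27).

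The step I expect to be the main obstacle is this last evaluation. Because $\hat{\mathbb P}_\cdot(Y_{e(q)}>y)$ changes form at the level $y$, the overshoot integral splits according to $\{X_{\tau_b^+}<y\}$ and $\{X_{\tau_b^+}\ge y\}$, and the two finite-range integrals produce incomplete-Gamma contributions carrying spurious factors $e^{-\eta_k(y-b)}$; one must check that these cancel. They do, precisely because of the identity
\[
\sum_m\frac{\hat H_m}{s-\hat\beta_m}-\sum_l\frac{\hat Q_l}{s+\hat\gamma_l}-\frac1s=-\frac{\hat\psi^+(-s)\,\hat\psi^-(s)}{s},
\]
whose right-hand side vanishes (through $\hat\psi^+(-s)$) to order $m_k$ at $\eta_k$; this identity is in turn just the Wiener--Hopf factorisation $\hat{\mathbb E}[e^{-sY_{e(q)}}]=\hat\psi^+(s)\hat\psi^-(-s)$ (Lemma 2.6 together with (2.8) and (2.17)) rewritten via the survival function $\hat{\mathbb P}(Y_{e(q)}>u)$. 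A further mild nuisance is that some intermediate transforms (such as $\mathbb E_x[e^{-q\tau_b^+}e^{\hat\beta_m(X_{\tau_b^+}-b)}]$) only make literal sense as the rational functions to which they analytically continue, so the manipulations must be kept formal but consistent throughout rather than probabilistic.
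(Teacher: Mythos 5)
Your proposal is correct and follows essentially the same route as the paper's proof: smooth fit at $b$ for (i); for (ii) and (iii), feeding the first-passage/overshoot laws of Lemma A.2 into the representations (A.14) and (A.19), invoking the Wiener--Hopf identity (A.29) (your version of it being the same identity divided by $\theta$ and negated), and then reading off the forced zeros of $L$ at $\eta_k$ and $-\vartheta_k$ from the factors $\psi^+(-\cdot)$ and $\hat\psi^-(\cdot)$. The only differences are bookkeeping — for (iii) you extract the closed-form $\hat P_l=\hat P^{*}_l+\hat D_l\sum_i J_i/\bigl(\hat\psi^-(\beta_i)(\beta_i+\hat\gamma_l)\bigr)$ by matching coefficients of $e^{\hat\gamma_l(b-x)}$ rather than taking the $\theta$-Laplace transform leading to (A.35)--(A.37), and for (ii) you organise the computation around the decomposition $V_q(w)=\hat{\mathbb P}_w(Y_{e(q)}>y)+\sum_l\hat P_l e^{\hat\gamma_l(b-w)}$ instead of the raw Gamma-kernel integrals (A.31)--(A.33) — but these land on the same identities as the paper's (A.33) and (A.37).
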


\begin{proof}

(i) These results follow from (2.19), (A.14) and (A.20).

(ii) First, as $\sum_{i=1}^{\hat{M}^+}\frac{\hat{C}_i}{\hat{\beta}_i}=1=\sum_{j=1}^{\hat{N}^-}\frac{\hat{D}_j}{\hat{\gamma}_j}$ (let $s=0$ in (A.5) and (A.6)), for some proper $\theta$, we have (see (A.18))
\[
1+\sum_{k=1}^{\hat{N}^-}\frac{\theta \hat{Q}_k}{\theta+\hat{\gamma}_k}=
\sum_{i=1}^{\hat{M}^+}\sum_{j=1}^{\hat{N}^-}\frac{\hat{C}_i\hat{D}_j}{\hat{\beta}_i
\hat{\gamma}_j}
+\sum_{k=1}^{\hat{N}^-}\frac{\theta}{\theta+\hat{\gamma}_k}\left(
\sum_{i=1}^{\hat{M}^+}\frac{\hat{D}_k\hat{C}_i}{\hat{\beta_i}(\hat{\beta}_i+\hat{\gamma}_k)}-\frac{\hat{D}_k}
{\hat{\gamma}_k} \sum_{i=1}^{\hat{M}^+}\frac{\hat{C}_i}{\hat{\beta}_i}\right).
\]
Thus, for all $\theta \in \mathbb C $ except at $\hat{\beta}_1, \ldots, \hat{\beta}_{\hat{M}^+}$ and $-\hat{\gamma}_1, \ldots, -\hat{\gamma}_{\hat{N}^-}$, the last formula and (A.17) will lead to
\begin{equation}
\begin{split}
&\sum_{k=1}^{\hat{M}^+}\frac{\theta \hat{H}_k}{\hat{\beta}_k-\theta}+\sum_{k=1}^{\hat{N}^-}\frac{\theta \hat{Q}_k}{\theta+\hat{\gamma}_k}+1\\
&=\sum_{i=1}^{\hat{M}^+}\sum_{j=1}^{\hat{N}^-}\left\{\frac{\theta \hat{C}_i\hat{D}_j}{\hat{\beta}_i(\hat{\beta}_i+\hat{\gamma}_j)
(\hat{\beta}_i-\theta)}-\frac{\theta\hat{C}_i\hat{D}_j}{\hat{\gamma}_j(\hat{\beta}_i+\hat{\gamma}_j)
(\hat{\gamma}_j+\theta)}+\frac{\hat{C}_i\hat{D}_j}{\hat{\beta}_i\hat{\gamma}_j}\right\}\\
&=\sum_{i=1}^{\hat{M}^+}\sum_{j=1}^{\hat{N}^-}\frac{\hat{C}_i\hat{D}_j}{(\hat{\beta}_i-\theta)
(\theta+\hat{\gamma}_j)}=\hat{\psi}^+(-\theta)\hat{\psi}^-(\theta),
\end{split}\tag{A.29}
\end{equation}
where the last equality follows from (A.5) and (A.6).

Note that
\begin{equation}
\frac{\partial^{j-1}}{\partial \eta^{j-1}}\left(\hat{\psi}^+(-\eta)\right)_{\eta=\eta_k}=0, \ \ for \ \ 1\leq k \leq m^+ \ \ and \ \ 1\leq j\leq m_k.\tag{A.30}
\end{equation}
From (A.29) and (A.30), we obtain
\begin{equation}
\frac{(\eta_k)^j(-1)^{j-1}}{(j-1)!}\frac{\partial^{j-1}}{\partial \eta^{j-1}}\left(
\frac{1}{\eta}e^{\eta(b-y)}\Big(\sum_{i=1}^{\hat{M}^+}\frac{\hat{H}_i\eta}{\hat{\beta}_i-\eta}+
\sum_{i=1}^{\hat{N}^-}\frac{\eta \hat{Q}_i}{\eta+\hat{\gamma}_i}+1\Big)\right)_{\eta=\eta_k}=0.\tag{A.31}
\end{equation}

For $1\leq k \leq m^+$ and $1\leq j \leq m_k$, the integral $(-1)^{j-1}\int_{z_1}^{z_2}z^{j-1}e^{-\eta_k z} e^{ \xi z} dz$ can be understood as
$\frac{\partial^{j-1}}{\partial \eta^{j-1}}\left(\int_{z_1}^{z_2}e^{-\eta z} e^{\xi z} dz\right)_{\eta=\eta_k}$ for some proper constants $z_1$, $z_2$ and $\xi$, then it can be obtained from (A.20) and (A.31) that
\[
\int_{0}^{\infty}\frac{(\eta_k)^jz^{j-1}}{(j-1)!}e^{-\eta_k z}V_q(b+z)dz=
\sum_{i=1}^{\hat{N}^-}\frac{\hat{P}_i(\eta_k)^j}{(\eta_k+\hat{\gamma}_i)^j}+\sum_{i=1}^{\hat{M}^+}\frac{
\hat{H}_i(\eta_k)^j}{(\eta_k-\hat{\beta}_i)^j}e^{\hat{\beta}_i(b-y)},
\]
which combined with (A.14) and the result of $V_q(b)=\sum_{i=1}^{\hat{M}^+}\hat{H}_ie^{\hat{\beta}_i(b-y)}+\sum_{i=1}^{\hat{N}^-}\hat{P}_i$ (see (A.25)), gives
\begin{equation}
\begin{split}
\sum_{k=1}^{M^+}J_{k}e^{\beta_{k}(x-b)}
=
&\sum_{k=1}^{m^+}\sum_{j=1}^{m_k}C_{kj}(b-x)\left(
\sum_{i=1}^{\hat{N}^-}\frac{\hat{P}_i(\eta_k)^j}{(\eta_k+\hat{\gamma}_i)^j}+\sum_{i=1}^{\hat{M}^+}\frac{
\hat{H}_i(\eta_k)^j}{(\eta_k-\hat{\beta}_i)^j}e^{\hat{\beta}_i(b-y)}\right)\\ &+C_0(b-x)\left(\sum_{i=1}^{\hat{M}^+}\hat{H}_ie^{\hat{\beta}_i(b-y)}+\sum_{i=1}^{\hat{N}^-}\hat{P}_i\right),
\ \ for \ \ all \ \ x<b.
\end{split}\tag{A.32}
\end{equation}

It follows from (A.11) and (A.32) that
\begin{equation}
\begin{split}
&\sum_{i=1}^{M^+}\frac{J_i}{\beta_i+\theta}=\int_{-\infty}^{b}e^{\theta(x-b)}\sum_{i=1}^{M^+}J_ie^{\beta_i(x-b)}dx\\
&=\sum_{i=1}^{\hat{M}^+}\frac{\hat{H}_ie^{\hat{\beta}_i(b-y)}}{\theta+\hat{\beta}_i}\left(1-\frac{\psi^+(\theta)}
{\psi^+(-\hat{\beta}_i)}\right)+\sum_{i=1}^{\hat{N}^-}\frac{\hat{P}_i}{\hat{\gamma}_i-\theta}\left(\frac{\psi^+(\theta)}
{\psi^+(\hat{\gamma}_i)}-1\right).
\end{split}\tag{A.33}
\end{equation}
Since $\lim_{\theta \rightarrow -\hat{\beta}_i} \frac{\psi^+(-\hat{\beta}_i)-\psi^+(\theta)}{\theta+\hat{\beta}_i}=-\psi^{+\prime}(-\hat{\beta}_i)$ and $\lim_{\theta \rightarrow \hat{\gamma}_i} \frac{\psi^+(\theta)-\psi^+(\hat{\gamma}_i)}{\theta -\hat{\gamma}_i}=\psi^{+\prime}(\hat{\gamma}_i)$.
In addition, noting that both sides of (A.33) are rational functions\footnote{Here, we omit the first equality in (A.33), i.e., the item $\int_{-\infty}^{b}e^{\theta(x-b)}\sum_{i=1}^{M^+}J_ie^{\beta_i(x-b)}dx$ is omitted.} of $\theta$, we can extend identity (A.33) to the whole plane except at $-\beta_1, \ldots, -\beta_{M^+}$. Then, for given $1 \leq k \leq m^+$ and $0\leq j \leq m_k-1$,  (A.27) is derived  by first taking  a derivative on both sides of (A.33) with respect to $\theta $ up to $j$ order and then letting $\theta$ equal to $-\eta_k$,  where we have used the fact that $\frac{\partial ^{j}}{\partial \theta^{j}}\Big(\psi^+(\theta)\Big)_{\theta=-\eta_k}=0$.

(iii) Similarly, for $1\leq k \leq n^-$ and $1\leq j \leq n_k$, it follows from (A.14) that
\begin{equation}
\int_{-\infty}^{0}V_q(b+z)\frac{(\vartheta_k)^j(-z)^{j-1}}{(j-1)!}e^{\vartheta_k z}dz=\sum_{i=1}^{M^+}\frac{J_i(\vartheta_k)^j}{(\vartheta_k+\beta_i)^j}.\tag{A.34}
\end{equation}
From (A.12), (A.18), (A.19) and (A.34) and the fact of $V_q(b)=\sum_{i=1}^{M^+}J_i$ (see (A.25)), it can be proved that
\begin{equation}
\begin{split}
&\sum_{i=1}^{\hat{N}^-}\frac{\hat{P}_i}{\theta+\hat{\gamma}_i}=\sum_{i=1}^{\hat{N}^-}\hat{P}_i\int_{b}^{\infty}e^{\theta(b-x)}
e^{\hat{\gamma}_i(b-x)}dx\\
&=-\sum_{i=1}^{\hat{M}^+}\frac{\hat{C}_i}{\hat{\beta}_i}e^{\hat{\beta}_i(b-y)}
\sum_{j=1}^{\hat{N}^-}\frac{\hat{D}_j}{\hat{\beta}_i+\hat{\gamma}_j}\frac{1}{\theta+\hat{\gamma}_j}+
\sum_{i=1}^{M^+}\frac{J_i}{\beta_i-\theta}\left(\frac{\hat{\psi}^-(\theta)}{\hat{\psi}^-(\beta_i)}-1\right).
\end{split}\tag{A.35}
\end{equation}
In addition, we note that
\begin{equation}
\begin{split}
&-\sum_{i=1}^{\hat{M}^+}\frac{\hat{C}_i}{\hat{\beta}_i}e^{\hat{\beta}_i(b-y)}
\sum_{j=1}^{\hat{N}^-}\frac{\hat{D}_j}{\hat{\beta}_i+\hat{\gamma}_j}\frac{1}{\theta+\hat{\gamma}_j}\\
&=-\sum_{i=1}^{\hat{M}^+}\frac{\hat{C}_i}{\hat{\beta}_i}e^{\hat{\beta}_i(b-y)}\frac{1}{\hat{\beta}_i-\theta}
\sum_{j=1}^{\hat{N}^-}\hat{D}_j\left(\frac{1}{\theta+\hat{\gamma}_j}-\frac{1}{\hat{\beta}_i+\hat{\gamma}_j}\right)\\
&=\sum_{i=1}^{\hat{M}^+}\frac{\hat{H}_i}{\hat{\beta}_i-\theta}e^{\hat{\beta}_i(b-y)}-\sum_{i=1}^{\hat{M}^+}
\frac{\hat{C}_i\hat{\psi}^-(\theta)}{\hat{\beta}_i(\hat{\beta}_i-\theta)}e^{\hat{\beta}_i(b-y)}.
\end{split}\tag{A.36}
\end{equation}
where the second equality follows from (A.6) and (A.17).

Hence, from the last two formulas, we arrive at
\begin{equation}
\begin{split}
&\sum_{i=1}^{\hat{N}^-}\frac{\hat{P}_i}{\theta+\hat{\gamma}_i}=
\sum_{i=1}^{M^+}\frac{J_i}{\beta_i-\theta}\left(\frac{\hat{\psi}^-(\theta)}{\hat{\psi}^-(\beta_i)}-1\right)\\
&+\sum_{i=1}^{\hat{M}^+}\frac{\hat{H}_i}{\hat{\beta}_i-\theta}e^{\hat{\beta}_i(b-y)}-\sum_{i=1}^{\hat{M}^+}
\frac{\hat{C}_i\hat{\psi}^-(\theta)}{\hat{\beta}_i(\hat{\beta}_i-\theta)}e^{\hat{\beta}_i(b-y)},
\end{split}\tag{A.37}
\end{equation}
which holds for $\theta \in \mathbb C$ except at $-\hat{\gamma}_1, \ldots, -\hat{\gamma}_{\hat{N}^-}$.

For given $1 \leq k \leq n^-$, on both sides of (A.37), we take a derivative with respect to $\theta $ up to $j$ order for $0\leq j \leq n_k-1$ and then let $\theta$ equal to $-\vartheta_k$.
This calculation leads to (A.28) since $\frac{\partial ^{j}}{\partial \theta^{j}}\Big(\hat{\psi}^-(\theta)\Big)_{\theta=-\vartheta_k}=0$, and the proof is completed.
\end{proof}

\end{appendix}

\bigskip


\begin{thebibliography}{100}
%\setlength{\itemsep}{0pt}

\bibitem{Alili} { Alili, L. and Kyprianou, A.E.} (2005). Some remarks on first passage of L\'evy processes, the American put and pasting principle. \emph{Ann. Appl. Probab.} {\bf 15,} 2062--2080.

\bibitem{Asmussen} { Asmussen, S., Avram, F. and Pistorius, M.R.} (2004). Russian and American put options under exponential phase-type L\'evy models. \emph{Stoch. Proc. Appl.} {\bf 109,} 79--111.

\bibitem {Bauer} { Bauer, D., Kling, A. and Russ, J.} (2008). A universal pricing framework for guaranteed minimum benefits in variable annuities. \emph{Astin Bull.} {\bf 38,} 621--651.

\bibitem{Bernard} { Bernard, C., Hardy, M. and MacKay, A.} (2014). State-dependent fees for variable annuity guarantees. \emph{Astin Bull.} {\bf 44,} 559--585.

\bibitem{Cai} {\sc Cai, N.} (2009). On first passage times of a hyper-exponential jump diffusion process. \emph{Oper. Res. Lett.} {\bf 37,} 127--134.

\bibitem{Caietal} { Cai, N., Chen, N. and  Wan, X.} (2009). Pricing double-barrier options under a flexible jump diffusion model. \emph{Oper. Res. Lett.} {\bf 37,} 163--167.

\bibitem{Delong} { Delong, {\L}.} (2014). Pricing and hedging of variable annuities with state-dependent fees.
\emph{Insurance Math. Econom.} {\bf 58,}  24--33.

\bibitem{Feller} { Feller, W.} (1996). \emph{An Introduction to Probability Theory and its Applications}, Volume II, $2$nd Edition, John Wiley, New York.

\bibitem {Gerber} { Gerber, H.U. and Shiu, E.S.W.}  (1994). Option pricing by Esscher transforms. \emph{Trans. Soc. Actuaries} {\bf 46,} 99--140.

\bibitem{Ko} { Ko, B., Shiu, E.S.W. and Wei, L.} (2010). Pricing maturity guarantee with dynamic withdrawal benefit. \emph{Insurance Math. Econom.} {\bf 47,} 216--223.

\bibitem{Kuznetsov} { Kuznetsov, A.} (2012). On the distribution of exponential functionals for L\'evy processes with jumps of rational transform. \emph{Stoch. Proc. Appl.} {\bf 122,} 654--663.

\bibitem{Kyprianou2006} { Kyprianou, A.E.} (2006). \emph{Introductory Lectures on Fluctuations of L\'evy Processes with Applications}, Universitext, Springer-Verlag, Berlin.

\bibitem{Kyprianou2010} { Kyprianou, A.E. and  Loeffen, R.L.} (2010). Refracted L\'evy processes. \emph{Ann. Inst. H. Poincare Probab. Statist.} {\bf 46,} 24--44.

\bibitem{Kyprianou2014} { Kyprianou, A.E, Pardo, J.C. and P\'erez, J.L.} (2014). Occupation times of refracted L\'evy processes. \emph{J. Theoret. Probab.} {\bf 27,} 1292--1315.

\bibitem {Lee2003} { Lee, H.} (2003). Pricing equity-indexed annuities with path-dependent options.
\emph{Insurance Math. Econom.} {\bf 33,} 677--690.

\bibitem{Lewis2008} { Lewis, A.L. and  Mordecki, E.} (2008). Wiener-Hopf factorization for L\'evy processes having positive jumps with rational transforms. \emph{J. Appl. Prob.} {\bf 45,} 118--134.

\bibitem{Mackay2015}{ MacKay, A., Augustyniak, M., Bernard, C. and Hardy, M.R.} (2015). Risk management of policyholder behavior in equity-linked life insurance. \emph{J. Risk Insur.} , doi:10.1111/jori.12094.

\bibitem {Ng} { Ng, A.C.Y. and  Li, J.S.H.} (2011). Valuing variable annuity guarantees with the multivariate Esscher transform. \emph{Insurance Math. Econom.} {\bf 49,} 393--400

\bibitem {Pistorius} { Pistorius, M.R.} (2006). On maxima and ladder processes for a dense class of L\'evy processes. \emph{J. Appl. Prob.} {\bf 43,} 208--220.

\bibitem {Renaud} {Renaud, J.F.} (2014). On the time spent in the red by a refracted L\'evy risk process.
 \emph{J. Appl. Prob.} {\bf 51,} 1171--1188.

\bibitem{Stiu} { Stiu, R.} (2005). \emph{Theory of Stochastic Differential Equations with Jumps and Application}, Springer, New York, 2005.% MR2160585.

\bibitem{Wu} { Wu, L. and  Zhou, J.} (2015). Occupation times of refracted Levy processes with jumps having rational Laplace transform.  arXiv:1501.03363v1[math.PR]

\bibitem{Zhoua} { Zhou, J. and  Wu, L.} (2015). Occupation times of refracted double exponential jump diffusion processes. \emph{Stat. Prob. Lett.} {\bf 106,} 218--227.

\bibitem{Zhoub} { Zhou, J. and Wu, L.} (2015). Valuing equity-linked death benefits with a threshold expense strategy. \emph{Insurance Math. Econom.} {\bf 62,} 79--90.

\bibitem{Zhouc} { Zhou, J. and Wu, L.} (2015). The time of deducting fees for variable annuities under the state-dependent fee structure. \emph{Insurance Math. Econom.} {\bf 61,} 125--134.

 \end{thebibliography}
\end{document}